\newtheorem{theorem}{Theorem}[section]
\newtheorem{lemma}[theorem]{Lemma}
\newtheorem{proposition}[theorem]{Proposition}
\newtheorem{corollary}[theorem]{Corollary}
\theoremstyle{definition}
\newtheorem{definition}[theorem]{Definition}
\newtheorem{example}[theorem]{Example}
\newtheorem{question}[theorem]{Question}
\newtheorem{conjecture}[theorem]{Conjecture}
\newtheorem{remark}[theorem]{Remark}
\newcommand{\q}{{\bold q}}
\newcommand{\Tr}{\text{Tr}}
\newcommand{\Hom}{\text{Hom}}
\newcommand{\Id}{\text{Id}}
\newcommand{\Aut}{\text{Aut}}
\newcommand{\Ind}{\text{Ind}}
\newcommand{\Rep}{\text{Rep}}
\newcommand{\g}{\mathfrak{g}}
\newcommand{\kk}{{\bold k}}
\newcommand{\C}{\mathcal{C}}
\newcommand{\ot}{\otimes}
\newcommand{\ben}{\begin{enumerate}}
\newcommand{\een}{\end{enumerate}}
\newcommand{\be}{{\bf 1}}
\theoremstyle{plain}
\newtheorem*{sol}{Solution}
\theoremstyle{definition}
\theoremstyle{remark}
\newcommand{\solu}[1]{\begin{sol}{\bf (\ref{#1})}}
\def\g{\mathfrak{g}}
\def\C{\mathcal{C}}
\def\D{\mathcal{D}}
\def\Aut{\mathop{\mathrm{Aut}}\nolimits}
\def\N{\mathcal{N}}
\def\Hom{\mathrm{Hom}}
\def\Vec{\mathrm{Vec}}
\def\Ver{\mathrm{Ver}}
\def\k{\mathbf{k}}
\def\Rep{\mathop{\mathrm{Rep}}\nolimits}
\begin{document}

\title{On semisimplification of tensor categories}

\author{Pavel Etingof}
\address{Department of Mathematics, Massachusetts Institute of Technology,
Cambridge, MA 02139, USA}
\email{etingof@math.mit.edu}
\author{Victor Ostrik}
\address{Department of Mathematics,
University of Oregon, Eugene, OR 97403, USA}
\address{Laboratory of Algebraic Geometry,
National Research University Higher School of Economics, Moscow, Russia}
\email{vostrik@uoregon.edu}

\maketitle 

\begin{abstract} We develop the theory of semisimplifications of tensor categories defined by Barrett and Westbury. In particular, we compute the semisimplification of the category of representations of a finite group in characteristic $p$ in terms of representations of the normalizer of its Sylow 
$p$-subgroup. This allows us to compute the semisimplification of the representation category of the symmetric group $S_{n+p}$ in characteristic $p$, where $0\le n\le p-1$, and of the Deligne category $\underline{\Rep}^{\rm ab}S_t$, where $t\in \Bbb N$. We also compute the semisimplification of the category of representations of the Kac-De Concini quantum group of the Borel subalgebra of $\mathfrak{sl}_2$. We also study tensor functors between Verlinde categories of semisimple algebraic groups arising from the semisimplification construction, and objects of finite type in categories of modular representations of finite groups (i.e., objects generating a fusion category in the semisimplification). Finally, we determine the semisimplifications of the tilting categories of $GL(n)$, $SL(n)$ and $PGL(n)$ in characteristic $2$. In the appendix, we classify categorifications of the Grothendieck ring of representations of $SO(3)$ and its truncations. 
\end{abstract} 

\vskip .05in

\centerline{\bf To Sasha Beilinson and Vitya Ginzburg}
\centerline{\bf on their 60th birthdays
with admiration} 

\section{Introduction} 

The notion of the {\it semisimplification} of a spherical tensor category was introduced in \cite{BW}, 
although in the context of algebraic geometry it can be traced back to the notion
of numerical equivalence of cycles in the theory of motives, see e.g. \cite{Ja}. More generally,
various {\em adequate equivalence relations} in the same theory can be considered
as examples of {\em tensor ideals} in the symmetric tensor category of Chow motives.

Recall that a morphism $f:X\to Y$ in a spherical tensor category $\C$ over a field $\bold k$ 
is called {\it negligible} if for any morphism $g: Y\to X$, one has $\Tr(f\circ g)=0$. One can show that the collection ${\mathcal N}$ of negligible morphisms is a tensor ideal, thus one can define an additive monoidal category $\overline{\C}:=\C/{\mathcal N}$. One can show that $\overline{\C}$ is, in fact, semisimple abelian, with simple objects being the indecomposable objects of $\C$ of nonzero dimension, and it is called the {\it semisimplification} of $\C$. Moreover, this definition can be generalized to pivotal categories in which the left and right dimension of indecomposables vanish simultaneously, and even to Karoubian (not necessarily abelian) monoidal categories in which the trace of a nilpotent endomorphism is zero. 

The semisimplification construction is a rich source of semisimple tensor categories. In the simplest cases, when the classification of indecomposables in $\mathcal{C}$ is tame, the semisimplification can be described explicitly. Admittedly, this happens rather rarely: most of the time the classification of indecomposables is wild, and the corresponding semisimplified category $\overline{\C}$ is somewhat unmanageable, i.e., may have uncountably many simple objects even if $\C$ is finite (e.g., this happens already for $\C=\Rep_{\bold k}((\Bbb Z/p)^2)$, where $\bold k$ is an uncountable field with ${\rm char}(\bold k)=p>2$). However, in this case we may consider the tensor subcategory of $\mathcal{\C}$ generated by a given object $X$, which is  much more manageable (in particular, always has a finite or countable set of isomorphism classes of simple objects); in particular, it is an interesting question when this subcategory is fusion (i.e., has finitely many simple objects), and what it looks like in this case. 

The goal of this paper is to develop a number of tools for studying semisimplifications of tensor categories, and to apply them 
to compute the semisimplifications and their tensor subcategories generated by particular objects in a number of specific examples. 

Specifically, in Section 2 we review the basic theory of tensor ideals and semisimplifications. 

In Section 3, we give some general results about semisimplications. In particular, we discuss semisimplifications of Tannakian categories in characteristic zero, reductive envelopes of algebraic groups and the generalized Jacobson-Morozov Lemma (following Andr\'e and Kahn), compatibility of semisimplification with equivariantization and with surjective tensor functors. 

In Section 4, we use classical results of modular representation theory (the Green correspondence) to show that the semisimplification of the category $\Rep G$ of representations of a finite group $G$ in characteristic $p>0$ is naturally equivalent to that of the normalizer of its $p$-Sylow subgroup, and compute the semisimplification of $\Rep G$ when the Sylow subgroup is cyclic of order $p$ (in particular for $G=S_{n+p}$ with $0\le n<p$). We then use this result and the work of Harman  
to compute the semisimplification of the abelian envelope of the Deligne category $\underline{\Rep}^{\rm ab}(S_n)$. 

In Section 5 we compute the semisimplifications of some non-symmetric categories in characteristic zero, namely, the category of representations of the Kac-De Concini quantum group $U_q(\mathfrak{b})$, where $\mathfrak{b}$ is the Borel subalgebra  
of $\mathfrak{sl}_2$ when $q$ is generic and when $q$ is a root of unity. 

In Section 6, we study surjective tensor functors between Verlinde categories attached to simple algebraic groups in characteristic $p$; interesting examples of such functors, which are attached to pairs of simple algebraic groups $G\supset K$ where $K$ contains a regular unipotent element of $G$, are obtained from the semisimplification construction.  

In Section 7, we study objects of finite type in semisimplications of categories of group representations in characteristic $p$, i.e., objects generating fusion subcategories. We give a number of nontrivial examples of objects of finite type, and study the fusion categories they generate. 

In Section 8 we determine the semisimplifications of the tilting categories of $GL(n)$, $SL(n)$ and $PGL(n)$ in characteristic $2$.  

Finally, in the appendix we classify categorifications of the representation ring and Verlinde ring for $SO(3)$. This is used in Section 5. 

{\bf Acknowledgements.} The authors are grateful to D. Benson, I. Entova-Aizenbud, T. Heidersdorf, A. Kleshchev, D. Nakano, V. Serganova, and N. Snyder for useful discussions. The work of P.E. and V.O. was partially supported by the NSF grants DMS-1502244 and DMS-1702251.
The work of V.O. has also been funded by the Russian Academic
Excellence Project '5-100'.

\section{Preliminaries}

\subsection{Tensor ideals} Let $\kk$ be a field and
let $\C$ be a $\kk-$linear monoidal category. Recall that
a {\em tensor ideal} $I$ in $\C$ is a collection of subspaces $I(X,Y)\subset \Hom(X,Y)$ 
for all $X,Y\in \C$  such that for all $X,Y,Z,T \in \C$

(1) for $\alpha \in I(X,Y)$ and $\beta \in \Hom(Y,Z), \gamma \in \Hom(Z,X)$ we have
$\alpha \circ \gamma \in I(Z,Y)$ and $\beta \circ \alpha \in I(X,Z)$;

(2) for $\alpha \in I(X,Y), \beta \in \Hom(Z,T)$ we have $\alpha \ot \beta \in
I(X\ot Z, Y\ot T)$ and $\beta \ot \alpha \in I(Z\ot X, T\ot Y)$.

If $I$ is a tensor ideal in $\C$ then one can define a new $\kk-$linear monoidal category 
$\C'$ (the {\em quotient} of $\C$ by $I$) as follows: 
the objects of $\C'$ are the objects of $\C$; $\Hom_{\C'}(X,Y):=\Hom_\C(X,Y)/I(X,Y)$; the
composition of morphisms is the same as in $\C$ (note that condition (1) ensures that the
composition is well defined); the tensor product is the same as in $\C$ (well defined thanks to
condition (2)).

Moreover, the identity map on the objects and morphisms induces a canonical quotient monoidal functor
$\C \to \C'$. 

It is clear that if $\C$ is rigid, pivotal, spherical, braided, symmetric then so is $\C'$.

\subsection{Semisimplification of  a spherical tensor category} 

We recall the theory of semisimplifications of spherical tensor categories, due to Barrett and Westbury, \cite{BW}. We give proofs for reader's convenience. 

Let $\k$ be an algebraically closed field, and $\C$ be a spherical tensor category over $\k$ (see \cite{EGNO}, Subsection 4.7). 

\begin{definition} A morphism $f: X\to Y$ in $\C$ is called {\it negligible} if for any morphism $g: Y\to X$ one has $\Tr(f\circ g)=0$. 
\end{definition} 

\begin{lemma}\label{charneg} (\cite{B2}, Exercise 3(ii), Subsection 2.18) Let $X=\oplus_i X_i$ and $Y=\oplus_j Y_j$ be decompositions of $X,Y$ into indecomposable objects, 
and $f=\oplus_{i,j}f_{ij}$ be a morphism $X\to Y$, where $f_{ij}: X_i\to Y_j$. Then $f$ is negligible if and only if
for each $i,j$ either $\dim Y_j=0$ or $f_{ij}$ is not an isomorphism (equivalently, either $\dim X_i=0$ or $f_{ij}$ is not an isomorphism). 
\end{lemma}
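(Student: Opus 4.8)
The plan is to reduce the general statement to the case where $X$ and $Y$ are themselves indecomposable, and then analyze that case using the fact that in a Krull–Schmidt category the endomorphism ring of an indecomposable object is local. First I would observe that negligibility is additive in the following sense: if $f = \oplus_{i,j} f_{ij}$ and $g: Y \to X$ is arbitrary, then writing $g = \oplus_{j,i} g_{ji}$ with $g_{ji}: Y_j \to X_i$, we have $\Tr(f \circ g) = \sum_{i,j} \Tr(f_{ij} \circ g_{ji})$, where each summand is computed inside $\End(X_i)$ (or equivalently $\End(Y_j)$, by the trace property of a spherical category). Hence $f$ is negligible if and only if each component $f_{ij}: X_i \to Y_j$ is negligible as a morphism between indecomposables. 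This splits the proof into the single-block case.

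So it remains to prove: for indecomposable $X_i$ and $Y_j$, the morphism $f_{ij}$ is negligible if and only if $\dim Y_j = 0$ or $f_{ij}$ is not an isomorphism. For the "if" direction, if $\dim Y_j = 0$ then for any $g: Y_j \to X_i$ the composite $f_{ij} \circ g \in \End(Y_j)$; since $Y_j$ is indecomposable its endomorphism ring is local, so $f_{ij}\circ g$ is either invertible or nilpotent, and an invertible endomorphism of $Y_j$ would have trace equal to $\dim Y_j = 0$ anyway (here I'd use that $\Tr$ of an invertible endomorphism of an indecomposable equals $\dim Y_j$ only up to the scalar $g$ contributes — more carefully, if $f_{ij}\circ g$ is invertible, then $X_i \cong Y_j$ via $f_{ij}$, reducing to the case $f_{ij}$ an isomorphism, which then forces $\Tr(f_{ij}\circ g) = \dim Y_j = 0$; if $f_{ij}\circ g$ is nilpotent, its trace vanishes since trace is a sum over a composition series and each graded piece contributes $0$). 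If instead $f_{ij}$ is not an isomorphism, then since $\End(X_i)$ and $\End(Y_j)$ are local, for any $g$ the composite $f_{ij}\circ g$ lies in the maximal ideal of $\End(Y_j)$, hence is nilpotent, hence has trace zero.

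For the "only if" direction, suppose $f_{ij}$ is an isomorphism and $\dim Y_j \neq 0$. Take $g = f_{ij}^{-1}$; then $f_{ij}\circ g = \id_{Y_j}$, and $\Tr(\id_{Y_j}) = \dim Y_j \neq 0$, so $f_{ij}$ is not negligible. The parenthetical equivalence in the statement (that one may replace the condition $\dim Y_j = 0$ by $\dim X_i = 0$) follows because if $f_{ij}$ \emph{is} an isomorphism then $X_i \cong Y_j$ so $\dim X_i = \dim Y_j$, and if $f_{ij}$ is not an isomorphism then both versions of the condition are vacuously irrelevant — in either case the two conditions cut out the same set of negligible $f_{ij}$.

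The main obstacle I anticipate is the bookkeeping for the trace of a nilpotent (or more generally non-invertible) endomorphism of an indecomposable object: one needs that $\Tr$ is additive on short exact sequences (or behaves well with respect to a filtration), which in the abelian spherical setting is standard, but in the merely Karoubian/pivotal setting alluded to in the introduction requires the hypothesis that the trace of a nilpotent endomorphism vanishes. I would state the argument in the abelian case and remark that the general case is handled identically using that hypothesis. A secondary point of care is making sure all traces are taken consistently — using the spherical structure to pass freely between $\Tr$ computed in $\End(X_i)$ and in $\End(Y_j)$ — but this is routine given that $\C$ is spherical.
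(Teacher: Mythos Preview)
Your proposal is correct and follows essentially the same approach as the paper: reduce to the case of indecomposable $X_i, Y_j$ via the trace sum $\Tr(f\circ g)=\sum_{i,j}\Tr(f_{ij}\circ g_{ji})$, then use that any endomorphism of an indecomposable is $\lambda\,{\rm Id}+{\rm nilpotent}$ so its trace is $\lambda\dim Y_j$. One small cleanup: in the case $\dim Y_j=0$ you do not need the invertible/nilpotent dichotomy at all (and the line ``$\Tr(f_{ij}\circ g)=\dim Y_j$'' is imprecise---it equals $\lambda\dim Y_j$), since the scalar-plus-nilpotent decomposition already gives $\Tr=\lambda\dim Y_j=0$ directly, exactly as the paper does.
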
 

\begin{proof} First let us prove the lemma when $X,Y$ are indecomposable. If $f: X\to Y$ is not an isomorphism, then 
for any $g: Y\to X$, the morphism $f\circ g: Y\to Y$ 
is not an isomorphism, either; otherwise $f$ is injective (hence not surjective) and $X\cong {\rm Im}f\oplus {\rm Ker}g$, 
with both summands nonzero, giving a contradiction. Hence, $f\circ g$ is nilpotent and $\Tr(f\circ g)=0$. Also, if $f$ is an isomorphism (so $\dim X=\dim Y$)  then for any $g: Y\to X$, one has $f\circ g=\lambda{\rm Id}+h$, where $\lambda\in \bold k$ and $h: Y\to Y$ is nilpotent. Hence $\Tr(f\circ g)=\lambda \dim Y=\lambda \dim X$. If $\dim X=\dim Y=0$, this is always zero, while if $\dim X=\dim Y\ne 0$ then we can take $g=f^{-1}$ (so that $\lambda=1$), and 
$\Tr(f\circ g)=\dim Y\ne 0$, as desired. 
  
Now consider the general case. Suppose the condition of the lemma is satisfied, and $g: Y\to X$ is a morphism, $g=(g_{ji})$. Then 
$\Tr(f\circ g)=\sum_{i,j}\Tr(f_{ij}\circ g_{ji})$. If  either $\dim Y_j=0$ or $f_{ij}$ is not an isomorphism (equivalently, either $\dim X_i=0$ or $f_{ij}$ is not an isomorphism) for all $i,j$ then by the indecomposable case, $\Tr(f_{ij}\circ g_{ji})=0$ for all $i,j$, hence $\Tr(f\circ g)=0$. 
However, if for some $i,j$ this condition is violated, then we can take $g_{ji}=f_{ij}^{-1}$ and $g_{pq}=0$ for $(p,q)\ne (i,j)$, 
so that $\Tr(f\circ g)=\dim X_i=\dim Y_j$. This implies the lemma. 
\end{proof} 

Let $\N(\C)$ be the collection of negligible morphisms of $\C$. 

\begin{lemma}\label{tensid} 
$\N(\C)$ is a tensor ideal in $\C$. 
\end{lemma}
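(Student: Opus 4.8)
The plan is to verify directly the two defining conditions of a tensor ideal for $\N(\C)$, namely closure under composition with arbitrary morphisms on either side, and closure under tensoring with arbitrary morphisms on either side. Since $\N(\C)$ is visibly a collection of subspaces $\N(X,Y)\subset\Hom(X,Y)$ (negligibility is preserved under addition and scalar multiplication, as the trace pairing is bilinear), the only content is the two stability properties.

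For condition (1), let $f\in\N(X,Y)$. Given $\beta\in\Hom(Y,Z)$, I must show $\beta\circ f\in\N(X,Z)$: for any $g\in\Hom(Z,X)$, I have $\Tr((\beta\circ f)\circ g)=\Tr(\beta\circ(f\circ g))=\Tr((f\circ g)\circ\beta)=\Tr(f\circ(g\circ\beta))$ by cyclicity of the trace in a spherical category, and this is $0$ since $g\circ\beta\in\Hom(Y,X)$ and $f$ is negligible. Symmetrically, given $\gamma\in\Hom(Z,X)$, for $f\circ\gamma\in\N(Z,Y)$ I take any $g\in\Hom(Y,Z)$ and compute $\Tr((f\circ\gamma)\circ g)=\Tr(f\circ(\gamma\circ g))=0$ since $\gamma\circ g\in\Hom(Y,X)$. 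So condition (1) holds, using only cyclicity of the spherical trace.

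For condition (2), let $f\in\N(X,Y)$ and $\beta\in\Hom(Z,T)$; I must show $f\ot\beta\in\N(X\ot Z,Y\ot T)$ (the case $\beta\ot f$ being symmetric). Take any $g\in\Hom(Y\ot T,X\ot Z)$; the key computation is to express $\Tr((f\ot\beta)\circ g)$ as $\Tr(f\circ \widetilde g)$ for a suitable $\widetilde g\in\Hom(Y,X)$, whence it vanishes by negligibility of $f$. Concretely, $\widetilde g$ is obtained by the partial trace over the second tensor factor: using the coevaluation and evaluation for $T$ (resp.\ $Z$), one forms $\widetilde g := (\id_X\ot\ev_T)\circ(\cdots)\circ(\id_Y\ot\coev_Z)$ built from $g$ and $\beta$, and a standard graphical-calculus argument in a spherical category shows $\Tr_{X\ot Z\to Y\ot T}\big((f\ot\beta)\circ g\big)=\Tr_{X\to Y}\big(f\circ\widetilde g\big)$ — intuitively, the closed loop carrying $f$ and $\beta$ can be slid so that the $\beta$-strand, together with the relevant part of $g$, is reabsorbed into a single endomorphism threading only the $X$/$Y$ objects. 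Since $f$ is negligible this is $0$, so $f\ot\beta\in\N(X\ot Z,Y\ot T)$; the case of $\beta\ot f$ is handled identically with the roles of the two tensor factors exchanged, again using sphericality to ensure left and right partial traces agree.

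The main obstacle is condition (2): making the partial-trace identity rigorous requires a careful manipulation with the (co)evaluation morphisms and the pivotal structure — one must check that the partial trace is well defined and that sphericality (equality of left and right traces) is exactly what lets the loop be closed consistently on either factor. Condition (1) is routine given cyclicity of the trace. I would present condition (2) via the string-diagram picture and reduce it to the identity $\Tr(f\ot\beta)\circ g)=\Tr(f\circ\widetilde g)$, which is the heart of the matter.
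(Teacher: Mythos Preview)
Your proposal is correct and follows essentially the same approach as the paper: both reduce condition (2) to the identity $\Tr((f\otimes\beta)\circ g)=\Tr(f\circ\widetilde g)$ with $\widetilde g$ a partial trace absorbing $\beta$ and $g$, and treat condition (1) as routine. The paper is slightly more explicit about the partial trace, writing $g':=\Tr_T((a\otimes{\rm Id})\circ g)$, whereas you leave the formula for $\widetilde g$ schematic, but the argument is the same.
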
 

\begin{proof} It is clear that a linear combination of negligible morphisms is negligible. Also, it is easy to see that 
$f\circ a, b\circ f$ are negligible for any $a,b$ (when these compositions make sense). It remains to show that 
the tensor products $a\otimes f$ and $f\otimes b$ are negligible. Let us prove this for $a\otimes f$, where $a: Z\to T$; the case of $f\otimes b$ is similar. Let $g: T\otimes Y\to Z\otimes X$. Then $\Tr((a\otimes f)\circ g)=\Tr(f\circ g')$, where $g':=\Tr_T((a\otimes {\rm Id})\circ g)$. 
Hence  $\Tr((a\otimes f)\circ g)=0$ and $a\otimes f$ is negligible, as desired. 
\end{proof}

Thus we can define a spherical tensor category $\overline{\C}:=\C/\N(\C)$. 

\begin{proposition}\label{semisi} The category $\overline{\C}$ is a semisimple tensor category. The simple objects of 
$\overline{\C}$ are the indecomposable objects of $\C$ of nonzero dimension. 
\end{proposition}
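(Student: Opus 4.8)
The plan is to establish the two assertions in sequence: first that the indecomposables of nonzero dimension become simple (and pairwise non-isomorphic) in $\overline{\C}$, and that the indecomposables of dimension zero become zero objects; and then that $\overline{\C}$ is semisimple, i.e. every object is a direct sum of these simples. Throughout I would rely on the Krull–Schmidt property of $\C$ (which holds since $\C$ is a tensor category, so Hom-spaces are finite-dimensional and idempotents split) and on Lemma \ref{charneg}, which gives an explicit description of $\N(\C)$ in terms of a decomposition into indecomposables.

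First I would show that if $X$ is indecomposable with $\dim X = 0$, then $X \cong 0$ in $\overline{\C}$. Indeed, by Lemma \ref{charneg} the identity $\id_X$ is negligible (take $i=j$: $\dim X = 0$), so $\id_X = 0$ in $\overline{\C}$, which forces $X$ to be a zero object there. Next, for indecomposable $X$ with $\dim X \ne 0$, I would compute $\End_{\overline{\C}}(X) = \End_\C(X)/\N(\C)(X,X)$. Since $\C$ is Krull–Schmidt, $\End_\C(X)$ is a local ring, the sum of $\k\cdot\id_X$ and its Jacobson radical (the non-isomorphisms); by Lemma \ref{charneg}, $\N(\C)(X,X)$ is exactly this radical. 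Hence $\End_{\overline{\C}}(X) = \k$, so $X$ is simple in $\overline{\C}$. Similarly, for $X,Y$ indecomposable of nonzero dimension with $X\not\cong Y$, every morphism $X\to Y$ is a non-isomorphism, hence negligible, so $\Hom_{\overline{\C}}(X,Y)=0$; this shows the simple objects are pairwise non-isomorphic. Conversely, any simple object of $\overline{\C}$ is the image of some object of $\C$, which by Krull–Schmidt is a sum of indecomposables, and killing the zero-dimensional summands we see it must be (the image of) an indecomposable of nonzero dimension. This identifies the simples precisely.

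It remains to prove semisimplicity, i.e. that every object $Z \in \overline{\C}$ is a direct sum of simples and that $\overline{\C}$ is abelian. Writing $Z = \bigoplus_i X_i$ in $\C$ with $X_i$ indecomposable, and discarding those $X_i$ with $\dim X_i = 0$ (which are zero in $\overline{\C}$ by the first step), $Z$ becomes a finite direct sum of simple objects in $\overline{\C}$. The additive category $\overline{\C}$ is therefore the category of finite-dimensional modules (finitely generated, all $\Hom$-spaces finite-dimensional) over the "infinite matrix algebra" on these simples; such a category is automatically abelian and semisimple — kernels and cokernels of maps between semisimple objects exist and are again semisimple, by Schur's lemma ($\Hom$ between distinct simples vanishes, $\End$ of a simple is $\k$), so every short exact sequence splits. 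Since $\C \to \overline{\C}$ is a spherical tensor functor (as noted in Subsection 2.1, the quotient of a spherical category by a tensor ideal is spherical) and $\overline{\C}$ is now known to be semisimple abelian with $\End(\be) = \k$, it is a semisimple tensor category in the required sense.

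The main obstacle, and the step deserving the most care, is the identification $\N(\C)(X,X) = \mathrm{rad}\,\End_\C(X)$ for indecomposable $X$ of nonzero dimension, and the resulting conclusion $\End_{\overline{\C}}(X)=\k$: this is where Lemma \ref{charneg} and the Krull–Schmidt/local-ring structure of $\C$ are combined, and where one must be slightly careful that negligibility of a morphism (a condition involving traces of compositions with all $g$) matches non-invertibility exactly. Everything after that — the description of general objects as direct sums and the verification that a category of semisimple objects is abelian and semisimple — is formal.
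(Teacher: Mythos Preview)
Your proposal is correct and follows essentially the same approach as the paper: both arguments use Lemma \ref{charneg} to compute that $\Hom_{\overline{\C}}(X,Y)$ is $\k$ when $X\cong Y$ are indecomposable of nonzero dimension and vanishes otherwise, and deduce semisimplicity from this. You have simply spelled out in more detail (via the local-ring structure of $\End_\C(X)$ and the resulting abelian/semisimple structure) what the paper compresses into the sentence ``This implies the proposition.''
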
 

\begin{proof} It is clear that indecomposable objects of $\overline{\C}$ are images of indecomposable objects of $\C$. More precisely, if $X,Y\in \C$ are indecomposable then by Lemma \ref{charneg}, $\Hom_{\overline{\C}}(X,Y)=0$ if $X\ncong Y$ or $\dim X=0$ or $\dim Y=0$ (i.e., if $\dim X=0$, then $X=0$ in $\overline{\C}$), and $\dim \Hom_{\overline{\C}}(X,Y)=1$ if $X\cong Y$ and $\dim X\ne 0$. This implies the proposition. 
\end{proof} 

\begin{definition} The category $\overline{\C}$ is called the {\it semisimplification} of $\C$. 
\end{definition} 

Note that the category $\overline{\C}$ comes equipped with a natural monoidal functor ${\bold S}: \C\to \overline{\C}$, which we call the {\it semisimplification functor}. This functor, however, is not a tensor functor, since it is not left or right exact, in general. We will denote the image ${\bold S}(X)$ of an object $X$ under this functor by $\overline{X}$. 

\subsection{Generalization to pivotal Karoubian categories} 

The above results generalize to pivotal tensor categories (\cite{EGNO}, Subsection 4.7)
such that $\dim^L X=0$ if and only if $\dim^R X=0$ for 
any indecomposable object $X\in \C$ (an example of such a category which is not spherical is the category of representations of the Taft Hopf algebra). Namely, in such a category, for any endomorphism $h: X\to X$ of an indecomposable object $X$, 
one has $\Tr^L(h)=0$ if and only if $\Tr^R(h)=0$. Thus, if $f: X\to Y$ is a morphism between arbitrary objects of $\C$, then the condition that for any $g:Y\to X$, one has $\Tr^L(f\circ g)=0$ is equivalent to the condition that for any $g:Y\to X$, one has $\Tr^R(f\circ g)=0$. 
One then defines $f$ to be negligible if any of these two equivalent conditions is satisfied. Then  Lemma \ref{charneg}, Lemma \ref{tensid} and Proposition \ref{semisi} generalize verbatim, with analogous proofs. 

Moreover, the above results also extend to the case when $\C$ is a Karoubian rigid monoidal category in which the trace of a nilpotent endomorphism is zero, a necessary condition for $\C$ to be embeddable into an abelian tensor category.\footnote{Note that this condition is not necessarily satisfied: e.g. if ${\rm char}(\k)=p$, $t\in \k$, and $\underline{\Rep}_\k(S_t)$ is the Karoubian Deligne category of representations of $S_t$ (\cite{EGNO}, Subsection 9.12) then this property holds only if $t\in \Bbb F_p\subset \k$; namely, if $\sigma$ is the cyclic permutation on $X^{\otimes p}$, where $X$ is the tautological object, then $(1-\sigma)^p=0$ but 
$\Tr(1-\sigma)=t^p-t$.} For instance, the well-known construction of the fusion categories attached to a simple Lie algebra $\g$ (in characteristic zero or $p$ bigger than the Coxeter number), \cite{EGNO}, Subsection 8.18.2, starts with the category of tilting modules for the corresponding (quantum) group (which is Karoubian), and takes a quotient by the tensor ideal of negligible morphisms. Note that in this special case negligible morphisms happen to be those that factor through negligible objects (i.e., direct sums of simple objects of dimension $0$); this is not the case in general (e.g., for $\Rep_\k(\Bbb Z/p)$). 

To summarize, we have the following result. Let $\C$ be a pivotal category, let $\dim^L(X):=\Tr^L({\rm Id}_X)$, 
$\dim^R(X):=\Tr^R({\rm Id}_X)$ for $X\in \C$, and call a morphism 
$f: X\to Y$ negligible if for any $g: Y\to X$ one has $\Tr^L(f\circ g)=0$. 

\begin{theorem}\label{karpiv} Let $\C$ be a $\kk-$linear Karoubian rigid monoidal category such that all morphisms
spaces are finite dimensional.\footnote{Note that any Karoubian linear category with finite dimensional morphism spaces satisfies the Krull-Schmidt theorem, which says that any object has a unique decomposition into a direct sum of indecomposables (up to a non-unique isomorphism); for this reason, such categories are  sometimes called Krull-Schmidt categories.} Assume that $\C$ is equipped with a pivotal structure such that 

(1) the left trace $\Tr^L$ of any nilpotent endomorphism is zero; 

(2) $\dim^L X=0$ if and only if $\dim^R X=0$ for an indecomposable $X\in \C$.

Then negligible morphisms are characterized as in Lemma \ref{charneg} and 
form a tensor ideal ${\mathcal N}(\C)$. Moreover, 
$\C/\N(\C)$ is a semisimple tensor category, whose simple objects are the indecomposable objects 
of $\C$ of nonzero dimension. 
\end{theorem}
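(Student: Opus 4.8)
The plan is to rerun, almost verbatim, the three proofs given above in the spherical case --- Lemma \ref{charneg}, Lemma \ref{tensid}, Proposition \ref{semisi} --- replacing the two ingredients of those proofs that are not available here: the abelian/exactness arguments (images and kernels), which I will replace by the splitting of idempotents in the Karoubian category, and the coincidence $\dim^L=\dim^R$ together with ``$\Tr$ of a nilpotent is $0$'', which I will replace by hypotheses (1) and (2). As preliminaries I would record that, $\C$ being Karoubian with finite-dimensional morphism spaces, the Krull--Schmidt theorem holds (cf.\ the footnote), and that for an indecomposable $X$ the algebra $\End_\C(X)$ is finite-dimensional and local, with nilpotent maximal ideal $\m_X$ equal to the set of non-isomorphisms, and with $\End_\C(X)/\m_X=\kk$ since $\kk$ is algebraically closed; thus every endomorphism of $X$ is uniquely $\lambda\,\id_X+n$ with $\lambda\in\kk$ and $n\in\m_X$ nilpotent.

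Next I would prove the indecomposable case of the characterization. Let $X,Y$ be indecomposable and $f\colon X\to Y$. If $f$ is not an isomorphism, then $f\circ g$ is a non-isomorphism for every $g\colon Y\to X$: were $f\circ g$ invertible, $e:=g\circ(f\circ g)^{-1}\circ f$ would be a nonzero idempotent in $\End_\C(X)$ (nonzero because $f\circ e=f$, and $f\ne 0$ as $f\circ g$ is invertible), hence $e=\id_X$ since $X$ is indecomposable and $\C$ is Karoubian, so $f$ would be a split monomorphism and also, since $f\circ g$ is invertible, a split epimorphism, hence an isomorphism --- a contradiction. Therefore $f\circ g\in\m_Y$ is nilpotent and $\Tr^L(f\circ g)=0$ by (1), so $f$ is negligible. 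If instead $f$ is an isomorphism, then $\dim^L X=\dim^L Y$ by cyclicity of the pivotal trace, and for any $g$ we write $f\circ g=\lambda\,\id_Y+h$ with $h$ nilpotent, giving $\Tr^L(f\circ g)=\lambda\,\dim^L Y$ by (1); this vanishes for all $g$ when $\dim^L Y=0$, while for $\dim^L Y\ne 0$ the choice $g=f^{-1}$ gives $\Tr^L(f\circ g)=\dim^L Y\ne 0$. Hence $f$ is negligible iff $f$ is not an isomorphism or $\dim^L X=\dim^L Y=0$, which (since in the isomorphism case $\dim^L X=\dim^L Y$) is the statement of Lemma \ref{charneg}. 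The general case then follows exactly as there: decompose $X=\oplus_i X_i$, $Y=\oplus_j Y_j$ into indecomposables, write $f=(f_{ij})$, and for a test morphism $g=(g_{ji})$ use $\Tr^L(f\circ g)=\sum_{i,j}\Tr^L(f_{ij}\circ g_{ji})$ together with test morphisms supported on a single block.

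I would then check that $\N(\C)$ is a tensor ideal exactly as in Lemma \ref{tensid}: closure under linear combinations and under pre- and post-composition is immediate, and for $a\colon Z\to T$ and $g\colon T\ot Y\to Z\ot X$ the identity $\Tr^L\big((a\ot f)\circ g\big)=\Tr^L(f\circ g')$ with $g':=\Tr^L_T\big((a\ot\id)\circ g\big)$, valid in any pivotal category, shows $a\ot f$ (and similarly $f\ot b$) is negligible. Thus $\overline\C:=\C/\N(\C)$ is defined and, being a quotient of $\C$ by a tensor ideal, inherits the Karoubian rigid monoidal and pivotal structure. By the characterization, for indecomposable $X,Y$ one has $\Hom_{\overline\C}(X,Y)=0$ unless $X\cong Y$ and $\dim^L X\ne 0$, in which case $\Hom_{\overline\C}(X,Y)\cong\End_\C(X)/\m_X=\kk$, while every indecomposable of dimension $0$ becomes a zero object; combined with Krull--Schmidt this proves $\overline\C$ is semisimple with simple objects the indecomposables of $\C$ of nonzero dimension. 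Here hypothesis (2) is what guarantees that this collection of simple objects is stable under duality --- since $\dim^L(X^*)=\dim^R(X)$, an indecomposable $X$ survives in $\overline\C$ if and only if $X^*$ does --- so that $\overline\C$ is genuinely a rigid (indeed pivotal) semisimple tensor category, and that negligibility could equivalently have been tested with $\Tr^R$.

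\textbf{Main obstacle.} The only step that is not a routine transcription of the spherical-case arguments is the indecomposable case of the characterization, specifically showing that a non-isomorphism between indecomposables composes on either side only to non-isomorphisms; in the abelian setting this used the decomposition of a monomorphism into image and kernel, and here it must be carried out purely via the splitting of the idempotent $e=g\circ(f\circ g)^{-1}\circ f$ and the Krull--Schmidt structure of $\End_\C(X)$. I would also be careful that hypothesis (1), stated only for the left trace, genuinely suffices --- it does, because $\Tr^L$ is the only trace entering the definition of negligibility --- and that (2) is invoked only at the final step, to see that $\overline\C$ is closed under duals.
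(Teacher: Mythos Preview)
Your proof is largely correct and indeed fills in details the paper elides (notably the Karoubian replacement for the abelian argument in Lemma~\ref{charneg}), but there is a genuine gap in the tensor-ideal step, and with it a misattribution of where hypothesis~(2) enters.

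You assert that the identity $\Tr^L\big((a\otimes f)\circ g\big)=\Tr^L(f\circ g')$, with $g'=\Tr^L_T\big((a\otimes\id)\circ g\big)$, is ``valid in any pivotal category''. It is not. In a pivotal category the left trace $\Tr^L_{A\otimes B}$ decomposes as $\Tr^L_A$ applied to the (pivotal) partial trace over the \emph{right} factor $B$; there is no $\Tr^L$-compatible partial trace over the left factor. Hence your identity works for $f\otimes b$ (tracing out $b$ on the right isolates $f$), but \emph{fails} for $a\otimes f$. Concretely, if (2) fails---say $X$ is indecomposable with $\dim^L X=0$ but $\dim^R X\ne 0$---then $\id_X$ is $\Tr^L$-negligible, yet $\id_{X^*}\otimes\id_X=\id_{X^*\otimes X}$ is not: the endomorphism $g=(\id_{X^*}\otimes u_X^{-1})\circ\coev_{X^*}\circ\ev_X$ of $X^*\otimes X$ has $\Tr^L(g)=\dim^R X\ne 0$. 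So $\N(\C)$, defined via $\Tr^L$ alone, is not closed under left tensoring, and (2) cannot be deferred to a final duality check.

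This is exactly why the paper begins by using (1) and (2) together to prove $\Tr^L(h)=0\Leftrightarrow\Tr^R(h)=0$ for every endomorphism $h$ (via $\Tr^L(f)=\Tr^R(f^*)$ and decomposition into generalized eigenobjects): once left- and right-negligibility coincide, the $a\otimes f$ case follows from the $\Tr^R$ version of your $f\otimes b$ argument, and the spherical proofs of Lemmas~\ref{charneg}, \ref{tensid} and Proposition~\ref{semisi} go through verbatim. Your proof is easily repaired by moving the use of (2) forward to establish this equivalence before the tensor-ideal argument. What you identify as the role of (2) (closure of $\overline\C$ under duals) and its actual role ($*$-stability of $\N(\C)$, equivalently two-sided tensoring stability) are in fact the same phenomenon---any tensor ideal in a rigid category is automatically $*$-stable---just seen at different stages.
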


\begin{proof} First of all, (1) implies that the right trace of any nilpotent endomorphism in $\C$ is zero, 
since $\Tr^L(f)=\Tr^R(f^*)$, see \cite{EGNO}, Proposition 4.7.3. 

Hence, for an endomorphism $h: X\to X$, $\Tr^L(h)=0$ if and only if $\Tr^R(h)=0$. 
Indeed, by decomposing $X$ into generalized eigenobjects of $h$, we may assume that $h=\lambda{\rm Id}+h_0$, where $h_0$ is nilpotent. 
Then $\Tr^L(h)=\lambda \dim^L X$ and $\Tr^R(h)=\lambda \dim^R X$ (as $\Tr^L(h_0)=\Tr^R(h_0)=0$), so our claim follows from (2). 

The rest of the proof is parallel to the spherical abelian case. 
\end{proof} 

\begin{example}\label{ex1} 1. If $\C$ is semisimple, then $\overline{\C}\cong\C$. Moreover, in this case for any tensor category $\D$ one has 
$\overline{\C\boxtimes \D}\cong\C\boxtimes \overline{\D}$. 

2. If ${\rm char}(\bold k)=p>0$ and $\C=\Rep_{\bold k} (\Bbb Z/p)$ then $\overline{\C}$ is the Verlinde category ${\rm Ver}_p$ introduced by Gelfand-Kazhdan and Georgiev-Mathieu, 
see \cite{O} and references therein.  

3. Let ${\rm char}(\k)=0$ and $\C=\Rep GL(n|1)$, $n\ge 1$. Then $\overline{\C}=\Rep (GL(n-1)\times GL(1)\times GL(1))\boxtimes {\rm Supervec}$, where ${\rm Supervec}$ is the category of supervector spaces, see \cite{H}, Theorem 4.13. 

4. Let $G=({\mathbb Z}/2{\mathbb Z})^2$ and ${\rm char}(\k)=2$.
Then it is well known that indecomposable representations of $G$ over $\k$ of non-zero mod $2$ (i.e. odd)
dimension are precisely $\Omega^n(\be), n\in {\mathbb Z}$, where $\Omega$ is the Heller
shift operator, see e.g. \cite[Theorem 4.3.3]{B1}. Also one deduces from \cite[Corollary 3.1.6]{B1} that
$$\Omega^n(\be)\otimes \Omega^m(\be)\simeq \Omega^{n+m}(\be)\oplus \mbox{a projective module}.
$$
Thus $\overline{\Rep_\k(G)}=\Vec_{\mathbb Z} =\Rep GL(1)$.
\end{example} 

\begin{remark} 1. It is clear that if $\C$ is symmetric or braided, then so is $\overline{\C}$ and the functor $\bold S$.

2. If $\C$ is finite then $\overline{\C}$ may be infinite (see Example \ref{ex1}(4)), and can, in fact, be unmanageably large, since
the problem of classifying indecomposable objects in finite abelian categories is often wild (in fact, this is already so for $\Rep_{\bold k}(\Bbb Z/p)^2$, where ${\rm char}(\bold k)=p>2$).  
\end{remark} 

\begin{remark}\label{remhopf} 1. Let $\C=\Rep H$, where $H$ is a finite dimensional Hopf algebra over a field $\bold k$ of characteristic zero. 
Then condition (2) of Theorem \ref{karpiv} (that $\dim^L X=0$ if and only if $\dim^R X=0$) holds for any pivotal structure. Indeed, we may assume that $\k=\Bbb C$. A pivotal structure on $\Rep H$ is given by a grouplike element $g\in H$ such that $gxg^{-1}=S^2(x)$ for $x\in H$, and $\dim^L X=\Tr_X(g)$, $\dim^RX=\Tr_X(g^{-1})$. But $g$ has finite order, so the eigenvalues of $g$ are roots of unity, hence $\dim^R X=\overline{\dim^L X}$, as desired. 
We expect that the same holds for any finite tensor category over a field of characteristic zero. 

However, the above condition can be violated for categories of finite dimensional modules or comodules 
over an infinite dimensional Hopf algebra. For example, let $\C$ be the category of finite dimensional representations 
of $U_q(\mathfrak{b})$, $q\in \Bbb C^\times$, where $\mathfrak{b}\subset \mathfrak{sl}_3$ is a Borel subalgebra. Recall that a pivotal structure on $\C$ 
is defined by the element $K=q^{2\rho}$. Let $X$ be the $U_q(\mathfrak{b})$ subrepresentation of the adjoint representation of $U_q(\mathfrak{sl}_3)$ (with highest weight $\alpha_1+\alpha_2$) spanned by the vectors whose weights are positive roots. Then $\dim^L X=2q^2+q^4$ and $\dim^R X=2q^{-2}+q^{-4}$. So if $q^2=-2$ then $\dim^L X=0$ but $\dim^R X=-3/4\ne 0$. 

The same happens in characteristic $p$, even for a finite dimensional Hopf algebra. Namely, we can take the same example. Note that $q^2=-2$ 
is then a root of unity (or some order dividing $p-1$), so one may replace $U_q(\mathfrak b)$ with the corresponding small quantum group $\mathfrak{u}_q(\mathfrak{b})$.  

2. Condition (1) of Theorem \ref{karpiv} holds true if $\C$ is an abelian tensor category, since
the quantum trace is additive on exact sequences, see e.g. \cite[Proposition 4.7.5]{EGNO}.
Moreover, assume that there exists a pivotal tensor functor $\C \to \D$, where the category $\D$
satisfies condition (1) of Theorem \ref{karpiv} (e.g., $\D$ is abelian). Then obviously the category 
$\C$ also satisfies condition (1) of Theorem \ref{karpiv}. This observation was used by
U.~Jannsen to prove that the category of numerical motives is semisimple, see \cite{Ja}. 
Moreover, the assumption on finite dimensionality of morphism spaces in $\C$ in Theorem \ref{karpiv} can be dropped if there exists a pivotal monoidal functor $F:\C \to \D'$, where all morphism
spaces in $\D'$ are finite dimensional, since the tensor ideal of morphisms sent by $F$ to
zero consists of negligible morphisms, which implies finite dimensionality of morphism spaces
in $\C/\N(\C)$. 

Here is an example of such a situation. Take any collection of morphisms in a symmetric tensor category $\D$, compute
some of relations between them, and define $\C$ to be the Karoubian envelope of the universal symmetric monoidal category 
generated by morphisms satisfying these relations. Then we have an obvious symmetric monoidal functor $\C\to \D$, hence the semisimplification of $\C$ is a semisimple symmetric tensor category. 
\end{remark} 

\section{General results on semisimplification of tensor categories}

\subsection{Splitting of the semisimplification functor for Tannakian categories in characteristic zero, reductive envelopes, and the Jacobson-Morozov lemma} 

For Tannakian categories in characteristic zero, Andr\'e and Kahn showed that the semisimplification functor $\bold S$ admits a splitting $\bold S^*$, and used it to show the existence and uniqueness (up to conjugation) of the reductive envelope of any affine proalgebraic group in characteristic zero. In this subsection we review this theory (cf. \cite{AK},\cite{S}). 

\begin{theorem} \label{splitting} (\cite{AK}, Theorem 1, Theorem 2) If ${\rm char}(\k)=0$ and $\C=\Rep G$ is a Tannakian category over $\k$ (where $G$ is an affine proalgebraic group over $\k$), then the functor $\bold S: \C\to \overline{\C}$ admits a splitting $\bold S^*: \overline{\C}\to {\C}$, a surjective tensor functor such that $\bold S^*(\overline X)\cong X$ for each indecomposable $X\in \C$, and ${\bold S}\circ \bold S^*\cong {\rm Id}$ as a symmetric tensor functor. 
\end{theorem}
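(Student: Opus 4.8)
The plan is to follow André and Kahn \cite{AK}, whose result is the monoidal---and, for $\C$ symmetric, the symmetric monoidal---refinement of the Wedderburn--Malcev principal theorem: in a Krull--Schmidt $\k$-linear monoidal category whose radical ideal is $\otimes$-nilpotent, the semisimple quotient lifts back along the quotient functor compatibly with the tensor structure, uniquely up to monoidal isomorphism. Concretely I would build $\bold S^*$ in three stages: an additive section, then a monoidal one, then a symmetric monoidal one, the essential input being that in characteristic zero the ideal $\N(\C)$ of negligible morphisms is $\otimes$-nilpotent. Observe at the outset that a fiber functor $\omega\colon\C\to\Vec_\k$ identifies the spherical trace with the ordinary trace of a vector-space endomorphism, so every nonzero object of $\C$ has nonzero dimension; hence $\bold S$ kills no object, and the entire content is the lifting of the semisimple quotient of the \emph{additive} category $\C$ together with its tensor structure. (This also accounts for the asserted $\bold S^*(\overline X)\cong X$ for \emph{every} indecomposable $X$.)

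For the additive section, note that for indecomposable $X$ the algebra $\End_\C(X)$ is finite dimensional, local, with residue field $\k$, and by Lemma \ref{charneg} one has $\N(X,X)=\rad\End_\C(X)$; more generally, since all nonzero objects have nonzero dimension, Lemma \ref{charneg} identifies $\N(\C)$ with the Kelly radical of the additive category $\C$, so $\overline\C$ is its semisimplification with simple objects the indecomposables of $\C$. Choosing a representative $X_\alpha$ in each isomorphism class of indecomposables, one puts $\bold S^*(\overline{X_\alpha}):=X_\alpha$, extends by additivity, and defines $\bold S^*$ on morphisms using the canonical algebra splitting $\lambda\mapsto\lambda\,\Id_{X_\alpha}$ of $\End_\C(X_\alpha)\twoheadrightarrow\k$ together with $\Hom_{\overline\C}(\overline{X_\alpha},\overline{X_\beta})=0$ for $\alpha\neq\beta$; this yields an additive functor with $\bold S\circ\bold S^*=\Id_{\overline\C}$ strictly. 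The one nontrivial fact entering here, to be imported from \cite{AK}, is that in characteristic zero $\N(\C)$ is $\otimes$-nilpotent, i.e. for each $X$ some $\otimes$-power of $\N(X,X)$ vanishes---essentially because $\omega(f)$ is nilpotent for $f$ negligible and the characteristic-zero representation theory of the symmetric groups forces the Schur functors $\mathbb S_\lambda(f)$ to vanish once $\lambda$ has enough boxes.

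To promote $\bold S^*$ to a monoidal functor one must produce coherent isomorphisms $\bold S^*(\overline X)\otimes\bold S^*(\overline Y)\xrightarrow{\sim}\bold S^*(\overline X\otimes\overline Y)$. Since $\bold S$ is monoidal, the multiplicity of an indecomposable in $X\otimes Y$ equals the multiplicity of the corresponding simple in $\overline X\otimes\overline Y$, so such isomorphisms exist object by object; the point is coherence (pentagon, unit). Here $\otimes$-nilpotence is used to filter $\N(\C)$ by its $\otimes$-powers, reducing to square-zero extensions, where the obstruction to a coherent monoidal structure and the ambiguity of the choices lie in $\k$-linear (Hochschild-type) groups that vanish; this gives a monoidal $\bold S^*$, unique up to monoidal isomorphism (\cite{AK}, Theorem 1). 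For $\C$ symmetric one additionally imposes compatibility with the braiding; the residual obstruction is governed by the $S_n$-action on the tensor powers $X^{\otimes n}$ and is killed by averaging---this is where characteristic zero is indispensable---yielding a symmetric monoidal $\bold S^*$, unique up to symmetric monoidal isomorphism (\cite{AK}, Theorem 2). Then $\bold S\circ\bold S^*$ is a symmetric monoidal endofunctor of $\overline\C$ that equals the identity as a $\k$-linear functor, so by this uniqueness it is symmetric monoidally isomorphic to $\Id_{\overline\C}$; and $\bold S^*$ is essentially surjective, in particular surjective, since its image contains every indecomposable of $\C$ and every object of $\C$ is a finite direct sum of these.

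The main obstacle is the pair of technical facts quoted from \cite{AK}: the $\otimes$-nilpotence of negligible morphisms in characteristic zero, and the consequent vanishing of the coherence obstructions to a monoidal and then symmetric monoidal lift (where the averaging over symmetric groups is essential). These are the heart of the matter; in the write-up I would cite them as black boxes rather than reprove them, the rest being routine manipulation with the Krull--Schmidt and Wedderburn--Malcev theorems.
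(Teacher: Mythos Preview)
The paper does not supply its own proof of this theorem: it is stated with a bare citation to \cite{AK} and then used as a black box (e.g.\ in the proof of Theorem \ref{ak}). Your proposal is precisely an outline of the Andr\'e--Kahn argument the paper is citing, so in that sense your approach coincides with the paper's. Moreover your sketch is essentially correct: the key observations are that in a Tannakian category in characteristic zero every nonzero object has nonzero dimension (so $\N(\C)=\rad(\C)$ by Lemma \ref{charneg}), and that this radical is $\otimes$-nilpotent, which feeds into the monoidal Wedderburn--Malcev lifting of \cite{AK}. The paper in fact records the more general version you are implicitly using as Theorem \ref{AKth}: for any Karoubian pivotal $\C$ with $\N(\C)=\rad(\C)$ (equivalently, no nonzero indecomposable of dimension zero), the semisimplification functor admits a monoidal splitting.

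One small comment on the final step: your justification that $\bold S\circ\bold S^*\cong\Id$ as a \emph{symmetric monoidal} functor via ``uniqueness'' is not quite the right lever. The cleaner argument is direct: $\bold S$ is strict monoidal (it is a quotient), and the monoidal constraint $\mu$ on $\bold S^*$ is constructed as a lift of the identity constraint through the quotient, so $\bold S(\mu)=\Id$ on the nose; hence $\bold S\circ\bold S^*$ is the identity with its identity monoidal (and symmetric) structure. This is a cosmetic point and does not affect the validity of your outline.
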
 

Now let $\C$ be as above and $F$ be the forgetful functor $\C\to \Vec$. Then $F\circ \bold S^*: \overline{\C}\to \Vec$ is a fiber functor, so by the Tannakian formalism (\cite{DM}), we have $\overline{\C}=\Rep \overline{G}$, where $\overline{G}:=\Aut(F\circ \bold S^*)$ is a reductive affine proalgebraic group, equipped with a homomorphism $\psi_G: G\to \overline{G}$ (defined up to conjugation in $\overline{G}$) giving rise to the functor $\bold S^*$. Moreover, since $\bold S^*$ is surjective, $\psi_G$ is an inclusion. 

\begin{definition} (\cite{AK}) The group $\overline{G}$ equipped with the homomorphism $\psi_G$ (defined up to conjugation) is called the 
{\it reductive envelope} of $G$. 
\end{definition} 

\begin{theorem} (\cite{AK}, Theorem 3, Theorem 4)\label{ak} The reductive envelope $\overline{G}$ enjoys the following universal property: 
If $\phi: G\to L$ is a homomorphism from $G$ to a reductive proalgebraic group $L$, then there exists a homomorphism 
$\overline{\phi}: \overline{G}\to L$ such that $\phi=\overline{\phi}\circ \psi_G$. Moreover, $\overline{\phi}$ is unique up to conjugation in $L$ 
by elements commuting with $\phi(G)$. 
\end{theorem}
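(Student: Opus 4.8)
The plan is to deduce this universal property directly from Theorem \ref{splitting} together with the Tannakian formalism, thinking of everything on the level of representation categories rather than groups. Suppose $\phi: G\to L$ is a homomorphism to a reductive proalgebraic group $L$. Pulling back along $\phi$ gives a tensor functor $\phi^*: \Rep L\to \Rep G=\C$. Since $L$ is reductive and $\mathrm{char}(\k)=0$, the category $\Rep L$ is semisimple, so $\phi^*$ lands in the ``negligible-free'' part of $\C$; more precisely, composing with the semisimplification functor $\bold S:\C\to\overline\C$ gives a tensor functor $\bold S\circ\phi^*: \Rep L\to\overline\C$. I claim this is the sought-after functor realizing $\overline\phi$: we define $\overline\phi$ to be the homomorphism $\overline G\to L$ induced by $\bold S\circ\phi^*$ via Tannakian reconstruction, using that $\overline G=\Aut(F\circ\bold S^*)$ and that $F\circ\bold S^*\circ(\bold S\circ\phi^*)\cong F\circ\phi^*$ is the forgetful functor on $\Rep L$ (here one uses $\bold S\circ\bold S^*\cong\mathrm{Id}$ from Theorem \ref{splitting}).

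First I would make precise that $\bold S\circ\phi^*$ is a genuine tensor (exact) functor $\Rep L\to\overline\C$, not merely monoidal: since $\Rep L$ is semisimple, every object is a direct sum of simples, and $\phi^*$ of a simple $L$-module is a (possibly decomposable, but all-nonzero-dimensional) object of $\C$ — indeed its image under $\bold S$ is a direct sum of simple objects of $\overline\C$, and exactness on the semisimple category $\Rep L$ is automatic. Next I would verify the factorization $\phi=\overline\phi\circ\psi_G$. On the categorical side this amounts to the isomorphism of tensor functors $\phi^*\cong \psi_G^*\circ\overline\phi^{\,*}$, equivalently $\phi^*\cong \bold S^*\circ(\bold S\circ\phi^*)$ after identifying $\overline\phi^{\,*}$ with $\bold S\circ\phi^*$ and $\psi_G^*$ with $\bold S^*$. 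But $\bold S^*\circ\bold S\circ\phi^*$ sends each indecomposable summand $X$ of $\phi^*(V)$ (for $V$ a simple $L$-module) to $\bold S^*(\overline X)\cong X$ by Theorem \ref{splitting}, and all these $X$ have nonzero dimension since $\phi^*(V)$ is a summand of something coming from $\Rep L$ pulled back — hence $\bold S^*\circ\bold S\circ\phi^*\cong\phi^*$ as tensor functors, which dualizes to $\phi=\overline\phi\circ\psi_G$.

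For uniqueness: suppose $\overline\phi_1,\overline\phi_2:\overline G\to L$ both satisfy $\phi=\overline\phi_i\circ\psi_G$. On representation categories, $\overline\phi_1^{\,*}$ and $\overline\phi_2^{\,*}$ are tensor functors $\Rep L\to\overline\C$ whose composites with $\bold S^*:\overline\C\to\C$ (i.e. with $\psi_G^*$) are both isomorphic to $\phi^*$. Applying $\bold S$ and using $\bold S\circ\bold S^*\cong\mathrm{Id}$ shows $\overline\phi_1^{\,*}\cong\overline\phi_2^{\,*}$ as tensor functors $\Rep L\to\overline\C$; composing with the fiber functor on $\overline\C$, both realize the same fiber functor on $\Rep L$, so by the Tannakian formalism (\cite{DM}) the two functors differ by an automorphism of that fiber functor, i.e. by an element of $L(\k)$ — and one checks this element centralizes $\phi(G)$ because the isomorphism $\overline\phi_1^{\,*}\cong\overline\phi_2^{\,*}$ becomes the identity after restricting along $\psi_G$ (which encodes $\phi$). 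This gives exactly ``$\overline\phi$ unique up to conjugation in $L$ by elements commuting with $\phi(G)$.''

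The main obstacle I anticipate is the bookkeeping in the last step — tracking how an isomorphism of tensor functors corresponds, under Tannakian duality, to conjugation by a specific group element, and pinning down that this element lies in the centralizer of $\phi(G)$ rather than all of $L$. This is the one place where one must carefully translate between the ``automorphisms of fiber functors'' picture and the ``conjugation of homomorphisms'' picture, keeping straight which data is fixed (the restriction along $\psi_G$, equivalently $\phi$ itself) and which is allowed to vary. Everything else is a fairly direct assembly of Theorem \ref{splitting}, the semisimplicity of $\Rep L$, and the standard Tannakian dictionary of \cite{DM}.
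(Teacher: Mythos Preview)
Your proposal is correct and follows essentially the same route as the paper's proof: pass to representation categories, define the candidate homomorphism via the tensor functor $\bold S\circ\phi^*:\Rep L\to\overline{\C}=\Rep\overline{G}$, check the factorization using $\bold S^*\circ\bold S\circ\phi^*\cong\phi^*$, and deduce uniqueness from $\bold S\circ\bold S^*\cong\mathrm{Id}$. One small point the paper handles a bit more carefully: the isomorphism $\phi^*\cong\psi_G^*\circ\overline\phi^{\,*}$ only yields that $\phi$ and $\overline\phi\circ\psi_G$ are \emph{conjugate} in $L$, so one must first replace $\overline\phi$ by a conjugate to obtain the equality $\phi=\overline\phi\circ\psi_G$ on the nose; after this adjustment the centralizer claim in the uniqueness statement is automatic, since if $\ell\,\overline\phi_1\,\ell^{-1}=\overline\phi_2$ with both $\overline\phi_i\circ\psi_G=\phi$, then $\ell$ visibly centralizes $\phi(G)$.
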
 

\begin{proof} The morphism $\phi$ gives rise to a symmetric tensor functor $\Phi: \Rep L\to \Rep G$. Consider the functor 
$\Phi':=\bold S\circ \Phi$. Even though $\bold S$ may not be exact on any side, the functor $\Phi'$ is exact since the category 
$\Rep L$ is semisimple (as $L$ is reductive). Thus, $\Phi': \Rep L\to \Rep \overline{G}$ is a symmetric tensor functor. Hence, by Tannakian formalism (\cite{DM}) it comes from a homomorphism $\phi': \overline{G}\to L$ defined uniquely up to conjugation in $L$. Moreover, consider the functor $\bold S^*\circ \Phi'=\bold S^*\circ \bold S\circ \Phi$. 
This functor is exact since its source is a semisimple category, so it is a symmetric tensor functor, and it follows from Proposition 13.7.1 of \cite{AK} that it is naturally isomorphic to $\Phi$ as a tensor functor. This means that the homomorphisms $\phi$ and $\phi'\circ \psi_G$ are conjugate under $L$: $\phi(g)=\ell \phi'(\psi_G(g))\ell^{-1}$ for some $\ell\in L$ and all $g\in G$. Hence, 
$\phi(g)=\widetilde{\phi}(\psi_G(g))$ for all $g\in G$, where $\widetilde{\phi}(a):=\ell \phi'(a)\ell^{-1}$. 

Finally, let us show that the homomorphism $\widetilde{\phi}$ in the theorem is determined uniquely up to conjugation in $L$ 
(automatically by elements commuting with $\phi(G)$). To this end, let $\widetilde\Phi: \Rep L\to \Rep G$ 
be the functor defined by $\widetilde{\phi}$. Then $\bold S^*\circ \widetilde{\Phi}=\Phi$, hence, postcomposing with $\bold S$, we get $\widetilde \Phi=\bold S\circ \Phi$. Thus, $\widetilde \Phi$ is uniquely determined and hence $\widetilde{\phi}$ is determined up to conjugation, as desired.    
\end{proof} 

\begin{remark} A geometric proof of the existence and properties of the reductive envelope is given in \cite{S}. 
\end{remark} 

\begin{example} Consider the special case $G=\Bbb G_a$. In this case the indecomposable representations of $G$ are unipotent Jordan blocks $J_n$ of sizes $n=1,2,3...$, so it is easy to see that $\overline{\Rep G}\cong \Rep SL(2)$ (as the Grothendieck ring of $\overline{\Rep G}$ 
coincides with that of $\Rep SL(2)$, and the dimensions of nonzero objects of $\overline{\Rep G}$ are positive). 
So in this case the existence of $\overline{G}$ is easy (namely, $\overline{G}=SL(2)$), and the existence of the splitting $\bold S^*$ is also straightforward (namely, $\bold S^*$ is induced by 
the standard inclusion $\psi_G: \Bbb G_a\hookrightarrow SL(2)$ 
as upper triangular matrices with ones on the diagonal). Thus, Theorem \ref{ak} in this case tells us that any homomorphism 
$\phi: \Bbb G_a\to L$ for a reductive proalgebraic group $L$ uniquely (up to conjugacy) factors through a homomorphism 
$\widetilde{\phi}: SL(2)\to L$. As pointed out in \cite{AK, S}, this implies the celebrated {\it Jacobson-Morozov Lemma}: 

\begin{proposition} Let $L$ be a reductive algebraic group over $\k$, and $u\in L$ a unipotent element. 
Then there exists a homomorphism $\theta: SL(2)\to L$ such that $\theta\left(\begin{matrix} 1&1\\ 0&1\end{matrix}\right)=u$. 
Moreover, $\theta$ is unique up to conjugation by the centralizer $Z_u$ of $u$.   
\end{proposition}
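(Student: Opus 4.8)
The plan is to derive the Jacobson–Morozov Lemma directly from Theorem \ref{ak} applied to $G = \Bbb G_a$, using the explicit identification $\overline{\Rep \Bbb G_a} \cong \Rep SL(2)$ discussed just above, so that $\overline{\Bbb G_a} = SL(2)$ with $\psi_{\Bbb G_a}: \Bbb G_a \hookrightarrow SL(2)$ the standard embedding as unipotent upper triangular matrices.

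First I would fix a unipotent element $u \in L$ and use it to build a homomorphism $\phi: \Bbb G_a \to L$. Since $\k$ has characteristic zero, $u = \exp(N)$ for a unique nilpotent $N$ in the Lie algebra of $L$ (or, at the group level, $t \mapsto u^t$ via the one-parameter unipotent subgroup generated by $u$ is a well-defined algebraic homomorphism), so we get $\phi$ with $\phi(1) = u$. Next I would invoke Theorem \ref{ak}: it produces a homomorphism $\widetilde\phi: \overline{\Bbb G_a} \to L$, i.e.\ $\widetilde\phi: SL(2) \to L$, with $\phi = \widetilde\phi \circ \psi_{\Bbb G_a}$. Setting $\theta := \widetilde\phi$ and evaluating at the generator $\left(\begin{smallmatrix} 1 & 1 \\ 0 & 1 \end{smallmatrix}\right) = \psi_{\Bbb G_a}(1)$ gives $\theta\left(\begin{smallmatrix} 1 & 1 \\ 0 & 1 \end{smallmatrix}\right) = \phi(1) = u$, which is exactly the existence statement.

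For uniqueness, I would again quote the uniqueness clause of Theorem \ref{ak}: $\widetilde\phi$ is determined up to conjugation in $L$ by elements commuting with $\phi(\Bbb G_a)$. It then remains to observe that an element of $L$ commutes with the whole image $\phi(\Bbb G_a) = \{u^t\}$ if and only if it commutes with $u$ itself — one direction is trivial, and the other follows because the Zariski closure of $\langle u \rangle$ is precisely the one-parameter subgroup $\{u^t\}$ (again using $\mathrm{char}\,\k = 0$), so any element centralizing $u$ centralizes its closure. Hence the conjugating elements range exactly over the centralizer $Z_u$, giving uniqueness up to conjugation by $Z_u$.

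The main obstacle is not really a deep one but a bookkeeping point: making precise the passage from the abstract universal property to the concrete statement, namely (a) justifying that $t \mapsto u^t$ is a genuine morphism of algebraic groups $\Bbb G_a \to L$ in characteristic zero, and (b) matching the generator of $\psi_{\Bbb G_a}$ with the specific matrix $\left(\begin{smallmatrix} 1 & 1 \\ 0 & 1 \end{smallmatrix}\right)$ under the chosen identification $\overline{\Bbb G_a} = SL(2)$. The possibly subtlest point is that Theorem \ref{ak} as stated requires $L$ reductive and proalgebraic; for $L$ a reductive algebraic group (as in the Proposition) this is a special case, and no extra work is needed. The pro-algebraic vs.\ algebraic distinction and the requirement that $\phi$ factor through a \emph{finite-type} quotient cause no trouble because $\Bbb G_a$ and its image are already of finite type.
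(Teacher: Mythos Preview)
Your proof is correct and follows exactly the paper's approach: apply Theorem \ref{ak} to the one-parameter unipotent subgroup $\phi:\Bbb G_a\to L$ determined by $u$ (with $\phi(1)=u$), using the identification $\overline{\Bbb G_a}=SL(2)$ and $\psi_{\Bbb G_a}(1)=\left(\begin{smallmatrix}1&1\\0&1\end{smallmatrix}\right)$. Your discussion is in fact slightly more detailed than the paper's, which leaves the identification of the centralizer of $\phi(\Bbb G_a)$ with $Z_u$ implicit.
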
 

\begin{proof} Let $G$ be the 1-parameter unipotent subgroup of $L$ generated by $u$, and $\phi: G\to L$ be the corresponding embedding. Identify $G$ with $\Bbb G_a$ by sending $u$ to $1$. Then it remains to apply Theorem \ref{ak} and set $\theta=\widetilde\phi$.  
\end{proof} 
\end{example} 

Note that when $G$ is an algebraic group then $\overline{G}$ is typically only a proalgebraic group (of infinite type), which can be very large. In fact, this is already so when $G=\Bbb G_a^2$, since the problem of classifying pairs of commuting matrices is well known to be wild; i.e., the case $G=\Bbb G_a$ (leading to the Jacobson-Morozov Lemma) is a rare exception. 
In other words, the whole category $\Rep \overline{G}$ is typically unmanageable. However, it makes sense to consider 
tensor subcategories of this category generated by a single object, which are more manageable. Namely, we have the following corollary.

\begin{corollary}\label{parenv} Let $G$ be an affine algebraic group over $\k$, and $V\in \Rep G$ a faithful representation of $G$ (so that $G\hookrightarrow GL(V)$). Then there exists a reductive algebraic group $G_V\subset GL(V)$ containing $G$ (a quotient of $\overline{G}$) such that the subcategory $\C_V$ of $\overline{\Rep G}\cong \Rep \overline{G}$ tensor generated by $\overline{V}$ is naturally equivalent to $\Rep G_V$.    
\end{corollary}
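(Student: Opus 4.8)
The plan is to build $G_V$ as the image of $\overline{G}$ under a suitable tensor functor and then identify $\C_V$ with its representation category via the Tannakian formalism. First I would apply Theorem \ref{ak}: the embedding $G\hookrightarrow GL(V)$ is a homomorphism from $G$ to the reductive group $GL(V)$, so it factors through $\psi_G$, giving a homomorphism $\widetilde\phi\colon \overline{G}\to GL(V)$ with $\widetilde\phi\circ\psi_G$ equal to the inclusion $G\hookrightarrow GL(V)$. Define $G_V:=\widetilde\phi(\overline{G})\subset GL(V)$, a closed subgroup of $GL(V)$; since $\overline{G}$ is (pro)reductive and $G_V$ is an algebraic quotient of $\overline{G}$, the group $G_V$ is reductive, and it contains $\widetilde\phi(\psi_G(G))=G$. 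This handles existence of the group.

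Next I would identify the category. The functor $\widetilde\phi$ induces a symmetric tensor functor $\Psi\colon \Rep G_V\to \Rep\overline{G}=\overline{\Rep G}$, which is fully faithful onto a tensor subcategory because $G_V\to GL(V)\cong\Aut$ of the fiber functor, i.e. because $G_V$ is a closed subgroup of $GL(V)$ and $\Rep\overline{G}$ is obtained from $\Rep\overline{G}$ via the fiber functor $F\circ\bold S^*$ — more precisely, $\widetilde\phi$ realizes $G_V$ as a quotient of $\overline{G}$, so $\Rep G_V$ is a full tensor subcategory of $\Rep\overline{G}$ consisting of those representations on which the kernel of $\widetilde\phi$ acts trivially. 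Under this identification, the tautological representation of $G_V$ on $V$ corresponds to $\overline{V}$ (since $\widetilde\phi$ pulls the standard representation of $GL(V)$ back to $\overline{V}$, as the composite $G\to\overline{G}\to GL(V)$ is the original inclusion and $\bold S^*(\overline{V})\cong V$ by Theorem \ref{splitting}). Therefore $\Rep G_V$, sitting inside $\overline{\Rep G}$, is a tensor subcategory containing $\overline{V}$, so $\C_V\subseteq\Rep G_V$.

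For the reverse inclusion I would argue that $G_V$ is generated, as an algebraic group, by the image of $\overline{G}$ in $GL(V)$, which is determined by how $\overline{G}$ acts on $V$ alone; hence every object of $\Rep G_V$ is a subquotient of a direct sum of tensor powers $\overline{V}^{\otimes a}\otimes(\overline{V}^*)^{\otimes b}$ — this is the standard fact that for a closed subgroup $K\subset GL(V)$ every representation of $K$ is a subquotient of $\bigoplus V^{\otimes a}\otimes(V^*)^{\otimes b}$ (Deligne–Milne). Since $\overline{\Rep G}$ is semisimple, ``subquotient'' means ``summand,'' so every simple object of $\Rep G_V$ is a summand of some $\overline{V}^{\otimes a}\otimes(\overline{V}^*)^{\otimes b}$, and thus lies in the tensor subcategory generated by $\overline{V}$. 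This gives $\Rep G_V\subseteq\C_V$ and hence equality.

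\textbf{Main obstacle.} The delicate point is the precise sense in which $\Rep G_V$ is a (full) tensor subcategory of $\Rep\overline{G}=\overline{\Rep G}$, and the compatibility of fiber functors under it: one must check that the fiber functor $F\circ\bold S^*$ on $\overline{\Rep G}$ restricts, along $\Psi$, to the standard fiber functor on $\Rep G_V$ given by $V$, so that $\Aut^\otimes$ of the restricted functor is exactly $G_V$ rather than something larger. This is where one uses that $\widetilde\phi\colon\overline{G}\to GL(V)$ has image $G_V$ and that $\bold S^*(\overline V)\cong V$; once that matching of fiber functors is in place, the identifications are forced by Tannakian reconstruction, but setting it up carefully — especially since $\overline{G}$ is only proalgebraic — is the part that requires care.
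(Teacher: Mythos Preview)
Your proposal is correct but takes a different route from the paper. The paper proceeds in the opposite direction: rather than first constructing $G_V$ as the image of a homomorphism $\overline{G}\to GL(V)$ and then identifying its representation category with $\C_V$, it \emph{defines} $G_V$ directly by Tannakian reconstruction, setting $G_V:=\Aut^\otimes(F_V)$ where $F_V$ is the restriction to $\C_V$ of the fiber functor $F\circ\bold S^*$ on $\Rep\overline{G}$. With this definition the equivalence $\Rep G_V\cong\C_V$ is automatic, the inclusion $G_V\subset GL(V)$ is immediate because $F_V(\overline V)=V$ and $\overline V$ tensor generates $\C_V$, and the fact that $G_V$ is a quotient of $\overline{G}$ (hence contains $G$) follows from the inclusion $\C_V\subset\Rep\overline{G}$.

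Your approach instead invokes Theorem~\ref{ak} to produce $\widetilde\phi:\overline{G}\to GL(V)$, sets $G_V$ equal to its image, and must then separately verify the equality $\Rep G_V=\C_V$ via the Deligne--Milne fact about subquotients of mixed tensor powers. This works and has the virtue of making the relation to the universal property of the reductive envelope explicit from the start. The paper's route is shorter precisely because the identification $\Rep G_V=\C_V$ is built into the definition, so the ``main obstacle'' you flag---matching up the fiber functors---simply does not arise.
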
    

\begin{proof} Let $F_V: \C_{\overline{V}}\to \Vec$ be the restriction of the fiber functor of $\Rep\overline{G}$ 
to $\C_{\overline{V}}$. Let $G_V:=\Aut(F_V)$. Then $G_V\subset GL(V)$ is a reductive subgroup such that 
$\Rep G_V=\C_{\overline{V}}$. Moreover, $G_V$ is a quotient of $\overline{G}$, hence we have a natural homomorphism 
$G\to G_V$, which is obviously injective, as desired. 
\end{proof} 

\begin{definition}\label{aksec} We will call $G_V$ the {\it reductive envelope of $G$ inside $GL(V)$}. 
\end{definition} 

\begin{remark} 1. Let $\C=\Rep_\k\Bbb Z/p$ , where ${\rm char}(\k)=p\ge 5$. Then a tensor functor $\bold S^*: \overline{\C}\to \C$ 
does not exist, since $\overline{\C}={\rm Ver}_p$ contains objects of non-integer Frobenius-Perron dimension. 
Also, if $\C=\Rep GL(n|1)$ over $\k$ of characteristic zero then a symmetric functor $\bold S^*$ as is Theorem \ref{splitting}  does not exist, either. 
Indeed, if $V$ is the vector representation of $GL(n|1)$ then $\wedge^{n-1}\bold S(V)=0$ (cf. Example \ref{ex1}(3)), while $\wedge^{n-1}V\ne 0$
(it is a negligible but nonzero object in $\C$). In fact, it is clear that a splitting functor $\bold S^*$ with the properties stated in Theorem \ref{splitting} cannot exist if $\C$ has indecomposable objects of dimension $0$. 

2. Note that the existence of the group $\overline{G}$ such that $\Rep \overline{G}\cong \overline{\Rep G}$ follows from Deligne's theorem (\cite{D1}, Theorem 7.1), since $\overline{\Rep G}$ is a symmetric tensor category over $\k$ in which nonzero objects have positive integer dimensions. This is, in fact, used in the proof of Theorem \ref{splitting} in \cite{AK}. 

Moreover, using a more general version of Deligne's theorem 
for supergroups, \cite{D2}, one can see that if $G$ is an affine proalgebraic supergroup over $\k$ of characteristic zero and $z\in G$ an element of 
order $\le 2$ acting on $O(G)$ by parity, and $\Rep(G,z)$ is the category of representations of $G$ on superspaces
on which $z$ acts by parity, then $\overline{\Rep(G,z)}=\Rep(\overline{G},\overline{z})$ for some 
reductive proalgebaric supergroup $\overline{G}$, i.e., one whose representation category is semisimple, see \cite{H}, Theorem 2.2. 
In particular, for each $V\in \Rep(G,z)$, $\overline{V}$ generates a category $\Rep(G_V,\overline{z})$, where $G_V$ is a reductive 
algebraic supergroup (a quotient of $\overline{G}$). This means that the connected component of the identity $G_V^0$ of $G_V$ 
is of the form $G_V'/C$, where $C$ is a finite central subgroup and $G_V'=G_V^+\times G_V^-$, where $G_V^+$ 
is a usual reductive group, and ${\rm Lie}G_V^-$ is a direct sum of Lie superalgebras of type $\mathfrak{osp}(1|2n)$, see \cite{W}.
\end{remark} 

In fact, as was explained to us by Thorsten Heidersdorf, the symmetric structure of the category is not essential in the Andr\'e-Kahn 
theorem  on the existence of splitting of the semisimplification functor. Namely, Theorem 12.1.1 and 13.2.1 
of \cite{AK} immediately imply the following theorem: 

\begin{theorem}\label{AKth} 
Let $\C$ be a Karoubian pivotal category as in Theorem \ref{karpiv}, such that the ideal ${\mathcal N}(\C)$ 
of negligible morphisms in $\C$ coincides with the nilpotent radical ${\rm rad}(\C)$ of $\C$ (where ${\rm rad}(\C)(X,Y)$ is the intersection 
of the radical of the algebra ${\rm End}(X\oplus Y)$ with $\Hom_\C(X,Y)$); in other words, $\C$ has no nonzero indecomposable objects 
of zero dimension. Then the semisimplification functor $\bold S: \C\to \overline{\C}$ admits a monoidal 
splitting $\bold S^*:\overline{\C}\to \C$. 
\end{theorem}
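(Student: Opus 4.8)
The plan is to reduce the statement to the existing Andr\'e--Kahn machinery by spelling out exactly which of their results applies. The hypothesis that $\mathcal{N}(\C)=\mathrm{rad}(\C)$ means that $\C$ is what Andr\'e and Kahn call a category whose radical is ``monoidal'' and, more importantly, that passing to $\overline{\C}=\C/\mathcal{N}(\C)$ is the same as killing the radical, so $\overline{\C}$ is a semisimple category obtained from $\C$ by the canonical projection. The first step is to invoke Theorem \ref{karpiv} to know that $\overline{\C}$ is a semisimple tensor category and $\bold S$ is a monoidal functor, and then to observe that, since $\mathrm{rad}(\C)$ is the Jacobson radical in each $\mathrm{End}(X)$, the quotient functor $\bold S$ is \emph{radicial} in the sense of \cite{AK}: it is full, it induces a bijection on isomorphism classes of objects, and its kernel on morphisms is precisely the radical.

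The second step is to quote the relevant structural result from \cite{AK}. Their Theorem 12.1.1 produces, for a Karoubian category whose radical is a monoidal ideal, a monoidal section of the projection to the semisimplification at the level of additive monoidal categories, and their Theorem 13.2.1 upgrades this to compatibility with the pivotal (or rigid monoidal) structure. So the core of the argument is: verify that $\C$ satisfies the hypotheses of those two theorems (Karoubian with finite-dimensional Hom spaces, hence Krull--Schmidt; rigid monoidal with pivotal structure; and $\mathcal{N}(\C)=\mathrm{rad}(\C)$, which is the monoidality-of-the-radical condition), and then read off the existence of $\bold S^*:\overline{\C}\to\C$ with $\bold S\circ \bold S^*\cong\mathrm{Id}$ as a monoidal functor. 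One should also note, as in the symmetric case treated in Theorem \ref{splitting}, that $\bold S^*(\overline{X})\cong X$ for each indecomposable $X$, which is automatic because $\bold S^*$ is a section on objects and both categories have the same indecomposables.

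The step I expect to be the genuine obstacle — or at least the one requiring care rather than citation — is checking that the Andr\'e--Kahn hypothesis really is our hypothesis, i.e.\ that ``$\mathcal{N}(\C)=\mathrm{rad}(\C)$'' is equivalent to ``$\C$ has no nonzero indecomposable objects of zero dimension,'' and that this is what \cite{AK} needs. One direction is Lemma \ref{charneg}: a morphism between indecomposables is negligible iff it is not an isomorphism or one side has dimension zero, while it lies in $\mathrm{rad}(\C)$ iff it is not an isomorphism; so the two ideals agree on indecomposables exactly when no indecomposable has dimension zero, and then they agree in general by decomposing into indecomposable blocks. The other point is that \cite{AK} phrase their splitting theorem for ``wedderburn'' categories (those where the radical is a tensor ideal and the quotient is semisimple), and one must make sure the pivotal structure is preserved under their construction — this is the content of their \S13.2, and it is where the symmetric-vs-nonsymmetric distinction is shown to be irrelevant, as Heidersdorf pointed out. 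Beyond this bookkeeping the proof is a direct appeal to \cite{AK}, so the writeup is short.
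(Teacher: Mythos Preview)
Your proposal is correct and matches the paper's approach exactly: the paper's entire ``proof'' is the sentence preceding the theorem, namely that Theorems 12.1.1 and 13.2.1 of \cite{AK} immediately imply the result. You have identified the same two inputs and, in addition, spelled out the verification (via Lemma~\ref{charneg}) that $\mathcal{N}(\C)=\mathrm{rad}(\C)$ is equivalent to the absence of nonzero indecomposables of dimension zero---a point the paper states parenthetically in the theorem but does not argue.
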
 

\begin{corollary}\label{AK2} Let $\C$ be a Karoubian pivotal category as in Theorem \ref{karpiv}.
Then following conditions are equivalent: 

(i) Any indecomposable direct summand in a tensor product of two indecomposable objects
of nonzero dimension also has nonzero dimension; in other words, indecomposables of nonzero dimension 
span a full monoidal subcategory of $\C$.

(ii) The semisimplification functor $\bold S: \C\to \overline{\C}$ admits a monoidal 
splitting $\bold S^*:\overline{\C}\to \C$.
\end{corollary}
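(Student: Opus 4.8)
The plan is to show the two implications (i)$\Rightarrow$(ii) and (ii)$\Rightarrow$(i) separately, using Theorem \ref{AKth} as the main engine for the first and a direct functoriality argument for the second.

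For (i)$\Rightarrow$(ii), the key observation is that condition (i) is exactly what is needed to invoke Theorem \ref{AKth}, once we pass to an appropriate subcategory. First I would let $\C_0\subset\C$ be the full Karoubian subcategory whose objects are the direct sums of indecomposables of nonzero dimension (together with the zero object). Condition (i) says precisely that $\C_0$ is closed under tensor products, so $\C_0$ is a Karoubian pivotal monoidal subcategory of $\C$, still satisfying hypotheses (1) and (2) of Theorem \ref{karpiv}. Now the crucial point is that $\C_0$ has \emph{no} nonzero indecomposable objects of zero dimension, so by Lemma \ref{charneg} the ideal $\N(\C_0)$ of negligible morphisms coincides with the radical ${\rm rad}(\C_0)$: a morphism between indecomposables of nonzero dimension is negligible iff it is not an isomorphism, i.e.\ iff it lies in the radical. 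Hence Theorem \ref{AKth} applies to $\C_0$ and yields a monoidal splitting $\bold S_0^*:\overline{\C_0}\to\C_0$ of the semisimplification functor $\bold S_0:\C_0\to\overline{\C_0}$. Finally I would identify $\overline{\C_0}$ with $\overline{\C}$: the inclusion $\C_0\hookrightarrow\C$ induces a monoidal functor $\overline{\C_0}\to\overline{\C}$ which is fully faithful (both have the same Hom spaces by Proposition \ref{semisi}) and essentially surjective (every simple of $\overline{\C}$ is an indecomposable of $\C$ of nonzero dimension, hence lies in $\C_0$), so it is an equivalence. Composing $\bold S_0^*$ with the inclusion $\C_0\hookrightarrow\C$ and this equivalence gives the desired monoidal splitting $\bold S^*:\overline{\C}\to\C$, and $\bold S\circ\bold S^*\cong{\rm Id}$ since $\bold S_0$ extends to $\bold S$ on $\C_0$.

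For (ii)$\Rightarrow$(i), suppose a monoidal splitting $\bold S^*:\overline{\C}\to\C$ exists, with $\bold S\circ\bold S^*\cong{\rm Id}$. Let $X,Y\in\C$ be indecomposable of nonzero dimension, so $\overline X,\overline Y$ are simple objects of $\overline{\C}$, and $\overline X\otimes\overline Y\cong\bigoplus_k \overline Z_k$ with each $Z_k$ indecomposable of nonzero dimension in $\C$. Applying the monoidal functor $\bold S^*$ gives $\bold S^*(\overline X)\otimes\bold S^*(\overline Y)\cong\bigoplus_k\bold S^*(\overline Z_k)$ in $\C$. Applying $\bold S$ and using $\bold S\circ\bold S^*\cong{\rm Id}$ together with the fact that $\bold S$ is monoidal, we get $\overline{\bold S^*(\overline X)\otimes\bold S^*(\overline Y)}\cong\overline X\otimes\overline Y$; in particular $\bold S^*(\overline X)$ and $\bold S^*(\overline Y)$ each have nonzero dimension (since a monoidal functor preserves dimensions, $\dim\bold S^*(\overline X)=\dim\overline X\ne0$), and every indecomposable summand of $X\otimes Y=\bold S^*(\overline X)\otimes\bold S^*(\overline Y)$ that survives in $\overline{\C}$ has nonzero dimension. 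The remaining task is to rule out indecomposable summands of $X\otimes Y$ of zero dimension: but if $W$ is such a summand, then $\overline W=0$, and the whole argument shows $\overline{X\otimes Y}\cong\overline X\otimes\overline Y$ already accounts for all of $X\otimes Y$ up to negligibles—more precisely, one compares $\dim(X\otimes Y)=(\dim X)(\dim Y)=(\dim\bold S^*(\overline X))(\dim\bold S^*(\overline Y))=\dim(\bold S^*(\overline X)\otimes\bold S^*(\overline Y))$, and since $\bold S^*(\overline X)\otimes\bold S^*(\overline Y)$ is a direct sum of $\bold S^*$ of simples, all of nonzero dimension, comparing with $X\otimes Y$ forces $X\otimes Y\cong\bigoplus_k\bold S^*(\overline Z_k)$ with no extra zero-dimensional summands. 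Hence every indecomposable summand of $X\otimes Y$ has nonzero dimension, which is (i).

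I expect the main obstacle to be the bookkeeping in (ii)$\Rightarrow$(i): one must be careful that $\bold S^*$ need not send an indecomposable $\overline X$ to the \emph{specific} indecomposable $X$ we started with (the splitting is only required to be monoidal, not to satisfy $\bold S^*(\overline X)\cong X$ on the nose, unlike in Theorem \ref{splitting}), so the argument must be phrased entirely in terms of dimensions and the relation $\bold S\circ\bold S^*\cong{\rm Id}$ rather than identifying objects. A clean way around this is to note that $\bold S^*$ maps simples of $\overline{\C}$ to indecomposables of $\C$ of nonzero dimension and is injective on isomorphism classes (since postcomposing with $\bold S$ recovers the identity), so $\bold S^*$ identifies $\overline{\C}$ with the monoidal subcategory $\C_0$ of (i), and the equivalence is forced; the rest is then the dimension count above. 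The (i)$\Rightarrow$(ii) direction should be routine given Theorem \ref{AKth}, the only subtlety being the verification that $\C_0$ inherits the pivotal structure and that $\overline{\C_0}\cong\overline{\C}$.
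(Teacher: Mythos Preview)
Your argument for (i)$\Rightarrow$(ii) is correct and is exactly what the paper intends by its one-line ``It follows from Theorem \ref{AKth}'': one passes to the full subcategory $\C_0$ spanned by indecomposables of nonzero dimension, which is monoidal by (i) and has no zero-dimensional indecomposables, so Theorem \ref{AKth} applies; the identification $\overline{\C_0}\cong\overline{\C}$ is straightforward. You have filled in the details correctly.

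Your argument for (ii)$\Rightarrow$(i), however, has a genuine gap, and also an unnecessary worry. The gap is the dimension count: the equality $\dim(X\otimes Y)=\dim\bigl(\bigoplus_k\bold S^*(\overline{Z_k})\bigr)$ cannot rule out extra indecomposable summands of dimension zero in $X\otimes Y$, since such summands contribute nothing to the dimension. So the sentence ``comparing with $X\otimes Y$ forces $X\otimes Y\cong\bigoplus_k\bold S^*(\overline{Z_k})$ with no extra zero-dimensional summands'' does not follow.

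The unnecessary worry is your claim that $\bold S^*$ need not send $\overline X$ back to $X$. In fact it must: $\bold S^*(\overline X)$ is indecomposable (being sent by $\bold S$ to the simple $\overline X$), has nonzero dimension, and satisfies $\overline{\bold S^*(\overline X)}\cong\overline X$; but by Proposition \ref{semisi} distinct indecomposables of nonzero dimension give distinct simples in $\overline\C$, so $\bold S^*(\overline X)\cong X$. Once you accept this, the paper's argument is immediate and avoids dimensions entirely: if (i) fails, take indecomposables $X,Y$ of nonzero dimension with $X\otimes Y\cong Z\oplus T$, where $Z\ne 0$ is indecomposable of dimension zero. Then $\bold S(X)\otimes\bold S(Y)\cong\bold S(T)$; applying $\bold S^*$ (and using $\bold S^*\bold S(W)\cong W$ on each indecomposable summand of nonzero dimension) gives $X\otimes Y\cong T'$ where $T'$ is the direct sum of the nonzero-dimensional summands of $T$. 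Hence $Z\oplus T\cong T'$ with $T'$ a summand of $T$, contradicting Krull--Schmidt. This is the clean route you were reaching for in your last paragraph; the key is Krull--Schmidt, not a dimension comparison.
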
 

\begin{proof}
It follows from Theorem \ref{AKth} that (i) implies (ii). To prove that (ii) implies (i), assume that (i) fails, and let 
$X,Y$ be simple objects of nonzero dimension such that $X\otimes Y\cong Z\oplus T$ where $Z\ne 0$ is indecomposable and has dimension zero. Then $\bold S(X)\otimes \bold S(Y)\cong \bold S(T)$.
So if there is a monoidal splitting $\bold S^*$, then applying $\bold S^*$ to the last equality, we get $X\otimes Y\cong T$. 
Thus, $T\cong Z\oplus T$, which contradicts the Krull-Schmidt theorem. 
\end{proof} 

In particular, this implies existence of reductive envelopes for quantum groups. Namely, let $H$ be a Hopf algebra 
over a field $\k$ in which the squared antipode is given by conjugation by a character $\chi\in H^*$ (thereby defining a pivotal structure on ${\rm Corep}(H)$), 
with no indecomposable finite dimensional comodules $M\ne 0$ of zero dimension. 
Then the category ${\rm Corep}(H)$ satisfies the assumptions of Theorem \ref{AKth}. 
Therefore, we obtain 

\begin{proposition}\label{AK1} 
There exists a unique universal cosemisimple Hopf algebra $\overline{H}$ with a surjective homomorphism 
$s: \overline{H}\to H$ (in other words, any Hopf algebra homomorphism 
$H'\to H$ from a cosemisimple Hopf algebra $H'$ uniquely factors through $s$). Namely, ${\rm Corep}(\overline{H})=\overline{{\rm Corep}(H)}$ \end{proposition}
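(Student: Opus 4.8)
\textbf{Plan of proof for Proposition \ref{AK1}.}

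The plan is to deduce this statement by applying Tannakian/Tannaka-Krein reconstruction to the chain of functors produced by Theorem \ref{AKth}. First I would observe that under the stated hypotheses — the squared antipode of $H$ is implemented by conjugation by a character $\chi \in H^*$, and $\mathrm{Corep}(H)$ has no nonzero indecomposable finite-dimensional comodules of dimension zero — the category $\C := \mathrm{Corep}(H)$ of finite-dimensional $H$-comodules is a $\k$-linear Karoubian rigid monoidal category with finite-dimensional $\Hom$ spaces, equipped with a pivotal structure coming from $\chi$, satisfying conditions (1) and (2) of Theorem \ref{karpiv} (condition (1) holds because $\C$ embeds in the abelian category of all comodules, cf. Remark \ref{remhopf}(2), and condition (2) is automatic here since there are \emph{no} nonzero indecomposables of zero dimension). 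Moreover the hypothesis ``no nonzero indecomposable objects of zero dimension'' is precisely the assumption $\N(\C) = \mathrm{rad}(\C)$ of Theorem \ref{AKth}. Hence Theorem \ref{AKth} applies and gives a monoidal splitting $\bold S^*: \overline{\C} \to \C$ of the semisimplification functor $\bold S: \C \to \overline{\C}$, with $\bold S \circ \bold S^* \cong \mathrm{Id}$.

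Next I would set up the reconstruction. The forgetful functor $\omega: \C = \mathrm{Corep}(H) \to \Vec$ is an exact faithful $\k$-linear monoidal functor, and by Tannaka-Krein duality $H \cong \mathrm{coend}(\omega)$ (equivalently $H$ is recovered as the coalgebra of matrix coefficients). Composing, $\overline{\omega} := \omega \circ \bold S^*: \overline{\C} \to \Vec$ is a $\k$-linear monoidal functor out of the semisimple category $\overline{\C}$; it is exact and faithful because $\overline{\C}$ is semisimple and $\bold S^*$ is a tensor functor that is injective on objects. So $\overline{H} := \mathrm{coend}(\overline{\omega})$ is a $\k$-coalgebra carrying a Hopf algebra structure with $\mathrm{Corep}(\overline{H}) \simeq \overline{\C}$, and since $\overline{\C}$ is semisimple, $\overline{H}$ is cosemisimple. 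The tensor functor $\bold S: \C \to \overline{\C}$, compatible with the forgetful functors ($\overline{\omega}\circ\bold S = \omega$ since $\bold S^*\bold S\cong \mathrm{Id}$ and $\omega\circ\bold S^*\circ\bold S\cong\omega$), induces — by functoriality of coend — a Hopf algebra homomorphism $s: \overline{H} \to H$; and surjectivity of $s$ follows from the fact that $\bold S$ is essentially surjective and full onto a generating set (equivalently, $\bold S^*$ is faithful, so every matrix coefficient of $H$ is the image of one of $\overline H$). This establishes existence of the pair $(\overline H, s)$ with the asserted property $\mathrm{Corep}(\overline H) = \overline{\mathrm{Corep}(H)}$.

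For the universal property: given a cosemisimple Hopf algebra $H'$ and a homomorphism $f: H' \to H$, I would pass to corepresentation categories to obtain a tensor functor $F: \mathrm{Corep}(H') \to \mathrm{Corep}(H) = \C$ compatible with forgetful functors. Post-composing with $\bold S$ gives $\bold S \circ F: \mathrm{Corep}(H') \to \overline{\C}$, which is \emph{exact} because its source $\mathrm{Corep}(H')$ is semisimple (here is where cosemisimplicity of $H'$ is used, exactly as in the proof of Theorem \ref{ak}); hence it is an honest tensor functor compatible with fiber functors, and by Tannaka-Krein reconstruction it comes from a unique Hopf algebra homomorphism $g: H' \to \overline{H}$. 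That $s \circ g = f$ follows by applying $\bold S^*$ and using $\bold S^* \circ \bold S \circ F \cong F$ (again exactness of $F$ on a semisimple source, so no derived correction terms appear), then translating back to Hopf algebras; uniqueness of $g$ follows from faithfulness of the reconstruction functor. The main obstacle — and the point requiring the most care — is verifying that $\bold S$ and $\bold S^*$ genuinely descend to Hopf algebra (not merely bialgebra or coalgebra) morphisms and that all the compatibilities with forgetful functors hold strictly enough for Tannaka-Krein reconstruction to apply; this is where one must be careful that the pivotal structure on $\C$ coming from $\chi$ is compatible with the construction of Theorem \ref{AKth}, so that $\bold S^*$ is a genuine monoidal functor over $\Vec$ and the antipode is transported correctly. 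Everything else is a routine unwinding of the coend/reconstruction machinery applied to the output of Theorem \ref{AKth}, parallel to the proof of Theorem \ref{ak}.
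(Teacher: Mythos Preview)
Your overall strategy is exactly what the paper intends: its entire proof is the sentence ``The proof is analogous to the case when $H$ is commutative over $\Bbb C$ (i.e., the case of proalgebraic groups),'' and you have correctly unpacked this as ``apply Theorem \ref{AKth} to get the splitting $\bold S^*$, reconstruct $\overline{H}$ from the fiber functor $\omega\circ\bold S^*$, and rerun the proof of Theorem \ref{ak} with Tannaka--Krein for Hopf algebras in place of Tannakian duality.''

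There is, however, a directional slip in your construction of $s$. The Hopf algebra map $s:\overline{H}\to H$ is induced by the \emph{tensor functor} $\bold S^*:\overline{\C}\to\C$, not by $\bold S$. Recall that for a monoidal functor $F:\C_1\to\C_2$ with $\omega_2\circ F\cong\omega_1$, functoriality of the coend yields a coalgebra map $\mathrm{coend}(\omega_1)\to\mathrm{coend}(\omega_2)$; applying this to $\bold S^*$ (with $\overline{\omega}:=\omega\circ\bold S^*$ by definition) gives $s:\overline{H}\to H$ directly, with no compatibility to verify. By contrast, $\bold S$ is not a tensor functor (it is not exact), and the compatibility you wrote, $\overline{\omega}\circ\bold S=\omega$, is false on morphisms: $\bold S$ annihilates the radical $\N(\C)=\mathrm{rad}(\C)\ne 0$, while $\omega$ is faithful. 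Relatedly, the splitting condition is $\bold S\circ\bold S^*\cong\mathrm{Id}$, not $\bold S^*\circ\bold S\cong\mathrm{Id}$; the latter fails for the same reason. Once $s$ is produced from $\bold S^*$, its surjectivity follows because $\bold S^*$ is essentially surjective on objects (every indecomposable of $\C$ has nonzero dimension, hence is $\bold S^*(\overline X)$), so every matrix coefficient of $H$ lies in the image. The universal property argument you give (compose $F$ with $\bold S$, use semisimplicity of $\mathrm{Corep}(H')$ to get exactness, reconstruct $g$, then check $s\circ g=f$ via $\bold S^*\circ\bold S\circ F\cong F$) is correct and is precisely the analogue of the proof of Theorem \ref{ak}; here $\bold S^*\circ\bold S\circ F\cong F$ does hold because $F$ lands in direct sums of indecomposables, none of which are killed by $\bold S$.
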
 

The proof is analogous to the case when $H$ is commutative over $\Bbb C$ (i.e., the case of proalgebraic groups). 
Heuristically writing $G={\rm Spec}(H)$ and $\overline{G}={\rm Spec}(\overline{H})$, we may say that the reductive quantum group
$\overline{G}$ is the reductive envelope of the quantum group $G$. 

\begin{example}  Let $q$ be a transcendental number, or, more generally, a complex number which is not a root of any polynomial with positive integer coefficients, say a positive real number or its image under an automorphism of $\Bbb C$. Let $B_q$ 
be the quantum Borel subgroup of the quantum group $G_q$ attached to a simple complex algebraic group $G$. 
The category $\Rep(B_q)={\rm Corep}(O(B_q))$ has a pivotal structure given by the element $q^{2\rho}$ of the corresponding quantum enveloping algebra. Therefore, the dimension (both left and right) of any nonzero representation of $B_q$ is a Laurent polynomial in $q$ with positive integer coefficients. Thus, this dimension is nonzero. Hence Proposition \ref{AK1} applies, and $H:=O(B_q)$ is the quotient of some cosemisimple Hopf algebra $\overline{H}:=O(\overline{B_q})$ for some reductive quantum group $\overline{B}_q$, such that every representation of $B_q$ factors canonically through $\overline{B_q}$.  
\end{example}

\subsection{Compatibility of semisimplification with equivariantization}

Now let $\C$ be a tensor category and $L$ be a finite group acting on $\C$.
Let $\C^L$ be the $L$-equivariantization of $\C$ (\cite{EGNO}, Subsection 4.15). 
The following lemma is easy (\cite{EGNO}, Exercise 4.15.3). 

\begin{lemma}\label{equivar} If 
$$
1\to N\to G\to L\to 1
$$
is an exact sequence of groups then $L$ acts naturally on $\Rep_{\bold k} N$, and $(\Rep_{\bold k} N)^L\cong \Rep_{\bold k} G$. 
\end{lemma}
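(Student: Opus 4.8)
The plan is to construct the equivalence $(\Rep_{\bold k}N)^L \cong \Rep_{\bold k}G$ directly on objects and morphisms, exhibiting mutually inverse functors. First I would recall the data of an object of the equivariantization: it is a pair $(V,\{u_\ell\}_{\ell\in L})$, where $V\in \Rep_{\bold k}N$ and $u_\ell: \ell^*V \xrightarrow{\sim} V$ are isomorphisms satisfying the cocycle condition $u_{\ell_1}\circ \ell_1^*(u_{\ell_2}) = u_{\ell_1\ell_2}$ (with the normalization $u_1=\id$). Here the action of $L$ on $\Rep_{\bold k}N$ is the one induced by the exact sequence: lifting $\ell\in L$ to $\tilde\ell\in G$, the functor $\ell^*$ sends $V$ to the representation with underlying space $V$ and $N$-action $n\cdot v := (\tilde\ell^{-1}n\tilde\ell)v$; a different lift changes this by an inner automorphism of $N$, which is why one must be slightly careful, but the standard construction (\cite{EGNO}, Subsection 4.15) packages this coherently.

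Next I would define the functor $\Phi:(\Rep_{\bold k}N)^L \to \Rep_{\bold k}G$. Given $(V,\{u_\ell\})$, put a $G$-action on $V$ by declaring, for $g\in G$ mapping to $\ell\in L$, that $g$ acts as the composite of the isomorphism $u_\ell: \ell^*V\to V$ with the action of $g$ viewed as an element over $\ell$; concretely, fixing set-theoretic lifts and writing $g = n\tilde\ell$ with $n\in N$, one sets $g\cdot v := n\cdot(u_\ell(v))$. The cocycle condition on $\{u_\ell\}$ is exactly what is needed for this to be an honest action of $G$ (the associativity check $g_1\cdot(g_2\cdot v) = (g_1g_2)\cdot v$ unwinds to the equation $u_{\ell_1}\circ \ell_1^*(u_{\ell_2}) = u_{\ell_1\ell_2}$ together with the $N$-equivariance of the $u_\ell$), and restriction to $N\subset G$ recovers the original $V\in\Rep_{\bold k}N$. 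On morphisms, a morphism in $(\Rep_{\bold k}N)^L$ is an $N$-map commuting with all $u_\ell$, which is precisely a $G$-map, so $\Phi$ is fully faithful. Conversely, the functor $\Psi:\Rep_{\bold k}G \to (\Rep_{\bold k}N)^L$ restricts a $G$-representation $W$ to $N$ and equips it with $u_\ell := $ (action of the chosen lift $\tilde\ell$), which is an isomorphism $\ell^*W\to W$ because $\tilde\ell$ intertwines the $N$-action twisted by $\ad(\tilde\ell^{-1})$ with the original one; the cocycle condition holds because the $\tilde\ell$'s multiply in $G$ up to elements of $N$, which are absorbed into the equivariant structure. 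One checks $\Phi\Psi\cong\id$ and $\Psi\Phi\cong\id$ on the nose (or up to the canonical natural isomorphisms coming from the choice of lifts).

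Finally I would verify that $\Phi$ is monoidal: the tensor product in $(\Rep_{\bold k}N)^L$ is $(V,\{u_\ell\})\otimes(V',\{u'_\ell\}) = (V\otimes V', \{u_\ell\otimes u'_\ell\})$ with the diagonal $N$-action, and under $\Phi$ this matches the diagonal $G$-action on $V\otimes V'$, while the unit object $(\bold k,\{\id\})$ goes to the trivial $G$-representation; the associativity and unit constraints are inherited, so $\Phi$ is a tensor equivalence. The main obstacle — really the only subtle point — is bookkeeping with the non-canonical set-theoretic section $L\to G$ and the resulting inner-automorphism ambiguity in the definition of the $L$-action on $\Rep_{\bold k}N$; I would handle this by fixing once and for all a section $s:L\to G$ with $s(1)=1$, absorbing the discrepancies $s(\ell_1)s(\ell_2)s(\ell_1\ell_2)^{-1}\in N$ into the equivariance maps, and noting that a different section yields a naturally isomorphic $L$-action and hence an equivalent equivariantization, so the statement is independent of the choice. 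As the excerpt itself notes, this is \cite{EGNO}, Exercise 4.15.3, so I would keep the write-up brief and refer there for the coherence diagrams.
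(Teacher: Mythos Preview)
Your proposal is correct and is exactly the standard argument one gives for this exercise. The paper itself does not supply a proof: it simply states the lemma as ``easy'' and refers to \cite{EGNO}, Exercise~4.15.3, so there is nothing further to compare against.
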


Clearly, any action of $L$ on $\C$ descends to its action on the semisimplification $\overline{\C}$. 

\begin{proposition}\label{commu} If $|L|\ne 0$ in $\k$ and $L$ preserves the spherical structure of $\C$ then $L$-eqiuvariantization commutes with semisimplification. In other words, we have a natural equivalence of tensor categories $\overline{\C^L}\cong \overline{\C}^L$. 
\end{proposition}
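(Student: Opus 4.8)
The plan is to show that the two constructions $\C \mapsto \overline{\C^L}$ and $\C \mapsto \overline{\C}^L$ can be identified through the semisimplification functor. Both categories have the same underlying Karoubian data up to the quotient by negligibles, so the key is a careful comparison of morphism spaces and of the $L$-equivariant structure.

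First I would recall that an object of $\C^L$ is a pair $(X, \{u_g\}_{g\in L})$, where $X\in\C$ and $u_g\colon g(X)\xrightarrow{\sim} X$ is a coherent system of isomorphisms, and that $\Hom_{\C^L}((X,u),(Y,v))$ is the subspace of $L$-invariant morphisms in $\Hom_\C(X,Y)$, where $L$ acts by $\phi\mapsto v_g\circ g(\phi)\circ u_g^{-1}$. Since $|L|\neq 0$ in $\k$, the averaging idempotent $e=\frac{1}{|L|}\sum_{g\in L}(\text{this action})$ is available, so $\Hom_{\C^L}(\cdot,\cdot)$ is a direct summand of $\Hom_\C(\cdot,\cdot)$, and in particular $\C^L$ is again Karoubian with finite-dimensional Hom-spaces. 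Next, because $L$ preserves the spherical structure, for an $L$-morphism $f\colon (X,u)\to(Y,v)$ and an $L$-morphism $g\colon (Y,v)\to(X,u)$, the trace $\Tr(f\circ g)$ computed in $\C$ equals the categorical trace of the pair in $\C^L$; I would check that an arbitrary (not necessarily $L$-invariant) $g\colon Y\to X$ can be replaced, after averaging, by an $L$-invariant one without changing $\Tr(f\circ g)$ — this uses that $f$ is $L$-invariant and cyclicity of the trace, so $\Tr(f\circ g)=\Tr(f\circ e(g))$. Consequently, $f$ is negligible in $\C^L$ if and only if its image in $\C$ is negligible, i.e. $\N(\C^L) = \N(\C)\cap \mathrm{Hom}_{\C^L}$. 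This already gives a fully faithful tensor functor $\overline{\C^L}\to \overline{\C}^L$, since an object $(X,u)$ with $X$ of nonzero dimension maps to an object in $\overline{\C}^L$, and Hom-spaces match by the displayed identity of negligible ideals together with the exactness of averaging.

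It then remains to prove essential surjectivity: every object of $\overline{\C}^L$, i.e. every pair $(\overline{X}, \{\bar u_g\})$ with $\overline{X}$ a semisimple object of $\overline{\C}$ and $\bar u_g$ an equivariant structure in $\overline{\C}$, lifts to an object of $\C^L$ up to the quotient. The point is that by Proposition \ref{semisi} the simple objects of $\overline{\C}$ are precisely the images of the nonzero-dimensional indecomposables of $\C$, and the semisimplification functor $\bold S$ is essentially surjective on such objects; so $\overline{X}=\bold S(X)$ for some $X\in\C$ which is a direct sum of nonzero-dimensional indecomposables. One then has to promote the equivariant structure $\{\bar u_g\}$ on $\bold S(X)$ to a genuine equivariant structure on (a suitable $X$) in $\C$. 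Here I would use that $\Hom_{\overline\C}(g(X),X)$ is a \emph{quotient} of $\Hom_\C(g(X),X)$, lift each $\bar u_g$ to some $u_g\in\Hom_\C(g(X),X)$, and correct it: since $\bar u_g$ is an isomorphism and $\dim$ of the relevant indecomposables is nonzero, the lift $u_g$ is automatically an isomorphism by Lemma \ref{charneg} (a morphism between indecomposables that is nonzero modulo negligibles, hence not in the radical, is an iso). The cocycle condition $u_{gh}=u_g\circ g(u_h)$ will hold only modulo negligibles; I would fix this by averaging, replacing $\{u_g\}$ with a genuinely coherent system having the same image in $\overline{\C}$ — a standard $H^2$-vanishing argument over $\k$ with $|L|$ invertible — so that $(X,\{u_g\})\in\C^L$ maps to $(\overline X,\{\bar u_g\})$.

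\textbf{Main obstacle.} I expect the essential-surjectivity step, and specifically the rectification of the cocycle identity for the lifted equivariant structure, to be the delicate point: one must choose the lifts $u_g$ compatibly across all $g\in L$ so that the ambiguity (which a priori lives in the negligible part of the Hom-spaces) can be trivialized, and one should argue that the relevant obstruction vanishes because it is a 2-cocycle valued in a $\k[L]$-module with $|L|$ invertible in $\k$, hence a coboundary. Equivalently, one can phrase the whole comparison as an equivalence of categories of modules: $\C^L = \mathrm{Mod}_{\C}(\mathcal{O}(L))$ for the regular algebra object, semisimplification is exact on module categories over a separable (since $|L|\neq 0$) algebra, and $\overline{\mathrm{Mod}_\C(\mathcal{O}(L))}\cong \mathrm{Mod}_{\overline\C}(\overline{\mathcal O(L)}) = \mathrm{Mod}_{\overline\C}(\mathcal O(L)) = \overline{\C}^L$; I would likely present it this way to make the coherence bookkeeping automatic, with the sphericity hypothesis entering exactly where it is needed to identify the two negligible ideals.
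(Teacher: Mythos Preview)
Your first half is essentially the paper's argument: you use averaging (with $|L|$ invertible) to show that an $L$-invariant morphism is negligible in $\C^L$ iff it is negligible in $\C$, and deduce a fully faithful tensor functor $E\colon \overline{\C^L}\to\overline{\C}^L$. The paper does exactly this, phrased as ``$F(f)$ negligible for $f$ negligible,'' then factors through $\overline{\C}^L$.

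Where you diverge is essential surjectivity. The paper does \emph{not} lift equivariant structures through the quotient. Instead it invokes Clifford theory: every indecomposable of $\C^L$ has the form $\Ind_{L_Z}^L(\rho\otimes Z)$ with $Z\in\C$ indecomposable, $L_Z=\Stab_L(Z)$, and $\rho\in\Irr(L_Z)$; its image under $E$ is $\Ind_{L_{\overline Z}}^L(\rho\otimes\overline Z)$, which is simple because $\overline Z$ is simple and $L_{\overline Z}=L_Z$. Conversely every simple of $\overline{\C}^L$ is $\Ind_{L_V}^L(\rho\otimes V)$ with $V=\overline Z$ simple in $\overline\C$, and since $|L_V|\neq 0$ in $\k$ one has $\dim\rho\neq 0$, so this is $E$ of $\overline{\Ind_{L_Z}^L(\rho\otimes Z)}$. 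This matches simples bijectively and finishes in two sentences.

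Your route---lift each $\bar u_g$ to an isomorphism $u_g$ in $\C$ and then rectify the cocycle---is correct but costs more than you indicate. The defect $c_{g,h}=u_g\,g(u_h)\,u_{gh}^{-1}$ lives in $1+\mathrm{rad}\,\End(X)$, which is a \emph{non-abelian} unipotent group, so ``$H^2(L,-)=0$'' does not apply directly; one must filter by powers of the radical and kill the obstruction on each abelian graded piece $\mathrm{rad}^k/\mathrm{rad}^{k+1}$ (where the $L$-action is well-defined and $H^2$ does vanish), then iterate. This works because the radical is nilpotent, and it is indeed ``standard,'' but it is a genuine inductive argument, not a one-liner. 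Your module-theoretic reformulation via $\mathcal O(L)$ hides the same issue: one still has to prove that semisimplification commutes with passing to $A$-modules for separable $A$, and the content of that statement is again the compatibility of negligibles you already isolated plus the same lifting step. So the paper's Clifford-theoretic argument buys you a shorter and more transparent proof of surjectivity, while your approach, once the filtration argument is spelled out, has the virtue of not invoking the classification of indecomposables in $\C^L$.
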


\begin{proof} We have a natural forgetful functor $F: \C^L\to \C$. We claim that if $X,Y\in \C^L$ and 
$f: X\to Y$ is negligible, then $F(f)$ is negligible. Indeed, recall that $\Hom(F(X),F(Y))$ carries a natural action of $L$, and that $F$ defines an isomorphism $\Hom(X,Y)\cong \Hom(F(X),F(Y))^L$. Now let $h\in \Hom(F(Y),F(X))$, and let us show that $\Tr(F(f)\circ h)=0$. Let $\overline{h}=|L|^{-1}\sum_{\gamma\in L}\gamma(h)$. Then $\overline{h}=F(g)$ for a unique $g\in \Hom(X,Y)$. Thus, since $F(f)$ commutes with $L$ and the action of $L$ preserves traces, we have 
$$
\Tr(F(f)\circ h)=\Tr(F(f)\circ \overline{h})=\Tr(F(f)\circ F(g))=0,
$$
as desired. Thus, the functor $F$ descends to a tensor functor $\overline{F}: \overline{\C^L}\to \overline{\C}$. 
Moreover, for any $T\in \overline{\C^L}$ the object $\overline{F}(T)$ has a natural structure of an $L$-equivariant object (coming from that of $T$), 
so the functor $\overline{F}$ factors naturally through a tensor functor $E:  \overline{\C^L}\to \overline{\C}^L$. 

Suppose $T\in \overline{\C^L}$ is simple. Then $T=\overline{X}$, where $X\in \C^L$ is an indecomposable object of nonzero dimension. 
Thus $X=\Ind_{L_Z}^L(\rho\otimes Z)$, where $Z$ is an indecomposable object of $\C$ of nonzero dimension, $L_Z$ is the stabilizer 
of $Z$ in $L$, and $\rho$ is an irreducible representation of $L_Z$ over $\k$. Then 
$E(T)=\Ind_{L_Z}^L(\rho\otimes \overline{Z})$. Thus, $E(T)$ is simple (since so is $\overline{Z}$, and $L_{\overline{Z}}=L_Z$). 

It remains to show that $E$ is essentially surjective, i.e. every simple object of $\overline{\C}^L$ 
is of the form $E(T)$. To this end, note that every simple object of $\overline{\C}^L$ has the form 
$W=\Ind_{L_V}^L(\rho\otimes V)$, where $V=\overline{X}$ is a simple object of $\overline{\C}$ and $\rho$ is an irreducible representation of 
$L_V$. Since $|L_V|\ne 0$ in $\k$, we have $\dim \rho\ne 0$ in $\k$. Hence, $W=E(T)$, where $T=\Ind_{L_V}^L(\rho\otimes X)$
is a simple object of $\overline{\C^L}$ (as $L_V=L_X$). The proposition is proved. 
\end{proof} 

\begin{corollary}\label{commu2} In the setup of Lemma \ref{equivar} assume that $|L|\ne 0$ in $\k$. Then $\overline{\Rep_\k G}\cong \overline{\Rep_\k N}^L$.
\end{corollary}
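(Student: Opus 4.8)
The plan is to derive this immediately from the two results just established. By Lemma~\ref{equivar}, the exact sequence $1\to N\to G\to L\to 1$ gives $L$ a natural action on $\Rep_\k N$ together with an equivalence $(\Rep_\k N)^L\cong \Rep_\k G$ of tensor categories. Applying semisimplification to both sides, we get $\overline{\Rep_\k G}\cong \overline{(\Rep_\k N)^L}$, since equivalent tensor categories have equivalent semisimplifications (the construction of $\overline{\C}$ is canonical, depending only on the monoidal structure and the spherical structure).

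Next I would invoke Proposition~\ref{commu} with $\C=\Rep_\k N$. To apply it I need to check its two hypotheses: that $|L|\ne 0$ in $\k$, which is exactly the assumption made in the statement of the corollary; and that $L$ preserves the spherical structure of $\C=\Rep_\k N$. The latter holds because the action of $L$ on $\Rep_\k N$ coming from Lemma~\ref{equivar} is induced by (outer) automorphisms of $N$, which permute the simple objects and respect tensor products, duals, and the canonical (trivial-grouplike) spherical structure of a representation category of a finite group; hence the natural isomorphisms implementing the $L$-action are compatible with evaluation and coevaluation. With both hypotheses verified, Proposition~\ref{commu} yields a natural equivalence of tensor categories $\overline{(\Rep_\k N)^L}\cong \overline{\Rep_\k N}^{\,L}$.

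Combining the two displayed equivalences gives $\overline{\Rep_\k G}\cong \overline{(\Rep_\k N)^L}\cong \overline{\Rep_\k N}^{\,L}$, which is the assertion of Corollary~\ref{commu2}.

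There is essentially no obstacle here: the corollary is a formal consequence of Lemma~\ref{equivar} and Proposition~\ref{commu}, the only point requiring a word of justification being that the $L$-action from Lemma~\ref{equivar} preserves the spherical structure, which is clear since it arises from automorphisms of $N$. If one wanted to be completely careful, the single genuine check is that the equivalence $(\Rep_\k N)^L\cong \Rep_\k G$ is \emph{spherical} (not merely monoidal), so that both sides have literally the same negligible morphisms; this is immediate because the spherical structures on both categories are the standard ones and the equivalence identifies them.
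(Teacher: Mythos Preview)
Your proof is correct and follows exactly the same route as the paper, which simply states that the corollary follows from Proposition~\ref{commu} and Lemma~\ref{equivar}. Your additional remarks verifying that the $L$-action preserves the spherical structure and that the equivalence of Lemma~\ref{equivar} is spherical are reasonable elaborations of hypotheses the paper leaves implicit.
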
 
 
 \begin{proof} This follows from Proposition \ref{commu} and Lemma \ref{equivar}. 
 \end{proof}
 
\begin{remark} Similarly to Theorem \ref{karpiv}, Proposition \ref{commu} and its proof generalizes to Karoubian pivotal categories 
satisfying the assumptions of Theorem \ref{karpiv}. 
\end{remark} 

\subsection{Compatibility of negligible morphisms with surjective tensor functors}

Let $\C,\D$ be finite spherical tensor categories (\cite{EGNO}, Section 6), and $F: \C\to \D$ a surjective tensor functor (\cite{EGNO}, Subsection 6.3). Let $I: \D\to \C$ be the right adjoint of $F$. Note that $I$ is exact since $F$ maps projectives to projectives, 
(\cite{EGNO}, Theorem 6.1.16). 

\begin{definition} The {\it index} of $F$ is $d:=\dim I(\bold 1)$. 
\end{definition} 

\begin{definition} Let us say that $I$ is {\it dimension-scaling} if $\dim I(V)=d\dim V$ for all $V\in \D$. 
\end{definition} 

\begin{proposition}\label{negli} If $F$ has a nonzero index and $I$ is dimension-scaling then 

(i)  $\dim F(Y)=\dim Y$ for all $Y\in \C$; 

(ii) for any negligible morphism $f$ in $\C$, 
the morphism $F(f)$ is negligible in $\D$. 
\end{proposition}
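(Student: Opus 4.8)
The two parts are linked: once (i) is known, (ii) follows by a short trace-chasing argument, so the bulk of the work is establishing the dimension identity in (i). The plan is to exploit the adjunction $(F,I)$ together with the hypothesis that $I$ is dimension-scaling and $d=\dim I(\mathbf 1)\neq 0$.

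\emph{Step 1: Establish (i).} First I would note that since $F$ is a tensor functor it preserves dimensions of objects that are in its image up to the obvious normalization, but the subtlety here is that $F$ need not be an equivalence. The key is the projection-formula-type isomorphism $I(V)\cong I(\mathbf 1)\otimes Y$ is \emph{not} what we have; rather, for $Y\in\C$ there is a natural morphism $Y\to I F(Y)$ (the unit of the adjunction) and $FI(V)\to V$ (the counit). I would instead argue as follows: by the projection formula for the surjective tensor functor $F$ (\cite{EGNO}, and using that $I$ is a module functor over $\C$ via $F$), one has $I(F(Y))\cong Y\otimes I(\mathbf 1)$ for all $Y\in\C$. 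Taking dimensions and using that $I$ is dimension-scaling, $\dim F(Y)\cdot d=\dim I(F(Y))=\dim\bigl(Y\otimes I(\mathbf 1)\bigr)=\dim Y\cdot d$. Since $d\neq 0$ in $\k$, we may cancel to get $\dim F(Y)=\dim Y$. I expect the main obstacle to be pinning down the precise form of the projection formula $I(F(Y))\cong Y\otimes I(\mathbf 1)$ and checking it is an isomorphism of objects in $\C$ (not merely that the two sides have the same class in the Grothendieck ring); this uses that $F$ maps projectives to projectives, that $I$ is exact, and the Frobenius/module structure of $I$, all of which are standard for surjective tensor functors between finite tensor categories.

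\emph{Step 2: Deduce (ii) from (i).} Suppose $f:X\to Y$ is negligible in $\C$; I want to show $F(f)$ is negligible in $\D$, i.e. $\Tr(F(f)\circ h)=0$ for every $h:F(Y)\to F(X)$ in $\D$. By the adjunction, $h$ corresponds to a morphism $\tilde h: F(Y)\to F(X)$, and I would lift it using the counit: the composite $F\bigl(I(F(X))\bigr)\to F(X)$ together with $I$ being a section-like right adjoint lets me write $\Tr_\D(F(f)\circ h)$ in terms of a trace in $\C$. Concretely, consider $g:=(\text{counit})\circ I(h)\circ(\text{unit}): Y\to I F(Y)\to I F(X)\to X$, a morphism in $\C$. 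Using naturality of the adjunction and the fact (Proposition 4.7.x of \cite{EGNO}, cf. the behavior of traces under adjoint functors with nonzero index) that $\Tr_\D(F(\alpha))=\tfrac{1}{?}\,\dots$ — more precisely, using that $F$ is a spherical (pivotal) tensor functor so $\Tr_\D(F(\beta))=\Tr_\C(\beta)$ for endomorphisms $\beta$ in $\C$ — one gets $\Tr_\D(F(f)\circ h)=\Tr_\D(F(f\circ g'))$ for a suitable $g':Y\to X$ in $\C$, which vanishes because $f$ is negligible. The role of (i) here is to guarantee that the dimension bookkeeping in translating $\Tr_\D$ to $\Tr_\C$ is consistent and that no spurious scalar (which could be zero or could fail to cancel) appears.

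\emph{Remark on the hard part.} The genuinely delicate point is Step 1, and specifically whether one should invoke the projection formula $I(F(Y))\cong Y\otimes I(\mathbf 1)$ or instead argue more directly. An alternative route for (i): for any $Y$, pick a surjection in $\D$ onto objects in the image and use exactness of $I$ plus additivity of $\dim$ on exact sequences to reduce to the case $Y=\mathbf 1$, where $\dim F(\mathbf 1)=\dim\mathbf 1=1=\dim\mathbf 1$ trivially — but bootstrapping from $\mathbf 1$ to general $Y$ again essentially needs the module structure of $I$. I would present the projection-formula argument as the cleanest, citing \cite{EGNO} for the existence and exactness of $I$ and for the Frobenius property of surjective tensor functors, and then Step 2 is a routine (if slightly fiddly) trace computation.
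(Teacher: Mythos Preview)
Your Step 1 is essentially the paper's argument: the paper establishes the projection formula $I(F(Y))\cong I(\mathbf 1)\otimes Y$ directly by a Yoneda computation
\[
\Hom(X,I(F(Y)))\cong\Hom(F(X),F(Y))\cong\Hom(F(X\otimes Y^*),\mathbf 1)\cong\Hom(X,I(\mathbf 1)\otimes Y),
\]
and then divides $d\dim F(Y)=\dim I(F(Y))=\dim(I(\mathbf 1)\otimes Y)=d\dim Y$ by $d$. So there is no problem here.

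Your Step 2, however, has a genuine gap. You try to manufacture a morphism $g':Y\to X$ in $\C$ with $\Tr_\D(F(f)\circ h)=\Tr_\D(F(f\circ g'))$. For that you would essentially need $h=F(g')$, but a surjective tensor functor is not full on $\Hom$-spaces, so such a lift need not exist. Your proposed candidate $g=(\text{counit})\circ I(h)\circ(\text{unit})$ does not type-check: the counit of the adjunction $(F,I)$ is a map $FI\to\mathrm{Id}_\D$, not a map $IF(X)\to X$ in $\C$, so you have no morphism $IF(X)\to X$ to finish the composite. Invoking $\Tr_\D(F(\beta))=\Tr_\C(\beta)$ only helps once the endomorphism is already in the image of $F$, which is exactly what is not guaranteed.

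The paper avoids lifting altogether and instead pushes everything to $\C$ via $I$. From the projection-formula isomorphism one sees $I(F(f))$ corresponds to $\mathrm{Id}_{I(\mathbf 1)}\otimes f$, so $I(F(f))$ is negligible in $\C$. Then one proves a trace-scaling lemma: for any endomorphism $h:V\to V$ in $\D$, $\Tr(I(h))=d\,\Tr(h)$ (reduce to $h=\lambda\cdot\mathrm{Id}+h_0$ with $h_0$ nilpotent and use the dimension-scaling hypothesis). Combining these,
\[
d\,\Tr(F(f)\circ g)=\Tr\bigl(I(F(f)\circ g)\bigr)=\Tr\bigl(I(F(f))\circ I(g)\bigr)=0,
\]
and dividing by $d\neq 0$ finishes (ii). The moral is: rather than trying to pull $h$ back along $F$, push it forward along $I$ and use that $I$ scales traces.
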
  

\begin{proof} We have a functorial isomorphism $\varepsilon_Y: I(F(Y))\to I(\bold 1)\otimes Y$. Indeed, 
$$
\Hom(X,I(F(Y)))=\Hom(F(X),F(Y))=\Hom(F(X)\otimes F(Y)^*,\bold 1)=
$$
$$
\Hom(F(X\otimes Y^*),\bold 1)=
\Hom(X\otimes Y^*,I(\bold 1))=
\Hom(X,I(\bold 1)\otimes Y). 
$$
Since $I$ is dimension-scaling, we have 
$$
d\dim F(Y)=\dim I(F(Y))=\dim (I(\bold 1)\otimes Y)=d \dim Y.
$$
Since $d\ne 0$, this implies (i). 

Now let us prove (ii). For this, note that if $f: X\to Y$ is a morphism in $\C$ then $\varepsilon_Y\circ I(F(f))\circ \varepsilon_X^{-1}={\rm Id}_{I(\bold 1)}\otimes f$. 
Hence the morphism $I(F(f))$ is negligible. 

\begin{lemma}\label{scaletrace} If $h: V\to V$ is a morphism in $\D$ then one has $\Tr (I(h))=d\Tr (h)$. 
\end{lemma} 

\begin{proof} By decomposing $V$ into generalized eigenobjects of $h$, we may assume that $h$ has a single eigenvalue $\lambda$. 
Then $h=\lambda{\rm Id}+h_0$, where $h_0$ is nilpotent, so $I(h)=\lambda {\rm Id}+I(h_0)$. Since $I(h_0)$ is nilpotent, the desired statement reduces to the identity $\dim I(V)=d\dim V$ for all $V\in \D$, which holds since $I$ is dimension-scaling. 
\end{proof} 

Now let $g: F(Y)\to F(X)$ be a morphism. Then by Lemma \ref{scaletrace}, 
$$
d\Tr(F(f)\circ g)=\Tr(I(F(f)\circ g))=\Tr(I(F(f))\circ I(g)).
$$
But this is zero, since $I(F(f))$ is negligible. Since $d\ne 0$, this implies that $\Tr(F(f)\circ g)=0$, i.e., $F(f)$ is negligible, yielding (ii). 
\end{proof} 

Proposition \ref{negli} immediately implies 

\begin{corollary}\label{desc} 
If $F$ has a nonzero index and $I$ is dimension-scaling then 
$F$ descends to a tensor functor $\overline{F}: \overline{\C}\to \overline{\D}$. 
\end{corollary}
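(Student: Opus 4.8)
The plan is to deduce Corollary~\ref{desc} directly from Proposition~\ref{negli}(ii) together with the basic functoriality of the semisimplification construction established in Section~2. Recall that $\overline{\C}=\C/\N(\C)$ and $\overline{\D}=\D/\N(\D)$ are the quotients of $\C$ and $\D$ by the tensor ideals of negligible morphisms, and that each comes with a canonical quotient monoidal functor $\bold S_\C:\C\to\overline{\C}$ and $\bold S_\D:\D\to\overline{\D}$. So the goal is to produce a monoidal functor $\overline{F}$ making the square
\[
\begin{CD}
\C @>F>> \D\\
@V\bold S_\C VV @VV\bold S_\D V\\
\overline{\C} @>\overline{F}>> \overline{\D}
\end{CD}
\]
commute, and to check that $\overline{F}$ inherits the tensor structure of $F$.

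First I would invoke Proposition~\ref{negli}(ii): since $F$ has nonzero index and $I$ is dimension-scaling, $F$ sends negligible morphisms in $\C$ to negligible morphisms in $\D$, i.e.\ $F(\N(\C))\subseteq\N(\D)$. Concretely this means that for every $X,Y\in\C$ the linear map $F:\Hom_\C(X,Y)\to\Hom_\D(FX,FY)$ carries the subspace $\N(\C)(X,Y)$ into $\N(\D)(FX,FY)$. Hence the composite $\bold S_\D\circ F:\C\to\overline{\D}$ kills every negligible morphism. By the universal property of the quotient by a tensor ideal (Subsection~2.1 of the excerpt: the quotient functor $\C\to\C/I$ is initial among monoidal functors annihilating $I$), the functor $\bold S_\D\circ F$ factors uniquely as $\overline{F}\circ\bold S_\C$ for a unique additive functor $\overline{F}:\overline{\C}\to\overline{\D}$ that is the identity on objects (recall both quotient categories have the same objects as $\C$, resp.\ $\D$, and $\overline{F}$ acts on objects as $F$ does).

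Next I would equip $\overline{F}$ with a monoidal structure. The tensor product on $\overline{\C}$ and $\overline{\D}$ is induced from that on $\C$ and $\D$ (well-defined because $\N$ is a tensor ideal, condition (2) in Subsection~2.1), and the quotient functors $\bold S_\C,\bold S_\D$ are monoidal. The natural isomorphisms $J_{X,Y}:F(X)\otimes F(Y)\xrightarrow{\ \sim\ }F(X\otimes Y)$ and $F(\bold 1)\xrightarrow{\ \sim\ }\bold 1$ of the tensor functor $F$ are morphisms in $\D$; pushing them forward along $\bold S_\D$ gives isomorphisms in $\overline{\D}$ which, by the factorization above and the identification of objects, are exactly the required constraints $\overline{F}(X)\otimes\overline{F}(Y)\cong\overline{F}(X\otimes Y)$ and $\overline{F}(\bold 1)\cong\bold 1$. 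The hexagon and unit axioms for $(\overline{F},\overline{J})$ hold because they are the images under $\bold S_\D$ of the corresponding commutative diagrams for $(F,J)$, and $\bold S_\D$ is a (monoidal) functor; naturality of $\overline{J}$ in $\overline{\C}$ follows from naturality of $J$ in $\C$ together with the fact that every morphism of $\overline{\C}$ is the image of a morphism of $\C$. Thus $\overline{F}$ is a monoidal functor $\overline{\C}\to\overline{\D}$, and by construction $\bold S_\D\circ F\cong\overline{F}\circ\bold S_\C$ as monoidal functors.

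I do not expect a serious obstacle here: the only substantive input is Proposition~\ref{negli}(ii), which is already proved, and everything else is a routine application of the universal property of quotient monoidal categories recalled in Subsection~2.1. The one point demanding a little care is the verification that the induced tensor constraints on $\overline{F}$ are natural and satisfy the coherence axioms; this is handled, as indicated, by observing that all the relevant diagrams in $\overline{\D}$ are images under the monoidal quotient functor $\bold S_\D$ of diagrams known to commute in $\D$, and that $\bold S_\C$ is essentially surjective and full, so testing naturality on objects and morphisms of $\overline{\C}$ reduces to testing it in $\C$.
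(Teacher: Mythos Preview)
Your proposal is correct and follows exactly the approach the paper takes: the paper's entire proof is the single sentence ``Proposition~\ref{negli} immediately implies,'' and your argument is a detailed unpacking of what that implication amounts to (factoring through the quotient by the tensor ideal $\N(\C)$ using Proposition~\ref{negli}(ii), then observing that the monoidal structure descends). One small addition you could mention, in case the convention for ``tensor functor'' includes faithfulness, is that $\overline{F}$ is automatically faithful: if $X\in\overline{\C}$ is simple then it comes from an indecomposable in $\C$ of nonzero dimension, and Proposition~\ref{negli}(i) gives $\dim F(X)=\dim X\ne 0$, so $\overline{F}(X)\ne 0$.
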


Now let $H$ be an involutive finite dimensional Hopf algebra over any algebraically closed field $\k$ (i.e., $S^2=\Id$, where $S$ is the antipode of $H$), and $K$ be a Hopf subalgebra in $H$; for example, $H$ is cocommutative (e.g., a group algebra). Then $\C=\Rep H$ and $\D=\Rep K$ are finite spherical tensor categories, where dimensions are the usual dimensions (projected to $\k$). Restriction from $H$ to $K$ defines a surjective tensor functor $F: \C\to \D$. Let $I: \Rep(K)\to \Rep(H)$ be the right adjoint to this functor, i.e., the induction functor, 
$I(V)=\Hom_K(H,V)$. 

Recall that by the Nichols-Zoeller theorem \cite{NZ}, $H$ is a free $K$-module, of some rank $d$.

\begin{corollary}\label{invo} Assume that $d\ne 0$ in $\k$. Then any negligible morphism $f: X\to Y$ of $H$-modules is also negligible as a morphism of $K$-modules. Thus, $F$ defines a tensor functor: $\overline{\Rep H}\to \overline{\Rep K}$.   
\end{corollary}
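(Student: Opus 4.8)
The plan is to deduce Corollary \ref{invo} directly from Proposition \ref{negli} and Corollary \ref{desc} by verifying their two hypotheses: that $F$ has nonzero index and that the right adjoint $I$ is dimension-scaling. Since $\C=\Rep H$ and $\D=\Rep K$ with $F$ the restriction functor and $I(V)=\Hom_K(H,V)$ the induction functor, and since $H$ is involutive (so both categories are spherical with dimension given by the ordinary vector-space dimension, reduced mod the characteristic), everything comes down to a computation of dimensions of induced modules.

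First I would recall that by the Nichols--Zoeller theorem \cite{NZ}, $H$ is free as a left (equivalently, by applying the antipode, as a right) $K$-module of some finite rank $d$, i.e.\ $H\cong K^{\oplus d}$ as a $K$-module; numerically $d=\dim_\k H/\dim_\k K$. Then for any $V\in\Rep K$ we have $I(V)=\Hom_K(H,V)\cong\Hom_K(K^{\oplus d},V)\cong V^{\oplus d}$ as a $\k$-vector space, so $\dim I(V)=d\dim V$. In particular $\dim I(\bold 1)=d$, which by hypothesis is nonzero in $\k$, so the index of $F$ is nonzero; and the displayed identity $\dim I(V)=d\dim V$ is exactly the statement that $I$ is dimension-scaling.

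With both hypotheses of Proposition \ref{negli} verified, part (ii) of that proposition gives that $F(f)$ is negligible in $\Rep K$ whenever $f$ is negligible in $\Rep H$, and Corollary \ref{desc} then yields the induced tensor functor $\overline{\Rep H}\to\overline{\Rep K}$, which is the assertion. One small point to check along the way is that the pivotal/spherical structures on $\Rep H$ and $\Rep K$ are compatible with $F$, i.e.\ $F$ is a spherical (not merely monoidal) functor: this is immediate because $S^2=\Id$ forces the canonical pivotal structure on both sides to be the trivial one, and $F$ is the identity on underlying vector spaces, so it intertwines the evaluation and coevaluation maps.

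The only genuine content is the Nichols--Zoeller freeness input; everything else is bookkeeping. So the main (and only real) obstacle is invoking that theorem correctly --- in particular being careful that freeness as a one-sided $K$-module is what is needed to identify $\Hom_K(H,-)$ with a multiple of the identity functor on the level of vector spaces, and that the same integer $d$ governs $\dim I(\bold 1)$ and the general dimension-scaling identity. Since the proof of the corollary is this short, I would simply write out the two-line verification of the hypotheses and cite Proposition \ref{negli} and Corollary \ref{desc}.
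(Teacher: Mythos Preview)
Your proposal is correct and follows essentially the same approach as the paper: verify that $I$ is dimension-scaling via Nichols--Zoeller freeness (so $\dim I(V)=d\dim V$ and the index is $d\ne 0$), then invoke Proposition~\ref{negli}. The paper's proof is the one-line version of exactly this; your additional remarks on compatibility of the spherical structures are harmless elaborations.
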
  

\begin{proof} 
We have $\dim I(V)=d\dim V$, i.e., $I$ is dimension-scaling. Thus, the result follows from Proposition \ref{negli}.
\end{proof} 

\section{Semisimplification of representation categories of finite groups in characteristic $p$.} 

\subsection{The result} 
Let ${\rm char}(\k)=p>0$. Let $G$ be a finite group, and $P$ be a Sylow $p$-subgroup of $G$. Let $N_G(P)$ be the normalizer of $P$ in $G$. Since $[G:N_G(P)]\ne 0$ in $\k$, Corollary \ref{invo} implies 

\begin{proposition} \label{neglres} Let $f: X\to Y$ be a negligible morphism of $G$-modules. Then $f$ is negligible as a morphism 
of $N_G(P)$-modules. Thus, the restriction functor $F: \Rep_\k G\to \Rep_\k N_G(P)$ descends to a tensor functor 
between semisimplifications $\overline{F}: \overline{\Rep_\k G}\to \overline{\Rep_\k N_G(P)}$.
\end{proposition}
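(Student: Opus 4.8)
The plan is to apply Corollary \ref{invo} directly, once we verify its hypothesis. The category $\C = \Rep_\k G$ and $\D = \Rep_\k N_G(P)$ are finite spherical tensor categories (the pivotal/spherical structure being the trivial one, since group algebras are cocommutative and hence involutive), and restriction of representations from $G$ to $N_G(P)$ is a surjective tensor functor $F : \C \to \D$ whose right adjoint is the induction functor $I(V) = \Hom_{\k N_G(P)}(\k G, V) \cong \k G \otimes_{\k N_G(P)} V$. So the only thing to check is that $\k G$ is free as a $\k N_G(P)$-module and that its rank $d = [G : N_G(P)]$ is nonzero in $\k$.

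First I would note that $\k G$ is free as a right (equivalently left) $\k N_G(P)$-module of rank $[G : N_G(P)]$: this is immediate from the coset decomposition $G = \bigsqcup_i g_i N_G(P)$, which exhibits $\k G = \bigoplus_i g_i \cdot \k N_G(P)$, so one does not even need the Nichols--Zoeller theorem in this group-algebra case. Hence $I$ is dimension-scaling with index $d = [G : N_G(P)]$. Next I would invoke the standard fact that a Sylow $p$-subgroup $P$ is self-normalizing modulo $p$ in the sense that $[G : N_G(P)] \equiv [G : P] / [N_G(P) : P] \pmod p$, and since $p \nmid [G : P]$ we get $p \nmid [G : N_G(P)]$, i.e. $d \neq 0$ in $\k$. (Equivalently: $[G:N_G(P)]$ divides $[G:P]$, which is prime to $p$.)

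With $d \neq 0$ in $\k$ established, Corollary \ref{invo} applies verbatim and gives both conclusions at once: any negligible morphism of $G$-modules is negligible as a morphism of $N_G(P)$-modules, and consequently $F$ carries the tensor ideal $\N(\Rep_\k G)$ into $\N(\Rep_\k N_G(P))$, so it descends to a tensor functor $\overline{F} : \overline{\Rep_\k G} \to \overline{\Rep_\k N_G(P)}$ between the semisimplifications.

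There is essentially no obstacle here — the statement is a direct specialization of Corollary \ref{invo}. The only point requiring a moment's thought is the elementary group-theoretic estimate $p \nmid [G : N_G(P)]$; everything else is bookkeeping about which general result to cite and verifying that the hypotheses (finite spherical tensor categories, surjective tensor functor, dimension-scaling right adjoint with nonzero index) are met in this concrete situation.
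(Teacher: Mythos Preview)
Your proposal is correct and follows exactly the paper's approach: the paper simply notes that $[G:N_G(P)]\ne 0$ in $\k$ and invokes Corollary~\ref{invo}. You have supplied the (elementary) justification for why $p\nmid [G:N_G(P)]$ and why $\k G$ is free over $\k N_G(P)$, but otherwise the argument is identical.
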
  

Our main result in this section is the following theorem.

\begin{theorem}\label{equiv} The functor $\overline{F}$ in Proposition \ref{neglres} is an equivalence of tensor categories. 
\end{theorem}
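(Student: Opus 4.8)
The plan is to show that $\overline{F}$ is fully faithful and essentially surjective by exhibiting an explicit quasi-inverse built from the Green correspondence. The key observation is that both semisimplifications $\overline{\Rep_\k G}$ and $\overline{\Rep_\k N_G(P)}$ have, as simple objects, the indecomposable modules of nonzero dimension, so it suffices to match these up compatibly with tensor products and morphism spaces. First I would recall the Green correspondence: there is a bijection between indecomposable $\k G$-modules with vertex contained in $P$ and indecomposable $\k N_G(P)$-modules with vertex contained in $P$, sending an indecomposable $\k G$-module $M$ to the unique (up to iso) indecomposable summand $f(M)$ of $\Res^G_{N_G(P)} M$ with vertex equal to that of $M$; moreover $\Res^G_{N_G(P)} M \cong f(M) \oplus (\text{modules with smaller vertex})$. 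The point is that an indecomposable module has nonzero dimension in $\k$ if and only if it is projective relative to no proper subgroup "seeing" the defect — more precisely, I claim an indecomposable $M$ has $\dim M \neq 0$ in $\k$ exactly when its vertex is all of $P$ (this is the standard fact that $p \mid \dim M$ iff $P$ is not a vertex of $M$, equivalently $M$ is not a direct summand of a module induced from a proper subgroup of $P$ up to... let me be careful: $\dim M \not\equiv 0$ iff the vertex of $M$ is a Sylow $p$-subgroup).

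Second I would show that $\overline{F}$ sends the simple object $\overline{M}$ (for $M$ indecomposable of nonzero dimension over $G$) to $\overline{f(M)}$. Indeed $\Res^G_{N_G(P)} M = f(M) \oplus M'$ where every indecomposable summand of $M'$ has vertex a proper subgroup of $P$, hence has dimension divisible by $p$, hence dimension zero in $\k$, hence is killed in $\overline{\Rep_\k N_G(P)}$; so $\overline{F}(\overline{M}) = \overline{\Res^G_{N_G(P)} M} = \overline{f(M)}$. Since $f$ is a bijection between indecomposables of full vertex over $G$ and over $N_G(P)$, and these are exactly the simples of the two semisimplifications, $\overline{F}$ induces a bijection on isomorphism classes of simple objects; this immediately gives essential surjectivity and also full faithfulness on Hom-spaces between simples (both sides being $\k$ in the matching case and $0$ otherwise, by Proposition \ref{semisi} applied to each category). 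Since both categories are semisimple, full faithfulness on simples gives full faithfulness in general.

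Third — and this is where the main work lies — I must check that $\overline{F}$ is a \emph{tensor} equivalence, i.e. that the bijection on simples is compatible with the fusion rules. Since $\overline{F}$ is a tensor functor by Proposition \ref{neglres}, what remains is to know it is an equivalence of the underlying categories, which the first two steps give; the monoidal structure then transports automatically. So the genuinely delicate point is really packaged into verifying that $\overline{F}$, already known to be a tensor functor, is fully faithful and essentially surjective, and the one nontrivial input there is the dimension criterion for vertices together with the Green correspondence's control of the "error terms" $M'$. I expect the main obstacle to be pinning down precisely the claim that $\dim_\k M \neq 0$ for an indecomposable $\k G$-module $M$ if and only if $M$ has vertex a Sylow $p$-subgroup, and dually that all lower-vertex indecomposables are negligible — this is a classical fact (a module of vertex $Q < P$ is a summand of something induced from $Q$, and induced modules from proper subgroups have dimension a multiple of the index, which is divisible by $p$ once $Q$ is not Sylow), but it must be invoked carefully for both $G$ and $N_G(P)$ and matched through the correspondence. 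Once that is in place, the equivalence follows from the elementary structure of semisimple categories together with the fact (already in the excerpt) that negligible summands are exactly the dimension-zero indecomposables.
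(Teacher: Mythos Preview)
Your overall strategy---Green correspondence plus the vertex/dimension relationship---is exactly the paper's, and your computation that $\overline{F}(\overline{M}) = \overline{f(M)}$ (because the error summands have smaller vertex, hence dimension divisible by $p$) is correct and matches the paper's injectivity argument. But the step where you conclude that $\overline{F}$ is a bijection on simples has a genuine error: you assert that an indecomposable $M$ has $\dim_\k M \neq 0$ \emph{if and only if} its vertex is a Sylow $p$-subgroup. Only the forward implication is true (this is Green's theorem, stated as Proposition~\ref{dimnonzero}). The converse fails already for $G = (\Bbb Z/p)^2$: there are only finitely many isomorphism classes of indecomposables with vertex a proper subgroup of $G$, but infinitely many indecomposables in total, and infinitely many of these have dimension divisible by $p$ (cf.\ Example~\ref{ex1}(4), where the only odd-dimensional indecomposables are the Heller shifts $\Omega^n(\mathbf 1)$). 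So ``full vertex'' is strictly larger than ``nonzero dimension,'' and you cannot identify the two sets.

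The fix is immediate and does not change your architecture. From your own computation $\Res^G_{N_G(P)} M \cong f(M) \oplus M'$ with $\dim_\k M' = 0$, you get $\dim_\k f(M) = \dim_\k M$. Hence $f$ restricts to a map from nonzero-dimensional indecomposables over $G$ to nonzero-dimensional indecomposables over $N_G(P)$; it is injective because $f$ is, and surjective because any nonzero-dimensional indecomposable $N$ over $N_G(P)$ has vertex $P$ (by Proposition~\ref{dimnonzero} for $N_G(P)$), hence equals $f(M)$ for some $M$ with $\dim_\k M = \dim_\k N \neq 0$. This gives the bijection on simples you need without the false ``iff.'' (The paper, incidentally, handles surjectivity even more cheaply: $Y$ is a direct summand of $\Res^G_{N_G(P)}\Ind_{N_G(P)}^G Y$ since the index is prime to $p$, so $\overline Y$ appears in the image of $\overline F$ without invoking the inverse of the Green correspondence at all.)
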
 

Theorem \ref{equiv} is proved in the next subsection.

Let $L=N_G(P)/P$. 

\begin{corollary} One has $\overline{\Rep_\k G}\cong (\overline{\Rep_\k P})^L$. 
\end{corollary}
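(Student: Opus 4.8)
The plan is to combine the two main results of this section with the equivariantization result from Subsection 3.2. First I would invoke Theorem \ref{equiv}, which gives a tensor equivalence $\overline{\Rep_\k G}\cong \overline{\Rep_\k N_G(P)}$. This reduces the problem to computing the semisimplification of $\Rep_\k N_G(P)$, so from now on I can replace $G$ by $N:=N_G(P)$, which has the feature that its Sylow $p$-subgroup $P$ is normal.

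Next I would apply Corollary \ref{commu2} to the exact sequence
$$
1\to P\to N\to L\to 1,
$$
where $L=N/P$. Since $P$ is a $p$-group, $|P|$ is a power of $p$, so $|L|=[N:P]$ is prime to $p$ and hence $|L|\ne 0$ in $\k$. Therefore Corollary \ref{commu2} applies and yields $\overline{\Rep_\k N}\cong \overline{\Rep_\k P}^L$. Here the action of $L$ on $\Rep_\k P$ is the one from Lemma \ref{equivar}, coming from the conjugation action of $N$ on its normal subgroup $P$; one should note that this action preserves the spherical structure of $\Rep_\k P$ (dimensions are the ordinary vector-space dimensions, which are conjugation-invariant), so the hypothesis of Proposition \ref{commu} is indeed satisfied. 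Chaining the two equivalences gives $\overline{\Rep_\k G}\cong \overline{\Rep_\k N_G(P)}\cong (\overline{\Rep_\k P})^L$, as claimed.

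The main point to be careful about — and the only real content beyond quoting the two theorems — is the verification that the hypotheses of Corollary \ref{commu2} (equivalently Proposition \ref{commu}) hold, namely that $|L|\ne 0$ in $\k$ and that $L$ preserves the spherical structure. Both are immediate here: the first because $[N_G(P):P]$ divides $|G|/|P|$, which is coprime to $p=\mathrm{char}(\k)$ by the definition of a Sylow subgroup, and the second because the spherical structure on $\Rep_\k P$ is the standard one and is manifestly stable under the $L$-action (which is induced by group automorphisms of $P$). I do not anticipate any genuine obstacle; the corollary is a formal consequence, and the proof is just the two-line chaining written above.
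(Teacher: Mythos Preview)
Your proof is correct and follows exactly the same approach as the paper: apply Theorem \ref{equiv} to pass from $G$ to $N_G(P)$, then apply Corollary \ref{commu2} to the exact sequence $1\to P\to N_G(P)\to L\to 1$, noting that $|L|$ is coprime to $p$. The paper's proof is the one-line version of what you wrote; your additional remarks on the spherical structure being preserved are correct but not strictly needed since Corollary \ref{commu2} already packages this.
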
 

\begin{proof} This follows from Theorem \ref{equiv} and Corollary \ref{commu2}, since $|L|\ne 0$ in $\k$ (as $P\subset G$ is a $p$-Sylow subgroup). 
\end{proof} 

\subsection{Proof of Theorem \ref{equiv}}

To prove Theorem \ref{equiv}, we will use the theory of vertices of modular representations and the Green correspondence (see e.g. \cite{A}, Chapter III), which we will now recall. Let $M$ be a finite dimensional representation of a finite group $G$ over a field $\k$ of characteristic $p$. Let $H$ be a subgroup of $G$. 

\begin{definition} We say that $M$ is relatively $H$-projective if $M$ is a direct summand in ${\rm Ind}_H^GV$ for some finite dimensional $H$-module $V$. 
\end{definition} 

\begin{proposition} (see e.g. \cite{A}, Section 9)\label{conju} For each indecomposable $G$-module $M$, the minimal subgroups $H\subset G$ such that $M$ is relatively $H$-projective are conjugate, and they are $p$-groups. 
\end{proposition}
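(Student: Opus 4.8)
The plan is to identify $H$ as the \emph{vertex} of $M$ and to prove both assertions via Higman's relative projectivity criterion together with a Mackey-type decomposition of relative trace maps. First I would recall the relevant formalism. For a subgroup $H\le G$ and a finite-dimensional $\k G$-module $M$ one has the relative trace map $\mathrm{Tr}^G_H\colon \mathrm{End}_{\k H}(M)\to \mathrm{End}_{\k G}(M)$, $\mathrm{Tr}^G_H(\phi)=\sum_{g\in G/H}{}^g\phi$, which is well defined since $\phi$ is $H$-equivariant, and which satisfies $\mathrm{Tr}^G_L(\alpha)\circ\beta=\mathrm{Tr}^G_L(\alpha\circ\beta)$ and $\beta\circ\mathrm{Tr}^G_L(\alpha)=\mathrm{Tr}^G_L(\beta\circ\alpha)$ for $G$-equivariant $\beta$. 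Higman's criterion states that $M$ is relatively $H$-projective if and only if $\mathrm{id}_M\in \mathrm{Tr}^G_H(\mathrm{End}_{\k H}(M))$: the ``only if'' direction holds because $\mathrm{id}_{\mathrm{Ind}^G_HV}$ is visibly such a trace and relative projectivity passes to direct summands, and the ``if'' direction because a factorisation $\mathrm{id}_M=\mathrm{Tr}^G_H(\phi)$ exhibits $M$ as a direct summand of $\mathrm{Ind}^G_H(\mathrm{Res}^G_H M)$.

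The key lemma I would then establish is: if $M$ is relatively $H$-projective and relatively $K$-projective, then $M$ is relatively $(H\cap {}^gK)$-projective for some $g\in G$. Writing $\mathrm{id}_M=\mathrm{Tr}^G_H(\phi)=\mathrm{Tr}^G_K(\psi)$ with $\phi$ $H$-equivariant and $\psi$ $K$-equivariant, and composing the two expressions, one uses the Mackey formula for relative traces
$$\mathrm{Tr}^G_H(\phi)\circ\mathrm{Tr}^G_K(\psi)=\sum_{g\in H\backslash G/K}\mathrm{Tr}^G_{H\cap {}^gK}\bigl(\phi\circ {}^g\psi\bigr),$$
in which $\phi\circ {}^g\psi$ is $(H\cap {}^gK)$-equivariant. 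Since $M$ is indecomposable, $\mathrm{End}_{\k G}(M)$ is a local ring, so the unit $\mathrm{id}_M$ cannot be a sum of non-units; hence some summand $u:=\mathrm{Tr}^G_{H\cap {}^gK}(\phi\circ{}^g\psi)$ is invertible. Then $\mathrm{id}_M=u^{-1}\circ u=\mathrm{Tr}^G_{H\cap{}^gK}(u^{-1}\circ\phi\circ{}^g\psi)$, and since $u^{-1}\circ\phi\circ{}^g\psi$ is again $(H\cap {}^gK)$-equivariant, Higman's criterion gives the claim.

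With the lemma in hand both assertions follow quickly. For conjugacy: if $H$ and $K$ are both minimal among subgroups over which $M$ is relatively projective, the lemma produces $g$ with $M$ relatively $(H\cap{}^gK)$-projective; minimality of $H$ forces $|H\cap{}^gK|\ge|H|$, hence $H\subseteq {}^gK$ and $|H|\le|K|$, and by symmetry $|K|\le|H|$, so $H={}^gK$. For the $p$-group claim: every $\k G$-module is relatively $P$-projective for a Sylow $p$-subgroup $P$, since $\mathrm{id}_M=\mathrm{Tr}^G_P\bigl([G:P]^{-1}\mathrm{id}_M\bigr)$ and $[G:P]$ is invertible in $\k$; applying the lemma with $K=P$ to a minimal such $H$ shows $H\subseteq {}^gP$, so $H$ is a $p$-group.

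The main obstacle is the Mackey formula for relative traces: one must carefully reindex the double sum $\sum_{x\in G/H,\,y\in G/K}{}^x\phi\circ{}^y\psi$ by the double cosets $H x^{-1}y K$ and verify that, using the equivariance of $\phi$ and $\psi$, each block collapses to a single relative trace from $H\cap{}^gK$. Everything else — Higman's criterion, the identity $\beta\circ\mathrm{Tr}^G_L(\alpha)=\mathrm{Tr}^G_L(\beta\circ\alpha)$, and the local-ring property of $\mathrm{End}_{\k G}(M)$ for indecomposable $M$ — is formal once this reindexing is in place.
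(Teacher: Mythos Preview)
Your argument is correct and is the standard textbook proof of the existence and uniqueness (up to conjugacy) of vertices: Higman's criterion, the Mackey decomposition of a product of relative traces, and the locality of $\End_{\k G}(M)$ for indecomposable $M$ combine exactly as you describe. The reindexing you flag as the main obstacle is routine once one writes the double sum over $G/H\times G/K$ and groups terms by the $(H,K)$-double coset of $x^{-1}y$; the $H$- and $K$-equivariance of $\phi$ and $\psi$ then collapse each block to a single $\Tr^G_{H\cap{}^gK}$.

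Note, however, that the paper does not give its own proof of this proposition: it simply quotes it with a reference to \cite{A}, Section~9. What you have written is essentially the argument one finds there (and in most treatments of Green's theory of vertices and sources), so there is nothing to compare --- your proof \emph{is} the referenced one, not an alternative to something in the paper.
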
  

\begin{definition} The minimal subgroup $H\subset G$ such that $M$ is relatively $H$-projective (well-defined up to conjugation thanks to Proposition \ref{conju}) is called the {\it vertex} of $M$. 
\end{definition} 

\begin{proposition} \label{dimnonzero} (\cite{G}, Theorem 9) If $\dim M\ne 0$ in $\k$ then the vertex of $M$ is the Sylow $p$-subgroup $P\subset G$. 
\end{proposition}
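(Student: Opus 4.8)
The statement to prove is Proposition~\ref{dimnonzero}: if $M$ is an indecomposable $G$-module with $\dim M \neq 0$ in $\k$ (where $\mathrm{char}(\k) = p$), then the vertex of $M$ is the Sylow $p$-subgroup $P$. The plan is to argue by contraposition via the standard relative trace / Higman criterion. Recall that $M$ is relatively $H$-projective if and only if the identity $\mathrm{id}_M$ lies in the image of the relative trace map $\mathrm{Tr}^G_H \colon \End_{\k H}(M) \to \End_{\k G}(M)$, $\varphi \mapsto \sum_{g \in G/H} g\varphi g^{-1}$. So let $Q$ be a vertex of $M$; then there exists $\psi \in \End_{\k Q}(M)$ with $\mathrm{id}_M = \mathrm{Tr}^G_Q(\psi) = \sum_{g \in G/Q} g\psi g^{-1}$.

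First I would take ordinary (vector-space) traces of both sides of this identity. Since trace is conjugation-invariant, $\mathrm{Tr}(g\psi g^{-1}) = \mathrm{Tr}(\psi)$ for each coset representative, so we get $\dim M = \mathrm{Tr}(\mathrm{id}_M) = [G:Q]\cdot \mathrm{Tr}(\psi)$ in $\k$. By Proposition~\ref{conju}, $Q$ is a $p$-group, so if $Q$ is a \emph{proper} subgroup of a Sylow $p$-subgroup $P$, then $p \mid [G:Q]$, hence $[G:Q] = 0$ in $\k$, forcing $\dim M = 0$ in $\k$ — contradicting the hypothesis. Therefore $Q$ cannot be strictly smaller than a full Sylow; since $Q$ is a $p$-subgroup contained (up to conjugacy) in some Sylow $p$-subgroup, this forces $Q$ to be a Sylow $p$-subgroup, i.e.\ conjugate to $P$, which is exactly the assertion.

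The only point requiring a little care is the direction of the implication in Higman's criterion and the fact that the vertex is indeed contained in a Sylow $p$-subgroup: every module is relatively $P$-projective (because $[G:P]$ is prime to $p$, so $\mathrm{Tr}^G_P$ is surjective, taking $\psi = [G:P]^{-1}\mathrm{id}_M$), hence a vertex $Q$, being minimal, is conjugate into $P$. I expect the main (very minor) obstacle to be simply making sure the relative trace identity is correctly set up and that $\mathrm{Tr}(\psi)$ is a well-defined element of $\k$ independent of the choice of $Q$-module structure — but once the identity $\dim M = [G:Q]\,\mathrm{Tr}(\psi)$ is in hand, the divisibility argument closes the proof immediately. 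This is essentially Green's original argument, and no substantial difficulty arises beyond quoting Higman's criterion and Proposition~\ref{conju} from the excerpt.
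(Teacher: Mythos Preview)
Your proof is correct and is in fact more elementary than the paper's. The paper argues by contradiction in a different way: assuming the vertex $H$ is not Sylow, it writes $M$ as a summand of $\mathrm{Ind}_H^G V$, restricts to $P$, and applies the Mackey formula together with Green's indecomposability theorem to conclude that every indecomposable summand of $\mathrm{Res}^G_P\mathrm{Ind}_H^G V$ is induced from a proper subgroup of $P$, hence has dimension divisible by $p$. Your route via Higman's criterion and the one-line trace identity $\dim M = [G:Q]\,\mathrm{Tr}(\psi)$ bypasses both the Mackey decomposition and Green's indecomposability theorem entirely; it is shorter and uses only the relative trace characterization of relative projectivity. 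The paper's argument does yield the slightly stronger intermediate statement that every indecomposable summand of $M|_P$ already has dimension zero in $\k$, which is not visible from your approach, but for the proposition as stated your argument is the cleaner one (and is essentially Green's original).
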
 

\begin{proof} The result is well known, but we give a proof for reader's convenience. 
Let $H$ be the vertex of $M$, so $M$ is a direct summand of ${\rm Ind}_H^GV$ for some 
$H-$module $V$. For the sake of contradiction assume that $H$ is not conjugate
to $P$. We will prove: 

(a) any direct summand of ${\rm Ind}_H^GV$ has dimension zero.

This is a contradiction with our assumption on $H$, since $M$ is one of such direct summands. 
We deduce (a) from the following stronger statement:

(b) any direct summand of ${\rm Res}^G_P{\rm Ind}_H^GV$ has dimension zero.

To prove (b), recall that by the Mackey formula (see \cite[III.8, Lemma 7]{A}):
$$
{\rm Res}^G_P{\rm Ind}_H^GV=\bigoplus_{s\in P\setminus G/H}{\rm Ind}_{P\cap sHs^{-1}}^P
{\rm Res}^{sHs^{-1}}_{P\cap sHs^{-1}}s(V).
$$
By the assumption $P\cap sHs^{-1}$ is strictly contained in $P$ for any $s$. Since $P$
is $p-$group, $P\cap sHs^{-1}$ is a subnormal subgroup of $P$.
Thus by
Green's indecomposability Theorem (see \cite[III.8, Theorem 8]{A}) the functor
${\rm Ind}_{P\cap sHs^{-1}}^P$ sends indecomposable modules to indecomposable ones.
In particular, any direct summand of 
${\rm Ind}_{P\cap sHs^{-1}}^P{\rm Res}^{sHs^{-1}}_{P\cap sHs^{-1}}s(V)$ is induced from 
$P\cap sHs^{-1}$ and has 
dimension divisible by the index $[P:P\cap sHs^{-1}]$, hence vanishes in $\k$, see \cite[III.8, Lemma 4]{A}. 
The result follows.
\end{proof} 

\begin{theorem}\cite{A}\label{gk} (Green's correspondence) For each $p$-subgroup $H\subset N_G(P)$, there is a bijection 
between indecomposable representations of $G$ with vertex $H$ and indecomposable representations of $N_G(P)$ 
with vertex $H$, given by $X\mapsto X^\circ$ for $X\in \Rep_\k G$, such that 
$X|_{N_G(P)}=X^\circ\oplus N$, where $N$ is a direct sum 
of indecomposable $N_G(P)$-modules with vertices other than $H$. 
\end{theorem}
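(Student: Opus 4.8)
This is the \emph{Green correspondence}, due to J.\,A.~Green; the plan is to reproduce his classical argument (cf.\ \cite{A}, Chapter 4). Write $L:=N_G(P)$. Since $P$ is a normal Sylow $p$-subgroup of $L$, it is the unique Sylow $p$-subgroup of $L$, so every $p$-subgroup of $L$ is contained in $P$; in the application below one only needs the case $H=P$, for which the genericity hypothesis $N_G(H)\subseteq L$ of the correspondence holds automatically, and I will assume it throughout. Attach to $(H,L)$ the two families of $p$-subgroups
$$
\mathfrak{X}=\{\,H\cap gHg^{-1}\ :\ g\in G\setminus L\,\},\qquad
\mathfrak{Y}=\{\,gHg^{-1}\cap L\ :\ g\in G\setminus L\,\}.
$$
Under the hypothesis $N_G(H)\subseteq L$ one checks that $H$ is not $G$-conjugate to a subgroup of any member of $\mathfrak{X}$, and not $L$-conjugate to a subgroup of any member of $\mathfrak{Y}$ (for $H=P$ all members of both families are proper subgroups of $P$, so this is clear from orders). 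Consequently an indecomposable $\k G$-module whose vertex is $G$-conjugate to $H$ is not relatively $\mathfrak{X}$-projective, and an indecomposable $\k L$-module whose vertex is $L$-conjugate to $H$ is not relatively $\mathfrak{Y}$-projective; here ``relatively $\mathfrak{X}$-projective'' means being a direct summand of $\bigoplus_{Q\in\mathfrak{X}}\Ind_Q^G V_Q$ for some $V_Q$, and similarly for $\mathfrak{Y}$.

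\emph{Restriction side.} Let $X$ be an indecomposable $\k G$-module with vertex $H$ and source $S$, so $S$ is an indecomposable $\k H$-module and $X\mid\Ind_H^G S=\Ind_L^G\Ind_H^L S$. Mackey's decomposition gives
$$
\Res^G_L\Ind_H^G S\ \cong\ \Ind_H^L S\ \oplus\ M,
$$
where $M$ is relatively $\mathfrak{Y}$-projective, because the nontrivial double cosets of $L\backslash G/H$ contribute summands induced from the subgroups $gHg^{-1}\cap L\in\mathfrak{Y}$. Since $X\mid\Ind_H^G S$, we get $X|_L\mid\Ind_H^L S\oplus M$. One now argues, using the Krull--Schmidt theorem together with the adjunction between $\Ind_L^G$ and $\Res^G_L$, that $X|_L$ has exactly one indecomposable direct summand of vertex $H$, that this summand occurs inside $\Ind_H^L S$ and has source $S$, and that the complementary summand is relatively $\mathfrak{Y}$-projective. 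Declaring this summand to be $X^\circ$ yields the asserted decomposition $X|_{N_G(P)}=X^\circ\oplus N$, where every indecomposable summand of $N$ has vertex $L$-conjugate into $\mathfrak{Y}$, hence not conjugate to $H$.

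\emph{Induction side and bijectivity.} Dually, for an indecomposable $\k L$-module $Y$ with vertex $H$, Mackey's formula shows that $\Ind_L^G Y$ has exactly one indecomposable summand of vertex $H$, call it $Y^{\circ\circ}$, with complement relatively $\mathfrak{X}$-projective, and that $\Res^G_L\Ind_L^G Y\cong Y\oplus(\text{relatively }\mathfrak{Y}\text{-projective})$. It remains to see that $X\mapsto X^\circ$ and $Y\mapsto Y^{\circ\circ}$ are mutually inverse. Here the choice $L=N_G(P)$ is favorable: $[G:L]=|\mathrm{Syl}_p(G)|\equiv 1\pmod p$ is invertible in $\k$, so the counit $\Ind_L^G\Res^G_L X\to X$ splits and $X$ is a direct summand of $\Ind_L^G(X|_L)=\Ind_L^G X^\circ\oplus\Ind_L^G N$. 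Since $N$ is relatively $\mathfrak{Y}$-projective, so is $\Ind_L^G N$ as a $\k G$-module, whence $\Ind_L^G N$ has no summand of vertex $H$; therefore $X$ must be the unique vertex-$H$ summand $(X^\circ)^{\circ\circ}$ of $\Ind_L^G X^\circ$. The composite in the other order is symmetric, using $Y\mid\Res^G_L\Ind_L^G Y$ (the trivial Mackey term) and the fact that the restriction to $L$ of a relatively $\mathfrak{X}$-projective $\k G$-module is relatively $\mathfrak{Y}$-projective (again by Mackey, since conjugates of members of $\mathfrak{X}$ are proper subgroups of conjugates of $H$), so it contributes no vertex-$H$ summand; this forces $(Y^{\circ\circ})^\circ\cong Y$.

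\emph{Main obstacle.} The crux is the pair of uniqueness statements: restricting (resp.\ inducing) an indecomposable of vertex $H$ produces \emph{exactly one} indecomposable summand of full vertex $H$. Establishing this requires combining the structure theory of vertices and sources --- an indecomposable with vertex $H$ is a summand of the induction of its source and is relatively projective with respect to no proper subgroup of $H$ --- with the precise form of Mackey's decomposition and with the stability of the families $\mathfrak{X},\mathfrak{Y}$ under the relevant induction and restriction operations, the existence and uniqueness of the ``shared'' summand being extracted via Krull--Schmidt. Once these are in place, the remaining assertions, including bijectivity, are formal, and are especially transparent in our setting because $[G:N_G(P)]$ is prime to $p$.
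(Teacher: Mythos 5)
The paper does not actually prove this statement: it is quoted from Alperin's book \cite{A} and used as a black box, so there is no in-paper argument to compare with. Your outline is the classical one from that very reference (the families $\mathfrak{X},\mathfrak{Y}$, Mackey decomposition, Krull--Schmidt), and you correctly flagged the one genuine subtlety in the statement as printed: for an arbitrary $p$-subgroup $H\subset N_G(P)$ the correspondence requires the hypothesis $N_G(H)\subseteq N_G(P)$, which is not automatic (for $H=1$ the claimed bijection would match projective indecomposables of $G$ with those of $N_G(P)$, which already fails for $G=A_5$, $p=5$); restricting to $H=P$, the only case the paper uses, is the right fix. Your splitting of $\Ind_L^G\Res^G_L X\to X$ via invertibility of $[G:N_G(P)]$ is a legitimate shortcut special to $L=N_G(P)$; the general argument instead uses that $X$ is relatively $L$-projective because its vertex lies in $L$.

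That said, as written this is an outline rather than a proof: the pivotal claim --- that $X|_L$ (resp.\ $\Ind_L^G Y$) has \emph{exactly one} indecomposable summand of vertex $H$, with complement relatively $\mathfrak{Y}$- (resp.\ $\mathfrak{X}$-) projective --- is announced (``one now argues\dots'') and then explicitly listed as the ``main obstacle'' without being carried out, even though you invoke it repeatedly in the bijectivity step. Existence is the easier half: $X\mid\Ind_L^G(X|_L)$, and if every indecomposable summand of $X|_L$ had vertex of order less than $|P|$ then $X$ would be relatively projective with respect to such subgroups, contradicting that its vertex is $P$. Uniqueness is the genuinely nontrivial half and cannot be read off from the restriction side alone: the standard argument establishes the two decompositions $\Res^G_L\Ind_L^G Y\cong Y\oplus(\mathfrak{Y}\text{-projective})$ and $\Ind_L^G\Res^G_L X\cong X\oplus(\mathfrak{X}\text{-projective})\oplus(\text{lower vertex})$ simultaneously and plays them against each other via Krull--Schmidt to force uniqueness on each side. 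Until that lemma is actually proved the proposal is incomplete, though the route you describe is correct and is exactly the one carried out in \cite{A}.
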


We can now prove Theorem \ref{equiv}. Let $T\in \overline{\Rep G}$ be a simple object. 
Then $T=\overline{X}$, where $X\in \Rep G$ is an indecomposable module of nonzero dimension. 
Hence, by Proposition \ref{dimnonzero}, the vertex of $X$ is $P$. Hence, 
by Theorem \ref{gk}, $X|_{N_G(P)}=X^\circ\oplus N$, where $N$ is a direct sum 
of indecomposable $N_G(P)$-modules whose vertices are different from $P$. 
Then by Proposition \ref{dimnonzero}, the dimension of each of these indecomposable modules is zero, hence 
$N$ is negligible. This means that $\overline{F}(T)=\overline{X^\circ}$, which is a simple object of $\overline{\Rep N_G(P)}$. 
This shows that the functor $\overline{F}$ is injective. 

Now let $Z\in \overline{\Rep N_G(P)}$ be a simple object. Then $Z=\overline{Y}$ for some $Y\in \Rep N_G(P)$.
But $Y$ is a direct summand in $({\rm Ind}_{N_G(P)}^GY)|_{N_G(P)}$. Hence, 
$Z$ is a direct summand in $\overline{F}(\overline{{\rm Ind}_{N_G(P)}^GY})$, proving that $\overline{F}$ is surjective. 

Thus, $\overline{F}$ is an equivalence, as claimed. 

\subsection{The case of Sylow subgroup of prime order} 

Let us now consider the simplest nontrivial special case of Theorem \ref{equiv}, 
when the $p$-Sylow subgroup of $G$ has order $p$. 

\begin{corollary}\label{cyc} If $P=\Bbb Z/p$ then $\overline{\Rep_\k G}=({\rm Ver}_p)^L$,
where ${\rm Ver}_p$ is the Verlinde category (see Example \ref{ex1}(2)).
\end{corollary}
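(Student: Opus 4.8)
The plan is to deduce Corollary~\ref{cyc} directly from Theorem~\ref{equiv} together with Corollary~\ref{commu2} and Example~\ref{ex1}(2). By Theorem~\ref{equiv} we have $\overline{\Rep_\k G}\cong\overline{\Rep_\k N_G(P)}$, and since $P\subset G$ is a $p$-Sylow subgroup, $|L|=[N_G(P):P]$ is prime to $p$, so $|L|\ne 0$ in $\k$; hence Corollary~\ref{commu2} (applied to the extension $1\to P\to N_G(P)\to L\to 1$) gives $\overline{\Rep_\k N_G(P)}\cong(\overline{\Rep_\k P})^L$. It therefore suffices to identify $\overline{\Rep_\k P}$ when $P=\Bbb Z/p$.

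For that last step I would invoke Example~\ref{ex1}(2), which says precisely that $\overline{\Rep_\k(\Bbb Z/p)}={\rm Ver}_p$. Concretely, $\k[\Bbb Z/p]\cong\k[x]/(x^p)$, so the indecomposable modules are the Jordan blocks $J_1,\dots,J_p$ of dimensions $1,\dots,p$; the block $J_m$ has dimension $m$ in $\k$, which is nonzero exactly when $1\le m\le p-1$, and $J_p$ is the (negligible) projective. Thus the simple objects of $\overline{\Rep_\k(\Bbb Z/p)}$ are $\overline{J_1},\dots,\overline{J_{p-1}}$, and the tensor product rule is the truncated Clebsch--Gordan rule of $SL(2)$, which is exactly the definition of ${\rm Ver}_p$; this is the content cited from \cite{O}. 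Chaining the three equivalences gives $\overline{\Rep_\k G}\cong({\rm Ver}_p)^L$.

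One should check that the $L$-action on $\overline{\Rep_\k P}$ used here is the one coming from conjugation in $N_G(P)$, so that the equivariantization $({\overline{\Rep_\k P}})^L$ really matches the statement: this is automatic since $L$ acts on $\Rep_\k P$ by conjugation (Lemma~\ref{equivar}) and this action descends to $\overline{\Rep_\k P}$, compatibly with the identification $\overline{\Rep_\k P}={\rm Ver}_p$. There is essentially no obstacle here; the only mild subtlety is making sure the hypotheses of Corollary~\ref{commu2} are in force, i.e.\ that $L$ preserves the spherical structure (it does, since conjugation acts by monoidal autoequivalences preserving ordinary dimension) and that $|L|$ is invertible in $\k$ (guaranteed by $P$ being a full Sylow subgroup). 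So the proof is a short concatenation of already-established results, and I would write it essentially as: ``This follows from Theorem~\ref{equiv}, Corollary~\ref{commu2}, and Example~\ref{ex1}(2).''
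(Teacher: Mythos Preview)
Your proposal is correct and follows exactly the paper's approach: the paper states Corollary~\ref{cyc} without proof, since it is immediate from the preceding corollary $\overline{\Rep_\k G}\cong(\overline{\Rep_\k P})^L$ (itself a combination of Theorem~\ref{equiv} and Corollary~\ref{commu2}) together with Example~\ref{ex1}(2). Your additional remarks verifying the hypotheses of Corollary~\ref{commu2} are accurate and match the paper's one-line justification that $|L|\ne 0$ in $\k$.
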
 

Note that if $p=2$ then ${\rm Ver}_p=\Vec$. Thus, if $P=\Bbb Z/2$ then Corollary 
\ref{cyc} says that $\overline{\Rep_\k G}=\Rep_\k L$. 

So let us consider the case $p>2$ and compute the category $({\rm Ver}_p)^L$ more explicitly. Note that ${\rm Ver}_p={\rm Ver}_p^+\boxtimes {\rm Supervec}$, and ${\rm Ver}_p$ has no nontrivial symmetric tensor autoequivalences (\cite{O}), while ${\rm Ver}_p^+$ has no nontrivial tensor automorphisms of the identity functor. Thus, from Corollary \ref{cyc} we get $$\overline{\Rep_\k G}={\rm Ver}_p^+\boxtimes {\rm Supervec}^L.$$ 

The group of tensor automorphisms of the identity functor of ${\rm Supervec}$ is $\Bbb Z/2$. 
Hence, actions of $L$ on ${\rm Supervec}$ correspond to elements $H^2(L,\Bbb Z/2)$. Let $c\in H^2(L,\Bbb Z/2)$ be the element corresponding to the action as above, and 
let us compute $c$. Since the action of $L$ on $\Bbb Z/p$ factors through an action of $\Bbb Z/(p-1)$, the element $c$ is pulled back from a canonical element 
$\bar c\in H^2(\Bbb Z/(p-1),\Bbb Z/2)=\Bbb Z/2$.

\begin{proposition}\label{nontri} The element $\bar c$ is nontrivial. 
\end{proposition}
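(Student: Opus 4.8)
The plan is to show that the element $\bar c \in H^2(\Bbb Z/(p-1), \Bbb Z/2) = \Bbb Z/2$ is nontrivial by exhibiting it concretely via a group $G$ with $p$-Sylow subgroup $\Bbb Z/p$ and quotient $L = \Bbb Z/(p-1)$ acting faithfully, and computing the braided structure on $\overline{\Rep_\k G}$ directly. The natural candidate is $G = \Bbb Z/p \rtimes \Bbb Z/(p-1)$, the full normalizer of a Sylow subgroup in, say, the affine group over $\Bbb F_p$ (equivalently $N_{S_p}(\Bbb Z/p)$), so that $N_G(P) = G$ and $L = G/P$ acts on $P = \Bbb Z/p$ by the full group of automorphisms. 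By Corollary \ref{cyc}, $\overline{\Rep_\k G} = ({\rm Ver}_p)^L$, and the question is whether the induced $L$-action on the ${\rm Supervec}$ factor is the trivial one or the one twisted by the nontrivial class in $H^2(L, \Bbb Z/2)$.

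The key point is to identify, inside $\overline{\Rep_\k G}$, the object responsible for the ${\rm Supervec}$ factor — namely the image $\overline{X}$ of an indecomposable $\Bbb Z/p$-module $X$ of dimension $p-1$ (the Jordan block $J_{p-1}$), which has categorical dimension $\equiv p-1 \equiv -1 \pmod p$ and hence becomes the "odd line" generator $\delta$ of ${\rm Supervec}$ after semisimplification. The $L$-equivariant structure on the simple object of $({\rm Ver}_p)^L$ lying over $\delta$ is governed by how $L$ permutes (and self-identifies) the relevant indecomposables, together with the associator/braiding data. Concretely, I would compute $\delta \otimes \delta \cong \be$ in ${\rm Ver}_p$ and track the induced isomorphism together with the $L$-action, so that the obstruction class $\bar c$ is read off as the failure of the square root of the identity automorphism of $\delta \otimes \delta$ to be $L$-equivariant. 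Equivalently — and this is probably cleaner — one uses the fact that the class $\bar c$ is detected on the subgroup $\Bbb Z/2 \subset L$ (the unique order-two subgroup, since $p - 1$ is even for $p > 2$), because $H^2(\Bbb Z/(p-1), \Bbb Z/2) \to H^2(\Bbb Z/2, \Bbb Z/2)$ is an isomorphism (both are $\Bbb Z/2$, and restriction-corestriction shows the composite is the identity as $[\Bbb Z/(p-1):\Bbb Z/2]$ is odd). So it suffices to treat the group $G_0 = \Bbb Z/p \rtimes \Bbb Z/2$, the dihedral-type group of order $2p$, where the order-two element acts on $\Bbb Z/p$ by inversion.

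For $G_0 = D_p$ (order $2p$) in characteristic $p$, the category $\overline{\Rep_\k G_0}$ can be computed by hand: $\Rep_\k G_0$ has a well-understood list of indecomposables, and the nonzero-dimensional ones are the $2(p-1)/2 = p-1$ "Green correspondents" of the Jordan blocks $J_1, \dots, J_{p-1}$, each $J_i$ for $i$ with a suitable parity giving rise to one or two $G_0$-modules depending on whether the $\Bbb Z/2$-action fixes the isomorphism class of $J_i \cong \Omega$-translates. The upshot should be that $\overline{\Rep_\k G_0} = {\rm Ver}_p^+ \boxtimes ({\rm Supervec})^{\Bbb Z/2}$, and the claim reduces to: the $\Bbb Z/2$-action on ${\rm Supervec}$ arising here is the nontrivial one, i.e. $\overline{\Rep_\k D_p}$ contains $\Rep_\k \Bbb Z/4$ rather than $\Rep_\k (\Bbb Z/2 \times \Bbb Z/2)$ as the equivariantization of the ${\rm Supervec}$ part. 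This I would verify by locating the simple object $Y$ over $\delta$ and computing $Y^{\otimes 2}$: if the $\Bbb Z/2$-grading were untwisted, $Y^{\otimes 2}$ would be $\be \oplus (\text{the sign character of } L)$, whereas in fact the relevant indecomposable $G_0$-module tensor-squares, modulo negligibles, to the indecomposable $2$-dimensional $L = \Bbb Z/2$-module pulled up to $G_0$ — witnessing a nonsplit extension and hence the nontrivial cocycle. The Frobenius–Schur / indicator-style computation: the indecomposable $\Bbb Z/p$-module $J_{p-1}$, as a $D_p$-module, is self-dual but its $D_p$-equivariant self-duality pairing is skew (the inversion automorphism negates the relevant pairing on the socle/top), so $\overline{J_{p-1}}$ is an object of second Frobenius–Schur indicator $-1$ in $\overline{\Rep_\k D_p}$ — and an object of indicator $-1$ over $\delta \in {\rm Supervec}$ in a $\Bbb Z/2$-equivariantization precisely forces the twisted action.

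The main obstacle I anticipate is the explicit parity bookkeeping in the last step: pinning down the sign of the $D_p$-invariant bilinear form on $J_{p-1}$ (equivalently, whether the inversion automorphism of $\Bbb Z/p$ acts on the relevant $1$-dimensional $\Ext$- or socle-space by $+1$ or $-1$), since this sign is exactly what distinguishes $\bar c = 0$ from $\bar c \ne 0$, and it must be extracted carefully from the module structure of $J_{p-1}$ — note that $J_{p-1} = \k[\Bbb Z/p]/(\text{socle})$ or $= \mathrm{rad}\,\k[\Bbb Z/p]$, and the action of the involution on $\mathrm{rad}/\mathrm{rad}^2$ versus on the top of the projective cover carries the decisive sign. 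Once that sign is computed (I expect it to come out to $-1$, using that $p-1$ is even and the involution acts on a basis $v, (g-1)v, \dots, (g-1)^{p-2}v$ of $J_{p-1}$ by $v \mapsto$ (a vector whose leading term picks up $(-1)^{p-2} = -1$)), the result follows, and by the restriction isomorphism $H^2(\Bbb Z/(p-1),\Bbb Z/2) \cong H^2(\Bbb Z/2,\Bbb Z/2)$ the nontriviality of $\bar c$ for the full group $\Bbb Z/(p-1)$ is immediate.
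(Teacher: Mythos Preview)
Your overall strategy matches the paper's exactly: reduce to the order-two subgroup via the restriction isomorphism $H^2(\Bbb Z/(p-1),\Bbb Z/2) \to H^2(\Bbb Z/2,\Bbb Z/2)$, work in $\overline{\Rep_\k D_p}$ with $D_p = \Bbb Z/p \rtimes \Bbb Z/2$, and analyze the invertible object $X$ lying over the Jordan block $J_{p-1}$ (the generator $\delta$ of the ${\rm Supervec}$ factor). The test is indeed whether this object has order $2$ (then $\bar c = 0$) or order $4$ (then $\bar c \ne 0$).

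However, your proposed verification contains a concrete error. You assert that $J_{p-1}$ with its $D_p$-structure is self-dual and plan to compute its Frobenius--Schur indicator. In fact $X$ is \emph{not} self-dual as a $D_p$-module. The radical-layer computation you sketch shows that the involution $s$ acts on the layer containing $(g-1)^i v$ by $(-1)^i$ times its action on $v$, so the $p-1$ composition factors alternate $\k_+,\k_-,\k_+,\k_-,\dots$; in particular top and socle carry \emph{opposite} characters of $\Bbb Z/2$, whence $X^* \cong X \otimes \k_-$ rather than $X^* \cong X$. The Frobenius--Schur indicator is therefore undefined, and your expectation that $Y^{\otimes 2}$ is a $2$-dimensional $\Bbb Z/2$-module is also off: one gets $X^{\otimes 2} = X\otimes X^* \otimes \k_- = \k_-$, which is $1$-dimensional.

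This is exactly the paper's argument, read correctly: from $X \cong X^* \otimes \k_-$ one concludes immediately that $X$ cannot have order $2$ (else $X \cong X^*$, forcing $\k_- \cong \be$), so $X$ has order $>2$ and $\bar c$ is nontrivial. Your sign-tracking on the radical layers is the right calculation; you just need to interpret its output as ``not self-dual'' rather than ``self-dual with skew form''.
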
  

\begin{proof} 
It suffices to show that the pullback of $\bar c$ to $\Bbb Z/2\subset \Bbb Z/(p-1)$ is nontrivial. For this purpose, it suffices to consider the semisimplification 
of $\Rep_\k D_p$, where $D_p:=\Bbb Z/2\ltimes \Bbb Z/p$ is the dihedral group. In $\overline{\Rep_\k D_p}$ we have an invertible
object $X$ of vector space dimension $p-1$, which has composition series $\k_+,\k_-,...,\k_+,\k_-$, 
where $\k_+$ is the trivial representation of $\Bbb Z/2$ and $\k_-$ is the sign representation, and it suffices to show  
that $X$ has order $>2$. But we have $X=X^*\otimes \k_-$. Thus, $X$ cannot have order $2$, as desired.      
\end{proof} 

Let $\widetilde L$ be the central extension of $L$ by $\Bbb Z/2$ defined by the cocycle $c$, and let $z$ be the generator of the central subgroup $\Bbb Z/2\subset \widetilde L$.  

\begin{corollary}\label{corro} If $p$ is odd and $P=\Bbb Z/p$ then 
$$\overline{\Rep_\k G}\cong {\rm Ver}_p^+\boxtimes \Rep_\k(\widetilde L,z),$$ where $\Rep_\k(\widetilde L,z)$ is the category of representations 
of $\widetilde L$ on supervector spaces, so that $z$ acts by the parity operator.
\end{corollary}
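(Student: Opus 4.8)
The plan is to deduce Corollary \ref{corro} directly from Corollary \ref{cyc} and Proposition \ref{nontri}, by tracking how the $L$-action on the semisimplification $\overline{\Rep_\k P} = \Ver_p$ interacts with equivariantization. First I would invoke Corollary \ref{cyc} to write $\overline{\Rep_\k G} \cong (\Ver_p)^L$, where the $L$-action is the one induced on $\overline{\Rep_\k P}$ by the conjugation action of $N_G(P)$ on $P$ (this uses that $L = N_G(P)/P$ acts on $\Rep_\k P$ and hence on its semisimplification). Next I would use the decomposition $\Ver_p = \Ver_p^+ \boxtimes \mathrm{Supervec}$ recalled in the text just after Corollary \ref{cyc}, together with the rigidity facts cited there — $\Ver_p^+$ has no nontrivial tensor automorphisms of the identity functor and $\Ver_p$ has no nontrivial symmetric tensor autoequivalences — so that the $L$-action splits as the trivial action on $\Ver_p^+$ tensored with an action on $\mathrm{Supervec}$ by tensor automorphisms of the identity functor. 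Since the group of such automorphisms of $\mathrm{Supervec}$ is $\Bbb Z/2$, the $L$-action on $\mathrm{Supervec}$ is classified by an element $c \in H^2(L,\Bbb Z/2)$, which by the discussion preceding Proposition \ref{nontri} is pulled back from the canonical generator $\bar c \in H^2(\Bbb Z/(p-1),\Bbb Z/2)$, and Proposition \ref{nontri} tells us $\bar c$ (hence $c$) is nontrivial precisely in the sense that it is the pullback of that generator.

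Having identified the action, I would then compute the equivariantization $(\Ver_p)^L = (\Ver_p^+ \boxtimes \mathrm{Supervec})^L$. Because equivariantization is monoidal in the Deligne tensor product and $L$ acts trivially on the first factor, this factors as $\Ver_p^+ \boxtimes (\mathrm{Supervec})^L$, where $(\mathrm{Supervec})^L$ denotes the equivariantization with respect to the action defined by the cocycle $c$. The final step is to identify $(\mathrm{Supervec})^L$ with $\Rep_\k(\widetilde L, z)$: an action of $L$ on $\mathrm{Supervec}$ by tensor automorphisms of the identity, classified by a cocycle $c$, produces upon equivariantization the category of representations of the corresponding central extension $\widetilde L$ of $L$ by $\Bbb Z/2$ on which the central generator $z$ acts by the parity operator of $\mathrm{Supervec}$ — this is a standard computation with equivariantization of a pointed category (one checks that an $L$-equivariant object of $\mathrm{Supervec}$ is exactly a superspace together with a projective action of $L$ with Schur multiplier $c$, i.e. a genuine $\widetilde L$-representation with $z$ acting by parity). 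Combining the two factors gives $\overline{\Rep_\k G} \cong \Ver_p^+ \boxtimes \Rep_\k(\widetilde L, z)$, as claimed.

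The main obstacle I anticipate is making the splitting of the $L$-action rigorous: one must argue that an arbitrary (monoidal) action of $L$ on $\Ver_p^+ \boxtimes \mathrm{Supervec}$ is, up to equivalence of actions, a product of the trivial action on $\Ver_p^+$ and an action on $\mathrm{Supervec}$, and moreover that it preserves the symmetric/spherical structure so that only automorphisms of the identity functor of $\mathrm{Supervec}$ occur. This is where the cited facts — triviality of $\Aut_\otimes(\Ver_p^+)$ on objects of the identity functor, and the absence of symmetric tensor autoequivalences of $\Ver_p$ — do the work, but stitching them into a clean statement that the action is classified by $H^2(L,\Bbb Z/2)$ requires a little care with the obstruction theory for group actions on tensor categories. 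The remaining identification of $(\mathrm{Supervec})^L$ with $\Rep_\k(\widetilde L,z)$ and the compatibility of equivariantization with $\boxtimes$ are essentially routine given Lemma \ref{equivar} and the general theory in \cite{EGNO}, Subsection 4.15.
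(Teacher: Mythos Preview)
Your proposal is correct and follows essentially the same approach as the paper: the corollary is stated without a separate proof because it is an immediate consequence of the discussion preceding it, namely Corollary~\ref{cyc}, the factorization $\Ver_p=\Ver_p^+\boxtimes\mathrm{Supervec}$, the rigidity of autoequivalences of $\Ver_p$ and $\Ver_p^+$, and the identification of $(\mathrm{Supervec})^L$ with $\Rep_\k(\widetilde L,z)$. One small remark: Proposition~\ref{nontri} is not actually needed to prove the corollary as stated (since $\widetilde L$ is \emph{defined} via $c$, the equivalence holds whether or not $c$ is trivial), and your parenthetical ``$\bar c$ (hence $c$) is nontrivial'' is not quite right in general, as pullback along $L\to\Bbb Z/(p-1)$ can kill cohomology classes.
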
 

\subsection{The case of the symmetric group $S_{p+n}$, where $n<p$} \label{symgroup}

If $p=2$ then we have $\overline{\Rep_\k S_2}=\overline{\Rep_\k S_3}=\Vec_\k$. 
So consider the case $p>2$. Let $G=S_{p+n}$, where $0\le n<p$. Then $P=\Bbb Z/p$, and $N_G(P)=S_n\times \Bbb Z/(p-1)\ltimes \Bbb Z/p$. Thus, 
by Corollary \ref{corro}, 
$$
\overline{\Rep_\k S_{p+n}}\cong \Rep_\k S_n\boxtimes {\rm Ver}_p^+\boxtimes \Rep_\k (\Bbb Z/2(p-1),z),
$$
where $z$ is the element of order $2$ in $\Bbb Z/2(p-1)$. 
In particular, for $n\ge 2$ the group of invertible objects of this category is \linebreak $\Bbb Z/2\times \Bbb Z/2(p-1)$. 

In particular, we obtain the following proposition.

\begin{proposition} \label{eqi} If $n<p$ then the restriction functor 
$${\rm Res}: \Rep_\k S_{n+p}\to \Rep_\k (S_n\times S_p)$$ 
induces an equivalence $\overline{\Rep_\k S_{n+p}}\to \overline{\Rep_\k (S_n\times S_p)}$.
\end{proposition}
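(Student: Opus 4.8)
The plan is to deduce Proposition \ref{eqi} from the structural results already established, by factoring the restriction functor ${\rm Res}: \Rep_\k S_{n+p}\to \Rep_\k(S_n\times S_p)$ through the normalizer $N=N_{S_{n+p}}(P)=(S_n\times \Bbb Z/(p-1))\ltimes \Bbb Z/p$, which sits inside $S_n\times S_p$. Indeed we have a chain of subgroup inclusions $N\subset S_n\times S_p\subset S_{n+p}$, and $P=\Bbb Z/p$ is a Sylow $p$-subgroup of all three groups (since $p^2\nmid (n+p)!$ as $n<p$). So the restriction functor ${\rm Res}$ is the composite of ${\rm Res}_1: \Rep_\k S_{n+p}\to \Rep_\k(S_n\times S_p)$ followed by nothing — rather, it is more convenient to write ${\rm Res}_N: \Rep_\k S_{n+p}\to \Rep_\k N$ as the composite $\Rep_\k S_{n+p}\xrightarrow{{\rm Res}}\Rep_\k(S_n\times S_p)\xrightarrow{{\rm Res}'}\Rep_\k N$, where ${\rm Res}'$ is restriction from $S_n\times S_p$ to $N$.

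First I would observe that by Proposition \ref{neglres} all three restriction functors descend to tensor functors between the corresponding semisimplifications: this applies to ${\rm Res}$ because $[S_{n+p}:N_{S_{n+p}}(P)]\ne 0$ in $\k$, to ${\rm Res}'$ because $[S_n\times S_p: N_{S_n\times S_p}(P)]\ne 0$ in $\k$ (note $N_{S_n\times S_p}(P)=N$ as well, since $S_n$ centralizes $P$ and $N_{S_p}(P)=\Bbb Z/(p-1)\ltimes\Bbb Z/p$), and to ${\rm Res}_N={\rm Res}'\circ{\rm Res}$. Next, by Theorem \ref{equiv}, the induced functor $\overline{{\rm Res}_N}: \overline{\Rep_\k S_{n+p}}\to \overline{\Rep_\k N}$ is an equivalence, and likewise $\overline{{\rm Res}'}: \overline{\Rep_\k(S_n\times S_p)}\to \overline{\Rep_\k N}$ is an equivalence — applying Theorem \ref{equiv} to the group $S_n\times S_p$ with Sylow subgroup $P$ and normalizer $N$. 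Since $\overline{{\rm Res}_N}=\overline{{\rm Res}'}\circ\overline{{\rm Res}}$ (functoriality of the semisimplification construction with respect to composition of surjective restriction functors, which follows from Corollary \ref{desc} / Corollary \ref{invo}), and since $\overline{{\rm Res}_N}$ and $\overline{{\rm Res}'}$ are both equivalences, it follows that $\overline{{\rm Res}}=(\overline{{\rm Res}'})^{-1}\circ\overline{{\rm Res}_N}$ is an equivalence of tensor categories. That gives exactly the claim.

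The only genuine point needing care — and the one I would spell out — is the verification that $N$ really is the normalizer of $P$ in $S_n\times S_p$ and that Theorem \ref{equiv} therefore legitimately applies to the pair $(S_n\times S_p, P)$. Here $P$ is generated by a $p$-cycle supported on the last $p$ points; its centralizer in $S_p$ is $P$ itself and its normalizer in $S_p$ is the affine group $\Bbb Z/(p-1)\ltimes\Bbb Z/p$, while the $S_n$-factor acts trivially on those $p$ points and so lies entirely in the normalizer. Hence $N_{S_n\times S_p}(P)=S_n\times(\Bbb Z/(p-1)\ltimes\Bbb Z/p)=N$, matching the description of $N_{S_{n+p}}(P)$ given at the start of Subsection \ref{symgroup}. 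I do not anticipate a real obstacle; the main thing to be a little careful about is that composing two restriction functors and then semisimplifying gives the same tensor functor as semisimplifying each one and composing — this is immediate from the construction (the semisimplification functor is the identity on objects and a quotient on morphisms, and both squares commute on the nose), but it is worth a sentence.
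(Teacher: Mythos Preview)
Your proof is correct and follows essentially the same approach as the paper: both arguments note that $N_{S_n\times S_p}(P)=N_{S_{n+p}}(P)=N$, apply Theorem \ref{equiv} to each of the two groups, and conclude by a 2-out-of-3 argument that the intermediate restriction $\overline{\rm Res}$ is an equivalence. The paper's proof is simply the one-line summary of what you have written out in detail. One small citation fix: for the descent of ${\rm Res}:\Rep_\k S_{n+p}\to\Rep_\k(S_n\times S_p)$ you should invoke Corollary \ref{invo} (with the index $[S_{n+p}:S_n\times S_p]=\binom{n+p}{n}$ prime to $p$), not Proposition \ref{neglres}, since $S_n\times S_p$ is not the normalizer itself; you do cite the correct result later in the paragraph, so this is purely cosmetic.
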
 

\begin{proof}
The functor ${\rm Res}$ descends to a tensor functor $\overline{\Rep_\k S_{n+p}}\to \overline{\Rep_\k (S_n\times S_p)}$ by Corollary \ref{desc}, 
and this tensor functor is an equivalence since the inclusion $S_n\times S_p\hookrightarrow S_{n+p}$ induces an isomorphism of the normalizers of the Sylow $p$-subgroups. 
\end{proof} 

Let us now describe the functor $\bold S$ more explicitly, in the special case $n=0$, i.e., $\C=\Rep_\k S_p$, where $p>2$. 
It is well known that in this case we have a unique non-semisimple block $\mathcal B$ of defect $1$, namely, the block of the trivial representation. 
The blocks of defect zero consist of objects of dimension $0$, so they are killed by $\bold S$. 
So let us first consider the images under $\bold S$ of the simple objects of $\mathcal B$. These objects have the form $\wedge^iV_{p-2}$, $i=0,...,p-2$, where $V_{p-2}$ is the $p-2$-dimensional irreducible representation of $S_p$ which is the middle composition factor in the permutation representation. To compute the image $\bold S(V_{p-2})$  of $V_{p-2}$, denote by $L_i$ $i=1,3,5,...,p-2$ the simple objects of ${\rm Ver}_p^+$ (so that $L_1=\bold 1$), and by $\chi$ the generator of $\Rep_\k(\Bbb Z/2(p-1))$. The object $\bold S(V_{p-2})$ has to be simple and has dimension $-2$, so it has the form $L_{p-2}\otimes \chi^m$, where $m$ is even as $\dim \chi=-1$. Moreover, $S^2V_{p-2}$ contains $\bold 1$ as a direct summand, which implies that $m=0$ or $m=p-1$. Finally, $\wedge^{p-2}V_{p-2}={\rm sign}$ is the sign representation of $S_p$, so 
$\wedge^{p-2}(L_{p-2}\otimes \chi^m)=\chi^{m(p-2)}$ is nontrivial, which implies that $m=p-1$. Thus, 
$$
\bold S(V_{p-2})=L_{p-2}\otimes \chi^{p-1}.
$$
This means that 
$$
\bold S(\wedge^i V_{p-2})=L_{p-1-i}\otimes \chi^{p-1}
$$
for odd $i\le p-2$, and 
$$
\bold S(\wedge^i V_{p-2})=L_{i+1}
$$
for even $i\le p-2$. 

Now consider the representation $V_{p-1}$ of $S_p$ on the space of functions on $[1,p]$ modulo constants. 
Then $\bold S(V_{p-1})$ has dimension $-1$, so it is of the form $\chi^m$ for some odd $m$. Moreover, it is well known that 
$\wedge^iV_{p-1}$ is indecomposable for $i\le p-1$. Since it is not invertible for $0<i<p-1$, we see that $\chi^{mi}\ne \bold 1,\chi^{p-1}$ for any 
$0<i<p-1$. Also $\chi^{m(p-1)}=\chi^{p-1}$. This implies that the order of $\chi^m$ is $2(p-1)$, so we may assume that $m=1$ by making a suitable choice of $\chi$. Thus, for a suitable choice of $\chi$ we have 
$$
\bold S( V_{p-1})=\chi.
$$
The suitable choice of $\chi$ is well defined only up to the change $\chi\to \chi^p$, since the group $\Bbb Z/2(p-1)$ 
has an automorphism of order $2$ (sending $1$ to $p$) which acts trivially on $\Bbb Z/(p-1)=\Aut(\Bbb Z/p)$. Thus, the well-defined question is to determine $\chi^2$, which is a character of $\Aut(\Bbb Z/p)$, naturally identified with $\Bbb F_p^\times$. 
Then it is easy to show by a direct calculation that $\chi^2$ is the natural inclusion $\Bbb F_p^\times\hookrightarrow \k^\times$ coming from the inclusion of fields $\Bbb F_p\hookrightarrow \k$. 

Thus, we obtain

\begin{proposition}\label{generat} The category $\overline{\Rep_\k S_p}$ is generated by $\overline{V_{p-2}}$ and $\overline{V_{p-1}}$. In other words, 
the simple objects of $\overline{\Rep_\k S_p}$ have the form $\overline{V_{p-1}}^{\otimes m}\otimes \wedge^i \overline V_{p-2}$, where 
$0\le m\le p-2$ and $0\le i\le p-2$ (so the total number of simple objects is $(p-1)^2$). 
\end{proposition}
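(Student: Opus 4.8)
The plan is to show that the two distinguished objects $\overline{V_{p-2}}$ and $\overline{V_{p-1}}$ tensor-generate $\overline{\Rep_\k S_p}$, and then to count. We already know from Corollary \ref{cyc} and the discussion preceding the proposition that
$$
\overline{\Rep_\k S_p}\cong {\rm Ver}_p^+\boxtimes \Rep_\k(\Bbb Z/2(p-1),z),
$$
so the simple objects are precisely the $L_j\otimes \chi^m$ with $j\in\{1,3,\dots,p-2\}$ and $m\in\Bbb Z/2(p-1)$, giving $\frac{p-1}{2}\cdot 2(p-1)=(p-1)^2$ of them. Thus it suffices to exhibit all of these as tensor subquotients (equivalently, direct summands in the semisimple category $\overline{\Rep_\k S_p}$) of tensor powers of $\overline{V_{p-2}}$ and $\overline{V_{p-1}}$.

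First I would record the computations already carried out in the text: for even $i\le p-2$ one has $\bold S(\wedge^i V_{p-2})=L_{i+1}$, so as $i$ ranges over $0,2,4,\dots,p-3$ we obtain all simple objects $L_1,L_3,\dots,L_{p-2}$ of ${\rm Ver}_p^+$ directly from $\overline{V_{p-2}}$ (each $\wedge^i$ of an object lives in the tensor subcategory generated by that object, since $\wedge^i V$ is a summand of $V^{\otimes i}$ in characteristic $0$ or, more relevantly here, a summand of the appropriate Schur functor applied to $V$ inside the pseudo-abelian tensor category; in any case $\wedge^i\overline{V_{p-2}}$ is computed by applying $\bold S$, which is monoidal). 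Next, from $\bold S(V_{p-1})=\chi$ we get the generator $\chi$ of $\Rep_\k(\Bbb Z/2(p-1))$, and hence all its powers $\chi^m$, $0\le m\le 2p-3$. Tensoring, we obtain every $L_j\otimes\chi^m$, which is the full list of simple objects. This proves that $\overline{V_{p-2}}$ and $\overline{V_{p-1}}$ generate, and the explicit parametrization in the statement follows by writing $\chi^m$ with $m$ in the range $0\le m\le p-2$ (the even powers) times $\chi^{p-1}$ (odd, from the odd $\wedge^i\overline{V_{p-2}}$) — more precisely, since $\overline{V_{p-1}}^{\otimes m}=\chi^m$ and $\wedge^i\overline{V_{p-2}}$ equals $L_{i+1}$ for even $i$ and $L_{p-1-i}\otimes\chi^{p-1}$ for odd $i$, the products $\overline{V_{p-1}}^{\otimes m}\otimes\wedge^i\overline{V_{p-2}}$ with $0\le m,i\le p-2$ run exactly once through all $(p-1)^2$ simples (the factor $\chi^{p-1}$ appearing for odd $i$ is what lets $m$ only need to range over a set of size $p-1$ rather than $2(p-1)$).

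The one point that needs genuine justification — and which I expect to be the main obstacle — is the claim that the $(p-1)^2$ listed products are pairwise non-isomorphic, i.e. that the counting is exact rather than merely an upper bound. For this I would argue as follows: the Grothendieck ring of $\overline{\Rep_\k S_p}$ is $\Bbb Z[{\rm Ver}_p^+]\otimes\Bbb Z[\Bbb Z/2(p-1)]$, so a simple object is determined by its ${\rm Ver}_p^+$-component and its $\Bbb Z/2(p-1)$-grading. The ${\rm Ver}_p^+$-component of $\overline{V_{p-1}}^{\otimes m}\otimes\wedge^i\overline{V_{p-2}}$ is $L_{i+1}$ (for even $i$) or $L_{p-1-i}$ (for odd $i$), and as $i$ runs over $0,\dots,p-2$ this hits each of $L_1,\dots,L_{p-2}$ exactly twice — once from an even $i$ and once from an odd $i$; the $\Bbb Z/2(p-1)$-grading is $m$ in the even case and $m+p-1$ in the odd case, and since $m\mapsto m$ and $m\mapsto m+p-1$ have disjoint images modulo $2(p-1)$ when $m$ ranges over $0,\dots,p-2$, the two families with the same ${\rm Ver}_p^+$-component are distinguished by their grading. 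Hence all $(p-1)^2$ are distinct, and since that is the total number of simple objects, the list is complete. The remaining verifications (that $\wedge^i$ of a generator lies in the generated subcategory, and that $\bold S$ commutes with $\wedge^i$) are routine consequences of $\bold S$ being a monoidal functor between Karoubian categories.
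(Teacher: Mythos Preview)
Your proposal is correct and follows the same approach as the paper: the proposition is stated there as an immediate consequence (``Thus, we obtain'') of the formulas $\bold S(\wedge^i V_{p-2})=L_{i+1}$ (even $i$), $\bold S(\wedge^i V_{p-2})=L_{p-1-i}\otimes\chi^{p-1}$ (odd $i$), and $\bold S(V_{p-1})=\chi$ computed just before it, together with the identification $\overline{\Rep_\k S_p}\cong {\rm Ver}_p^+\boxtimes \Rep_\k(\Bbb Z/2(p-1),z)$. Your explicit verification that the $(p-1)^2$ products are pairwise non-isomorphic (by tracking the ${\rm Ver}_p^+$-component and the $\Bbb Z/2(p-1)$-grading separately) is more detailed than what the paper writes out, but it is the natural unpacking of the same argument; the only small point worth cleaning up is your parenthetical about $\wedge^i V$ being a summand of $V^{\otimes i}$---just say that for $i\le p-2<p$ the antisymmetrizer $\tfrac{1}{i!}\sum_\sigma(-1)^\sigma\sigma$ is defined in $\k$, so $\bold S$ (being $\k$-linear symmetric monoidal) carries it to the antisymmetrizer on $\overline{V_{p-2}}^{\otimes i}$, giving $\bold S(\wedge^i V_{p-2})=\wedge^i\overline{V_{p-2}}$.
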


\subsection{Application: the semisimplification of the Deligne category $\underline{\Rep}^{\rm ab} S_n$}

Let $n$ be a nonnegative integer, and $\k$ be a field of characteristic zero. Let $\underline{\Rep} S_n$ denote the Karoubian Deligne category over $\k$ defined in \cite{D3} (its main property is that it can be interpolated to non-integer values of $n$ in $\k$). This category has a tensor ideal $I$ such that $\underline{\Rep} S_n/I=\Rep_\k S_n$. Moreover, it is known (see \cite{D3},\cite{CO}) that    $\underline{\Rep} S_n$ has an abelian envelope $\underline{\Rep}^{\rm ab} S_n$; in particular, the trace of any nilpotent endomorphism in $\underline{\Rep} S_n$ vanishes. 
Since $I$ consists of morphisms factoring through negligible objects (i.e., direct sums of indecomposable objects of dimension zero), and $\Rep_\k S_n$ is semisimple, we see 
that $I=\N(\C)$ is the full ideal of negligible morphisms (i.e., every negligible morphism factors through a negligible object), and 
the semisimplification $\overline{\underline{\Rep} S_n}$ coincides with $\Rep_\k S_n$.

The question of describing the semisimplification of the abelian envelope $\underline{\Rep}^{\rm ab} S_n$ is more interesting. The answer is given by the following theorem. 

\begin{theorem}\label{semdel} 
(i) The restriction functor 
$$
{\rm Res}: \underline{\Rep}^{\rm ab} S_n\to \Rep_\k S_n\boxtimes \underline{\Rep}^{\rm ab} S_0
$$ 
induces an equivalence between the semisimplifications of these categories. 

(ii) We have an equivalence of symmetric tensor categories $\overline{\underline{\Rep}^{\rm ab} S_n}\cong \Rep_\k S_n\boxtimes \Rep_\k(GL(1)\times SL(2),(-1,-1))$.
\end{theorem}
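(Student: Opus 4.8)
The plan is to deduce both parts of the theorem from the results already established for modular representations, by a suitable reduction to characteristic $p$, together with the work of Harman on the structure of $\underline{\Rep}^{\rm ab} S_n$. First I would recall that, by Harman's description, $\underline{\Rep}^{\rm ab} S_n$ is built, via the interpolation/ultraproduct comparison, from the modular categories $\Rep_{\bar{\Bbb F}_p} S_m$ for large $p$: indecomposable objects of $\underline{\Rep}^{\rm ab} S_n$ and their tensor products are controlled by those of $\Rep_{\bar{\Bbb F}_p} S_{n+p}$ (for $p \gg 0$), and under this comparison dimensions of objects in $\underline{\Rep}^{\rm ab} S_n$ specialize to $\dim \bmod p$ of the corresponding modular objects. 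Consequently an object of $\underline{\Rep}^{\rm ab} S_n$ has nonzero dimension (a nonzero integer) if and only if the corresponding object of $\Rep_{\bar{\Bbb F}_p} S_{n+p}$ has nonzero dimension mod $p$ for all large $p$. This transports the classification of simple objects of $\overline{\underline{\Rep}^{\rm ab} S_n}$ and their fusion rules to the already-computed semisimplification $\overline{\Rep_{\bar{\Bbb F}_p} S_{p+n}}$ of Subsection~\ref{symgroup}.

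Next, for part (i): the restriction functor ${\rm Res}\colon \underline{\Rep}^{\rm ab} S_n \to \Rep_\k S_n \boxtimes \underline{\Rep}^{\rm ab} S_0$ exists because $\underline{\Rep}^{\rm ab} S_n$ has an "$n = n + 0$" branching, compatible under the $p$-adic comparison with the restriction $\Rep S_{p+n} \to \Rep(S_n \times S_p)$. Since restriction along a subgroup is a tensor functor between (abelian) symmetric tensor categories, and in the Deligne setting it sends negligible morphisms to negligible ones — this follows from the characterization in Lemma~\ref{charneg} applied fiberwise via the comparison with Corollary~\ref{invo}/Proposition~\ref{eqi} for $S_{p+n} \supset S_n \times S_p$, whose Sylow $p$-normalizers agree — it descends to a tensor functor on semisimplifications. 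That this descended functor is an equivalence then follows exactly as in Proposition~\ref{eqi}: it is a bijection on simple objects because the inclusion induces an isomorphism of the normalizers of Sylow $p$-subgroups, and by the comparison the set of simples and the fusion rules on both sides are read off from the same modular data.

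For part (ii): by part (i) it suffices to compute $\overline{\underline{\Rep}^{\rm ab} S_0}$. Using the $p$-adic comparison, the simple objects of $\overline{\underline{\Rep}^{\rm ab} S_0}$ and their tensor structure match those of $\overline{\Rep_{\bar{\Bbb F}_p} S_p}$ for $p \gg 0$, which by Proposition~\ref{generat} is ${\rm Ver}_p^+ \boxtimes \Rep_{\bar{\Bbb F}_p}(\Bbb Z/2(p-1), z)$, generated by $\overline{V_{p-2}}$ and $\overline{V_{p-1}}$ with $(p-1)^2$ simples $\overline{V_{p-1}}^{\otimes m}\otimes \wedge^i\overline{V_{p-2}}$. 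The objects $V_{p-2}$, $V_{p-1}$ are the interpolations at $t = 0$ of the objects $\wedge^2 \h$, $\h$ where $\h$ is the "standard" object of $\underline{\Rep}^{\rm ab} S_0$ of dimension $-1$ (the kernel of $\be \oplus \be \to \be$ augmentation-type object); their images $\overline{\h}$, $\wedge^2\overline{\h}$ in the semisimplification are invertible, respectively simple of dimension $-2$. I would then identify the symmetric tensor category generated: $\overline{\h}$ is an invertible object of dimension $-1$, giving a factor $\Rep GL(1)$ acting by sign-twist, while the subcategory generated by the dimension-$(-2)$ object $\wedge^2\overline{\h}$ together with $\overline{\h}$ is, by the categorification-of-$SO(3)$ classification of the appendix (which over characteristic zero gives $\Rep SL(2)$-type or $\Rep SO(3)$-type categories and their Verlinde truncations), identified with $\Rep SL(2)$ in such a way that $\wedge^2\overline{\h}$ is the adjoint and $\overline{\h}$ furnishes the nontrivial central character $-1 \in SL(2)$ coupled to $-1 \in GL(1)$. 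One checks the dimensions ($\dim$ of the $2$-dimensional rep of $SL(2)$ times the sign is $-2$, matching $\wedge^2\overline{\h}$ being $\wedge^2$ of something; more precisely $\overline{\h}\oplus(\text{the }SL(2)\text{-standard twisted by sign})$ realizes the branching), so that $\overline{\underline{\Rep}^{\rm ab} S_0} \cong \Rep(GL(1)\times SL(2),(-1,-1))$, the category of representations on which the element $(-1,-1)$ acts by parity. Combining with (i) gives the claimed equivalence $\overline{\underline{\Rep}^{\rm ab} S_n} \cong \Rep_\k S_n \boxtimes \Rep_\k(GL(1)\times SL(2),(-1,-1))$.

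The main obstacle I expect is making the $p$-adic comparison between $\underline{\Rep}^{\rm ab} S_n$ and $\Rep_{\bar{\Bbb F}_p} S_{p+n}$ fully rigorous at the level of semisimplifications — in particular, verifying that negligibility of morphisms, and the tensor product decomposition into indecomposables, are preserved under Harman's interpolation (so that the Grothendieck ring and the full fusion structure, not merely the list of simple objects, transfer). The second, more concrete, delicate point is pinning down which reductive supergroup the category generated by $\overline{\h}$ is: it must be $(GL(1)\times SL(2))/\langle(-1,-1)\rangle$-representations with the prescribed parity element rather than, say, $GL(1)\times PGL(2)$ or $GL(2)$; this is settled by tracking the action of the invertible object $\overline{\h}$ (which has order $2$ times an infinite-order part, reflecting the $GL(1)$ factor) on the dimension-$(-2)$ generator and by invoking the appendix's classification to rule out the other truncations — here one uses that in characteristic zero no Verlinde-type truncation occurs, so the category is the full $\Rep$ of a reductive (super)group.
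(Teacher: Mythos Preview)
Your overall strategy matches the paper's: use Harman's ultraproduct description $\underline{\Rep}^{\rm ab} S_n=\lim_{p\to\infty}\Rep_{\overline{\Bbb F}_p}S_{n+p}$, note that semisimplification commutes with this ultrafilter limit, and then invoke Proposition~\ref{eqi} for (i) and the explicit computation of $\overline{\Rep_{\overline{\Bbb F}_p}S_p}$ (Proposition~\ref{generat}) for (ii). That part is fine.

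However, your execution of (ii) contains a genuine error in identifying the generators. You claim that $V_{p-2}$ interpolates to $\wedge^2\h$ where $\h$ is the standard object of dimension $-1$, and that $\wedge^2\overline{\h}$ has dimension $-2$. This is wrong: $\overline{\h}$ is invertible of dimension $-1$, so $\wedge^2\overline{\h}$ is either zero or invertible of dimension $1$ (indeed, for an odd invertible $L$ one has $L\otimes L=\wedge^2 L$, which has dimension $1$). The correct limiting object is $V_{-2}$, defined by the short exact sequences
\[
0\to \bold 1\to P\to V_{-1}\to 0,\qquad 0\to V_{-2}\to V_{-1}\to \bold 1\to 0,
\]
where $P$ is the permutation object; this $V_{-2}$ is a simple \emph{subobject} of $V_{-1}$, not an exterior power. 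Its dimension is $-2$, matching $\dim V_{p-2}=p-2\equiv -2\pmod p$.

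A second issue is your appeal to the appendix. The appendix classifies categorifications of the $SO(3)$-type ring $K_\infty$ (fusion rule $X_1X_i=X_{i-1}+X_i+X_{i+1}$), not the $SL(2)$-type ring relevant here. The paper instead argues directly: the ultraproduct of the ${\rm Ver}_p^+$-factors (as $p\to\infty$) has Grothendieck ring equal to that of $\Rep SL(2)$, and since $\dim\overline{V_{-2}}=-2$, the resulting semisimple symmetric tensor category over $\k$ must be $\Rep_\k(SL(2),-1)$; similarly $\overline{V_{-1}}$ is invertible of infinite order and dimension $-1$, giving $\Rep_\k(GL(1),-1)$. No trivalent-category classification is needed.
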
 

\begin{proof} 
We will use the approach of \cite{Ha} to Deligne categories. Namely, let us take $\k=\Bbb C$. 
Then, according to \cite{Ha}, Theorem 1.1(b), we have 
$$
\underline{\Rep}^{\rm ab} S_n=\lim_{p\to \infty} \Rep_{\overline{\Bbb F}_p}S_{n+p}, 
$$
where $\lim$ denotes an appropriate ultrafilter limit (i.e., ultraproduct). More precisely, this means that $\underline{\Rep}^{\rm ab} S_n$ 
is the tensor subcategory in the appropriate ultrafilter limit tensor generated by the "permutation" object $P$ (the analog of the permutation representation). It is easy to see that the ultrafilter limit commutes with the semisimplification, so (i) follows from Proposition \ref{eqi}. 

By virtue of (i), it suffices to check (ii) for $n=0$. In this case, according to Subsection \ref{symgroup}, $\overline{\Rep_{\overline{\Bbb F}_p}S_{n+p}}=\overline{\Rep_{\overline{\Bbb F}_p}S_{p}}$ is generated by $\overline{V_{p-2}}$ and $\overline{V_{p-1}}$. In the ultrafilter limit, the sequences of representations $V_{p-2}$ and $V_{p-1}$ converge to the objects $V_{-2}$ and $V_{-1}$ of $\underline{\Rep}^{\rm ab} S_0$ (of dimensions $-2$ and $-1$, respectively), defined by the (non-split) exact sequences 
$$
0\to \bold 1\to P\to V_{-1}\to 0,\ 0\to V_{-2}\to V_{-1}\to \bold 1\to 0
$$ 
(in particular, $V_{-2}$ is simple). Thus, by Proposition \ref{generat}, the category $\overline{\underline{\Rep}^{\rm ab} S_n}$ is generated by $\overline{V_{-2}}$ and $\overline{V_{-1}}$. Moreover, since 
$$\overline{V_{p-2}}=L_{p-2}\otimes \chi^{p-1},$$ we find that $\overline{V_{-2}}$ generates a subcategory 
with Grothendieck ring of $\Rep_\k SL(2)$. Since $\dim V_{-2}=-2$, this is the category $\Rep_\k(SL(2),-1)$. 
Similarly, since $\overline{V_{p-1}}$ is invertible of order $2(p-1)$, we see that $\overline{V_{-1}}$ is invertible of infinite order, so 
since its dimension is $-1$, it generates $\Rep_\k(GL(1),-1)$. Thus, together these two objects generate the category $\Rep_\k S_n\boxtimes \Rep_\k(GL(1)\times SL(2),(-1,-1))$, as claimed. 
\end{proof} 

\section{Semisimplification of some non-symmetric categories}

Let ${\rm char}(\k)=0$, $q\in \k^\times$, and $H_q$ be the Hopf algebra generated by the grouplike element 
$g$ and element $E$ with defining relation $gEg^{-1}=qE$  and coproduct defined by $\Delta(E)=E\otimes g+1\otimes E$. Then $S(E)=-Eg^{-1}$, so $S^2(E)=gEg^{-1}=qE$. Let $\C_q\subset \Rep H_q$ be the 
category of finite dimensional representations of $H_q$ on which $g$ acts semisimply with eigenvalues being powers of $q$. This category has a pivotal structure defined by the element $g$.  

\subsection{Generic $q$} 
First assume that $q$ is not a root of unity. Then for any
$V\in \C_q$, $E|_V$ is nilpotent, since $E$ maps eigenvectors of $g$
with eigenvalue $\lambda$ to those with eigenvalue $\lambda q$.  Thus,
the indecomposable objects of $\C_q$ are $V_{m_1,m_2}$, where
$m_1\ge m_2$ are integers, namely, Jordan blocks for $E$ of size
$m_1-m_2+1$ containing a nonzero vector $v$ with $gv=q^{m_1}v, Ev=0$.
Then $\dim V_{m_1,m_2}=q^{m_2}+...+q^{m_1}$, which is never zero, so there is no nonzero negligible objects.
It is easy to see that the tensor product of $V_{m_1,m_2}$ obeys the
same fusion rules as representations of $GL_{{\bold q}}(2)$ with highest weights
$(m_1,m_2)$, where ${\bold q}^2=q$. From this we obtain

\begin{proposition}\label{quantgl2}
One has $\overline{\C_q}\cong \Rep GL_{{\bold q}}(2)$. 
\end{proposition}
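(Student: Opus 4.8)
The plan is to identify $\overline{\C_q}$ with $\Rep GL_{\bold q}(2)$ by matching their structure as semisimple ribbon (or at least monoidal) categories: same simple objects, same fusion rules, and same dimensions of nonzero objects. By Proposition \ref{semisi} applied to $\C_q$ (which, being the category of finite-dimensional modules over the Hopf algebra $H_q$ with $S^2$ given by conjugation by the grouplike $g$, is pivotal, and in which the trace of a nilpotent endomorphism vanishes since it is abelian, cf. Remark \ref{remhopf}(2), so Theorem \ref{karpiv} applies — and note condition (2) there holds because $q$ is not a root of unity forces $\dim^L V_{m_1,m_2}=\dim^R V_{m_1,m_2}=q^{m_2}+\dots+q^{m_1}\ne 0$ for every indecomposable), the simple objects of $\overline{\C_q}$ are the images $\overline{V_{m_1,m_2}}$ of the indecomposables, indexed by pairs $m_1\ge m_2$ in $\Z$. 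First I would record the classification of indecomposables in $\C_q$ spelled out in the excerpt: each $V_{m_1,m_2}$ is a single Jordan block for $E$ of size $m_1-m_2+1$ with top $g$-weight $q^{m_1}$, and these exhaust the indecomposables because $E$ acts nilpotently and $g$ acts semisimply with powers of $q$ as eigenvalues, so $\C_q$ is a ``string'' category whose module theory is that of $\k[E]$ twisted by the $\Z$-grading from $g$.

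Next I would establish the fusion rules. The key computation is that for the tensor product in $\C_q$ one has a Clebsch--Gordan-type decomposition
$$
V_{m_1,m_2}\otimes V_{m_1',m_2'}\;\cong\;\bigoplus V_{a,b},
$$
the sum running over the same multiset of highest weights $(a,b)$ that appears in the decomposition of the tensor product of the $GL_{\bold q}(2)$-modules of highest weights $(m_1,m_2)$ and $(m_1',m_2')$ — equivalently, over pairs obtained by the classical $\mathfrak{gl}_2$ Littlewood--Richardson / Pieri rule applied to the weights. One checks this by reducing to the case where one factor is the generating two-dimensional object $V_{1,0}$ (with basis $v, Ev$, $gv=qv$) and its ``dual twist'' $V_{0,-1}$ together with the invertible object $V_{1,1}$ (one-dimensional, $g$ acting by $q$): tensoring an arbitrary Jordan block by $V_{1,0}$ and decomposing the resulting $E$-action — which is an explicit $2\times n$ bidiagonal matrix — gives exactly $V_{m_1+1,m_2}\oplus V_{m_1,m_2-1}$ when $m_1>m_2$ and $V_{m_1+1,m_1}$ when $m_1=m_2$, matching $\Rep GL_{\bold q}(2)$. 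Since $V_{1,0}$ and $V_{1,1}^{\pm1}$ tensor-generate $\C_q$, and the associated ring is the free commutative ring on these generators modulo the one Pieri relation, this determines the whole Grothendieck ring, and it is isomorphic as a based ring (with the basis of simples) to $\mathrm{Gr}(\Rep GL_{\bold q}(2))$ — note $q$ not a root of unity is exactly what makes $GL_{\bold q}(2)$ have the classical (generic) fusion, so no truncation occurs.

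Having matched Grothendieck rings, I would invoke the structure: a semisimple tensor category whose fusion ring is that of $\Rep GL_{\bold q}(2)$ and whose object $\overline{V_{1,0}}$ has the correct dimension $q^0+q^1 = {\bold q}^{-1}+{\bold q}$ (with ${\bold q}^2=q$) and correct quantum-dimension behavior is equivalent to $\Rep GL_{\bold q}(2)$ itself. Concretely, the cleanest route is to produce a braided (ribbon) structure on $\overline{\C_q}$ — but $H_q$ is not quasitriangular, so instead I would directly build a tensor functor. One option: realize $\C_q$ inside $\Rep U_{\bold q}(\mathfrak{sl}_2)\text{-mod}$-type data, or more simply observe that $\C_q\cong \Rep(B_{\bold q})$ for the quantum Borel, so by Corollary \ref{desc}/\ref{invo}-type compatibility (or rather by a direct functor) the semisimplification receives/maps to that of $\Rep GL_{\bold q}(2)$; then a dimension/fusion count forces it to be an equivalence. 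Alternatively, and most robustly, use the uniqueness of a categorification: by a standard rigidity argument a semisimple pivotal category with a given ``$SL(2)$-like'' fusion ring, with all quantum dimensions as prescribed nonzero Laurent polynomials in ${\bold q}$, is determined up to equivalence by that data together with the (unique, up to the evident choices) associativity — so $\overline{\C_q}\cong \Rep GL_{\bold q}(2)$.

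\textbf{Main obstacle.} The crux is not the fusion computation (that is a routine bidiagonal-matrix diagonalization) but rather promoting the Grothendieck-ring isomorphism to a genuine monoidal equivalence: a priori there could be several inequivalent tensor categories with the same fusion ring and dimensions. I expect the paper to handle this either by a clean construction of an explicit tensor functor $\C_q\to \Rep GL_{\bold q}(2)$ inducing $\overline{\C_q}\xrightarrow{\ \sim\ }\Rep GL_{\bold q}(2)$ (e.g.\ via a quantum-group embedding and a semisimplification-compatibility statement), or by citing/using the rigidity of the relevant categorification — which is precisely the kind of classification result the appendix of the paper establishes for $SO(3)$-type rings. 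I would therefore lean on such a categorification-uniqueness statement for the $GL(2)$ fusion ring as the final step, reducing the whole proof to the indecomposable classification plus the Clebsch--Gordan rule plus a dimension check.
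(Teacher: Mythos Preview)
Your outline of the fusion-rule computation is fine and matches what the paper states (as ``easy to see'') just before the proposition. Where you diverge is exactly at the step you flag as the obstacle: promoting the based-ring isomorphism to a tensor equivalence. The paper does \emph{not} appeal to any categorification-uniqueness statement. Instead it constructs an explicit tensor functor $T:\Rep GL_{\bold q}(2)\to\C_q$ by a short Hopf-algebra computation: write down $U_{\bold q}(\mathfrak{gl}_2)$ with generators $g_1,g_2,e,f$, apply the Drinfeld twist $J$ acting on $v\otimes w$ by ${\bold q}^{-rs}$ (where $g_1v={\bold q}^r v$, $g_2w={\bold q}^s w$), and observe that the element $\bar e:=g_1e$ then satisfies $\Delta_J(\bar e)=\bar e\otimes g_1^2+1\otimes\bar e$. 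Hence $g\mapsto g_1^2$, $E\mapsto\bar e$ is a Hopf algebra embedding $\psi:H_q\hookrightarrow U_{\bold q}(\mathfrak{gl}_2)^J$, and restriction along $\psi$ is the tensor functor $T$. Since $T$ sends the simple of highest weight $(m_1,m_2)$ to $V_{m_1,m_2}$, the composite $\bold S\circ T$ is a tensor functor hitting every simple of $\overline{\C_q}$, hence an equivalence.

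Your preferred rigidity route would require a uniqueness theorem for pivotal categorifications of the $GL(2)$ fusion ring, which the paper's appendix does not provide (it treats $SO(3)$ and its truncations); one could try to reduce to that via the determinant grading, but this is extra work and not obviously clean. You do list ``a quantum-group embedding'' among your alternatives --- that is precisely the paper's argument, and it is both shorter and self-contained, so I would make it the main line rather than a fallback.
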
 

\begin{proof}
Let us construct a tensor functor $T: \Rep GL_{{\bold q}}(2)\to \C_q$ such that $\bold S\circ T$ 
is an equivalence $\Rep GL_{{\bold q}}(2)\to \overline{\C_q}$. For this purpose, consider the Hopf algebra $U_{{\bold q}}(\mathfrak{gl}_2)$ 
with generators $g_1,g_2,e,f$ such that $g_1,g_2$ are commuting grouplike elements and 
$$
g_1eg_1^{-1}={\bold q}e,\ g_1fg_1^{-1}={\bold q}^{-1}f,\ 
g_2eg_2^{-1}={\bold q}^{-1}e,\ g_2fg_2^{-1}={\bold q}f,\ 
$$
$$
[e,f]=\frac{g_1g_2^{-1}-g_2g_1^{-1}}{{\bold q}-{\bold q}^{-1}},
$$
$$
\Delta(e)=e\otimes g_1g_2^{-1}+1\otimes e,\ \Delta(f)=f\otimes 1+g_2g_1^{-1}\otimes f.
$$
Let us realize $\Rep GL_{{\bold q}}(2)$ as the category of 
finite dimensional representations 
of $U_{{\bold q}}(\mathfrak{gl}_2)$ on which $g_1,g_2$ act semisimply with eigenvalues being powers of ${\bold q}$. 
Let $J$ be the twist for $U_{{\bold q}}(\mathfrak{gl}_2)$ which acts on $v\otimes w$ by ${\bold q}^{-rs}$ when $g_1v={\bold q}^rv$ and $g_2w={\bold q}^sw$. 
Then the conjugated coproduct $\Delta_J(a):=J^{-1}\Delta(a)J$ of the element $e$ has the form  
$$
\Delta_J(e)=e\otimes g_1+g_1^{-1}\otimes e.
$$  
Thus, setting $\bar e:=g_1e$, we have 
$$
\Delta_J(\bar e)=\bar e\otimes g_1^2+1\otimes \bar e.
$$
We therefore have an inclusion of Hopf algebras $\psi: H_q\hookrightarrow U_{{\bold q}}(\mathfrak{gl}_2)^J$ given by $\psi(g)=g_1^2$, $\psi(E)=\bar e$,
which defines the desired tensor functor $T$.  
\end{proof} 
  
\subsection{Roots of unity} 
Now consider the case when $q$ is a root of unity of some order $n$, which is more interesting. For simplicity assume that $n\ge 3$ is odd, and let $\bold q$ be a root of unity of order $2n$ such that $\bold q^2=q$. 
In this case, by definition, $\C_q=\Rep H_q/(g^n-1)$ is the category of finite dimensional representations of the quotient Hopf algebra $H_q/(g^n-1)$. 
Note that the action of $E$ on objects of $\C_q$ no longer needs to be nilpotent. 
Namely, $E^n$ is a central element which can act on a simple module by an arbitrary scalar. 
However, if $E^n=\lambda\ne 0$ on some simple module $V$, then given an eigenvector $v\in V$ of $g$ with eigenvalue $\gamma$, the elements $v,Ev,...,E^{n-1}v$ are a basis of $V$, so $V$ has dimension 
$\gamma (1+q+q^2+...+q^{n-1})=0$. Thus, the action of $E$ on any non-negligible 
indecomposable module must be nilpotent. This shows that the non-negligible indecomposable 
modules are still $V_{m_1,m_2}$, but now $d:=m_1-m_2+1$ is not divisible by $n$, and also 
$m:=m_1$ is defined only up to a shift by $n$. We will denote this module by $V(m,d)$. 
Thus, the simple objects of $\overline{\C_q}$ are $\overline{V(m,d)}$, where
$0\le m\le n-1$ and $d\ge 1$, not divisible by $n$. Note that  $V(m,1)\otimes V(r,d)=V(r,d)\otimes V(m,1)=V(r+m,d)$ (with addition mod $n$), thus 
$\overline{V(m,1)}\otimes \overline{V(r,d)}=\overline{V(r,d)}\otimes \overline{V(m,1)}=\overline{V(m+r,d)}$. 

To compute the fusion rules in $\overline{\C_q}$, consider the Hopf subalgebra $K_q\subset H_q$ 
generated by $g$ and $E^n$ (this Hopf algebra is commutative and cocommutative, as $E^n$ is a primitive element). Let $\chi$ be the generating character of the cyclic group generated by $g$ such that $\chi(g)=q$. 
Then the Green ring of the category of finite dimensional representations of $K_q/(g^n-1)$ with nilpotent action of $E^n$ is 
$R[\Bbb Z/n]=R[\chi]/(\chi^n-1)$, where $R$ is the representation ring of $SL(2)$. 
Moreover, if $X\in \C_q$ is a negligible indecomposable module over $H_q/(g^n-1)$ then its restriction to $K_q/(g^n-1)$ lies in the ideal 
of $R[\Bbb Z/n]$ generated by $1+\chi+...+\chi^{n-1}$. 
Thus we have a natural homomorphism 
$$
\theta: {\rm Gr}(\overline{\C_q})\to R[\chi]/(1+\chi+...+\chi^{n-1}).
$$
Let us now compute $\theta(\overline{V(m,d)})$. First, it is clear that $\theta(\overline{V(m,1)})=\chi^m$. Also, 
for a simple object $X\in \overline{\C_q}$, let $\nu(X)\in \Bbb Z/2n$ be defined by $\nu(\overline{V(m,d)})=2m-d+1$. 
Then for any direct summand $Z$ in $X\otimes Y$ we have $\nu(Z)=\nu(X)+\nu(Y)$ (since the representations 
$V(m,d)$ extend to $GL_{{\bold q}}(2)$, where the order of ${\bold q}$ is $2n$, and ${\bold q}^{2m-d+1}={\bold q}^{m_1+m_2}$ is determined by the action of the central element $g_1g_2$). 
Thus, the subcategory $\C_q^0$ spanned by $V(m,d)$ with $2m-d+1=0$ modulo $2n$, is a tensor subcategory of $\C_q$. Moreover, it is easy to check that 
the restriction 
$$
\theta: {\rm Gr}(\overline{\C_q^0})\to R[\chi]/(1+\chi+...+\chi^{n-1})
$$
is injective. 

Now, the basis of ${\rm Gr}(\overline{\C_q^0})$ is formed by $\overline{V(m,2rn+2m+1)}$, $r\ge 0$. Consider first the case $r=0$, $0\le m\le \frac{n-3}{2}$. In this case, we get 
$$
\theta(\overline{V(m,2m+1)})=\chi^m+\chi^{m-1}+....+\chi^{-m}. 
$$
This means that the collection of $(n-1)/2$ objects $\overline{V(m,2m+1)}, 0\le m\le (n-3)/2$ 
span a tensor subcategory, whose Grothendieck ring is that of ${\rm Ver}_{\bold q}^+$, 
the even part of the category ${\rm Ver}_{\bold q}$ (the fusion category attached to $U_{\bold q}(\mathfrak{sl}_2)$). 

Now, let $W_i\in R$ be the $i+1$-dimensional irreducible representation of $SL(2)$. 
Then it is easy to see (by looking at bases of representations) that 
$$
\theta(\overline{V(0,2rn+1)})=W_{2r+1}-W_{2r},\ \theta(\overline{V(-1,2rn-1)})=W_{2r-1}-W_{2r}, \ r\ge 1.
$$
This means that the collection of objects $\overline{V(0,2rn+1)}$, $\overline{V(-1,2rn-1)}$, $r\ge 1$ spans a tensor subcategory with Grothendieck ring of $\Rep PGL(2)$, with $\overline{V(0,2rn+1)}\mapsto U_{4r+1}$, 
$\overline{V(-1,2rn-1)}\mapsto U_{4r-1}$, with $U_s$ denoting the irreducible representation of $PGL(2)$ of dimension $s$. Indeed, let us evaluate the characters of $W_i$ at the point $-x$. Then we have 
$$
\overline{V(0,2rn+1)}\mapsto x^{2r+1}+x^{2r}...+x^{-2r-1}, \overline{V(-1,2rn-1)}\mapsto x^{2r}+x^{2r-1}....+x^{-2r},
$$
which implies the statement. 

We also note that the object $V(n-1,n-1)$ is invertible and has order $2$. 

The analysis of the case when $n$ is even is similar, using Theorem \ref{fmps}. 

Thus we obtain 

\begin{theorem} \label{extprod}
The Grothendieck ring of $\overline{\C_q}$ is isomorphic to the Grothendieck ring of the category 
$$
\Vec_{\Bbb Z/n}\boxtimes {\rm Ver}_{\bold q}\boxtimes \Rep PGL(2).  
$$
\end{theorem}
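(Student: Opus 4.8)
The plan is to assemble the description of the Grothendieck ring of $\overline{\C_q}$ from the partial computations already carried out in this subsection, treating the three tensor factors separately and then showing they combine as an external product. First I would organize the simple objects: by the preceding analysis, the simples of $\overline{\C_q}$ are the $\overline{V(m,d)}$ with $0\le m\le n-1$ and $d\ge 1$ not divisible by $n$, and the two invariants $\nu(\overline{V(m,d)})=2m-d+1\in\Bbb Z/2n$ (additive under tensor product, via the $GL_{\bold q}(2)$-lift) and the residue of $m$ modulo $n$ control how objects multiply. The object $V(n-1,n-1)$ is invertible of order $2$ and generates a copy of $\Vec_{\Bbb Z/2}$; more importantly, the objects $V(m,1)$, $0\le m\le n-1$, give a copy of $\Vec_{\Bbb Z/n}$, and tensoring by $V(m,1)$ acts freely, so I would use this to reduce to the tensor subcategory $\C_q^0$ where $2m-d+1\equiv 0\pmod{2n}$.

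Next I would identify the two remaining factors inside $\gr(\overline{\C_q^0})$ using the injective homomorphism $\theta:\gr(\overline{\C_q^0})\to R[\chi]/(1+\chi+\cdots+\chi^{n-1})$ established above. The computations already displayed show that $\{\overline{V(m,2m+1)}:0\le m\le (n-3)/2\}$ spans a subring isomorphic to the Grothendieck ring of ${\rm Ver}_{\bold q}^+$, and that $\{\overline{V(0,2rn+1)},\overline{V(-1,2rn-1)}:r\ge1\}$ spans a subring isomorphic to $\gr(\Rep PGL(2))$. I would then check that these two subrings multiply independently — i.e. that the structure constants of $\gr(\overline{\C_q^0})$ in the basis $\overline{V(m,2rn+2m+1)}$ factor as the product of the structure constants for ${\rm Ver}_{\bold q}^+$ and for $\Rep PGL(2)$ — by comparing fusion rules under $\theta$ against the known fusion of $W_i\in R$; equivalently, by recognizing that ${\rm Ver}_{\bold q}={\rm Ver}_{\bold q}^+\boxtimes\sVec$ and that the relevant ring $R[\chi]/(1+\cdots+\chi^{n-1})$ splits compatibly. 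Combined with the $\Vec_{\Bbb Z/n}$-grading, this yields $\gr(\overline{\C_q})\cong\gr(\Vec_{\Bbb Z/n}\boxtimes{\rm Ver}_{\bold q}\boxtimes\Rep PGL(2))$ (noting that the extra $\Vec_{\Bbb Z/2}$ from $\nu$ is already absorbed into ${\rm Ver}_{\bold q}$ versus ${\rm Ver}_{\bold q}^+$).

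For the case $n$ even, which is only sketched, I would invoke Theorem \ref{fmps} (the categorification result for the relevant Verlinde-type ring) to run the parallel computation of $\theta$ on the modified set of simples; the bookkeeping changes but the shape of the answer does not, since the three-part decomposition is driven by the same two invariants. I expect the main obstacle to be the independence/factorization step: verifying that the fusion of a $\overline{V(m,2m+1)}$-type object with a $\overline{V(0,2rn+1)}$-type object produces exactly the ``tensor product of tilings'' predicted by the external product, rather than some twisted or extended multiplication. This is ultimately a finite check on characters of $SL(2)$ evaluated at $-x$ together with the $\Vec_{\Bbb Z/n}$-twist, but it requires care to confirm that no nontrivial associator-like correction appears at the level of the Grothendieck ring — here it is essential that the statement is only about Grothendieck rings, not about an equivalence of categories, so I only need the multiplicativity of structure constants, not a monoidal functor.
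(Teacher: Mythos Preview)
Your plan is essentially the paper's own argument: the theorem is stated as a summary of the preceding computations, and you have correctly identified the three ingredients (the $\Vec_{\Bbb Z/n}$ from $\overline{V(m,1)}$, the $\Ver_{\bold q}^+$ and $\Rep PGL(2)$ subrings inside $\overline{\C_q^0}$ detected via the injective $\theta$, and the order-$2$ invertible $\overline{V(n-1,n-1)}$ supplying the $\sVec$ part of $\Ver_{\bold q}$), together with Theorem \ref{fmps} for even $n$. One small correction: tensoring by the $\overline{V(m,1)}$ alone only shifts $\nu$ by even amounts, so to reduce an arbitrary simple to $\C_q^0$ you must also use $\overline{V(n-1,n-1)}$ (which has $\nu=n$, odd); you clearly have this in mind in your final parenthetical, but your first paragraph states the reduction slightly imprecisely.
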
  

\begin{corollary} (i) The category spanned by $\overline{V(0,2rn+1)}$, $\overline{V(-1,2rn-1)}$
is a tensor category equivalent to $\Rep OSp(1|2)$. 

(ii) The category spanned by $\overline{V(m,2m+1)}, \overline{V(m,2m+1)}\otimes \overline{V(n-1,n-1)}$, $0\le m\le (n-3)/2$ is  a tensor category equivalent to ${\rm Ver}_{\bold q}$. 
\end{corollary}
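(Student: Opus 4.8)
The plan is to read off both statements from the Grothendieck ring computations made earlier in this subsection, combined with the classification of categorifications of the representation ring and the Verlinde ring of $SO(3)$ from the appendix. In each case that classification reduces the list of possible tensor categories to a short one, and the correct one is pinned down by computing a few categorical dimensions (traces of $g$), all of which are elementary geometric sums in $q$.

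For (i), we showed above that the objects $\overline{V(0,2rn+1)}$ and $\overline{V(-1,2rn-1)}$ ($r\ge 1$), together with $\bold 1$, span a tensor subcategory $\mathcal P$ of $\overline{\C_q}$ whose Grothendieck ring is that of $\Rep PGL(2)=\Rep SO(3)$, via $\overline{V(0,2rn+1)}\mapsto U_{4r+1}$ and $\overline{V(-1,2rn-1)}\mapsto U_{4r-1}$. By the appendix, a tensor category realizing this Grothendieck ring is equivalent to $\Rep PGL(2)$ or to $\Rep OSp(1|2)$. Counting $g$-eigenvalues gives $\dim\overline{V(-1,2rn-1)}=q^{-1}+q^{-2}+\cdots+q^{-(n-1)}=-1$ and $\dim\overline{V(0,2rn+1)}=1$; in particular $\mathcal P$ has simple objects of negative dimension, so it cannot be $\Rep PGL(2)$ (all of whose simples have positive integer dimension), and therefore $\mathcal P\cong\Rep OSp(1|2)$. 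The remaining dimensions are then consistent, $\overline{V(0,2rn+1)}$ and $\overline{V(-1,2rn-1)}$ mapping to the simple objects of $\Rep OSp(1|2)$ of dimensions $(-1)^{2r}$ and $(-1)^{2r-1}$.

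For (ii), we showed that the $(n-1)/2$ objects $\overline{V(m,2m+1)}$ ($0\le m\le(n-3)/2$) span a tensor subcategory $\mathcal A$ with Grothendieck ring that of ${\rm Ver}_{\bold q}^+$, via $\overline{V(m,2m+1)}\mapsto L_{2m+1}$; counting $g$-eigenvalues gives $\dim\overline{V(m,2m+1)}=\sum_{j=-m}^{m}\bold q^{2j}=[2m+1]_{\bold q}=\dim L_{2m+1}$, so by the appendix $\mathcal A\cong{\rm Ver}_{\bold q}^+$. The object $g:=\overline{V(n-1,n-1)}$ is invertible of order $2$, is not among the simple objects of $\mathcal A$, and has $\dim g=q+q^2+\cdots+q^{n-1}=-1$. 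Hence the category $\mathcal E$ in (ii) is the $\Bbb Z/2$-extension $\mathcal A\oplus\mathcal A\otimes g$ of $\mathcal A$; since ${\rm Ver}_{\bold q}$ is itself such an extension of ${\rm Ver}_{\bold q}^+$ (by the invertible object $L_{n-1}$), it remains to identify the two extensions. For $n\ge 5$, $\mathcal E$ is tensor generated by $Y:=g\otimes\overline{V(\tfrac{n-3}{2},n-2)}$, whose fusion rules are those of the generator $L_2$ of ${\rm Ver}_{\bold q}$ and whose dimension is $-[2]_{\bold q}$; comparing the dimensions $\dim g=-1$, $\dim Y=-[2]_{\bold q}$, $\dim\overline{V(m,2m+1)}=[2m+1]_{\bold q}$ with those of ${\rm Ver}_{\bold q}$ equipped with the spherical structure twisted by its $\Bbb Z/2$-grading (a structure tensor-equivalent to the standard one), the appendix's classification forces $\mathcal E\cong{\rm Ver}_{\bold q}$ as a tensor category. (For $n=3$ both categories are $\Vec_{\Bbb Z/2}$.)

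The heart of the argument is the appendix's classification, which confines each subcategory to a short list; granting it, what remains is the dimension bookkeeping above plus checking that the $\Bbb Z/2$-extension appearing in (ii) is the one realizing ${\rm Ver}_{\bold q}$. The delicate point is that the evident bijections on simple objects need not preserve categorical dimension — for instance $\overline{V(n-1,n-1)}$ has dimension $-1$ whereas $L_{n-1}\in{\rm Ver}_{\bold q}$ has dimension $+1$, and $Y$ has dimension $-[2]_{\bold q}$ rather than $[2]_{\bold q}=\dim L_2$ — so one must work throughout with tensor (rather than spherical) equivalences and the twisted spherical structure, or fix conventions by treating small $n$ directly. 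I expect this bookkeeping, and not any conceptual difficulty, to be the main obstacle to a complete proof.
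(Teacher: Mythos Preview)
Your approach is the same as the paper's: both parts are deduced from the Grothendieck-ring computation (Theorem~\ref{extprod}) together with the classification results of the appendix, and the correct member of the short list is selected by computing the categorical dimension of the generating object.

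There is one genuine slip in your treatment of (i). You write that the appendix forces a dichotomy ``$\Rep PGL(2)$ or $\Rep OSp(1|2)$'' and then exclude $\Rep PGL(2)$ by exhibiting a negative dimension. But Theorem~\ref{mps} does not give that dichotomy: it gives the family $\Rep(SO(3)_\q)$ for $\q^2=1$ or $\q^2$ not a root of unity, together with $\Rep OSp(1|2)$ for $\q^2=-1$. Ruling out the single case $\q^2=1$ (i.e.\ $\Rep PGL(2)$) is not enough. The correct argument---which is exactly what the paper does in one line---is that your computation $\dim\overline{V(-1,2n-1)}=-1$ gives $[3]_\q=-1$, hence $\q^2+\q^{-2}=-2$, hence $\q^2=-1$, which is precisely case (ii) of Theorem~\ref{mps}. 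So your dimension calculation already pins down $\q$, and the dichotomy step is both incorrect and unnecessary.

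For (ii), your route is more explicit than the paper's terse citation of Theorem~\ref{fmps} and Remark~\ref{anoth}(iii), and you correctly flag the spherical-structure mismatch ($\dim g=-1$ versus $\dim L_{n-1}=1$). The cleanest way to finish is the one implicit in Remark~\ref{anoth}(iii): the Grothendieck ring of $\mathcal E$ is that of the $\mathfrak{sl}_2$ Verlinde ring (your object $Y$ satisfies the $L_2$-fusion rules), so $\mathcal E$ is a Temperley-Lieb category, and these are classified; the value $\dim Y=-[2]_{\bold q}$ then identifies $\mathcal E$ with ${\rm Ver}_{\bold q}$ equipped with the sign-twisted spherical structure, hence with ${\rm Ver}_{\bold q}$ as a tensor category. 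Your two-step plan (first $\mathcal A\cong{\rm Ver}_{\bold q}^+$, then classify $\Bbb Z/2$-extensions) reaches the same place but is a detour; the paper goes directly through the Temperley-Lieb classification.
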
 

\begin{proof} Part (i) follows fromTheorem \ref{extprod} and Theorem \ref{mps} (ii) 
(since the generating object $V(-1,2n-1)$ corresponding to $U_3$ has dimension $-1$). 

Part (ii) follows from Theorem \ref{extprod} and Theorem \ref{fmps}, Remark \ref{anoth}(iii). 
\end{proof} 

Thus we expect that there is an equivalence of tensor categories
$$
\overline{\C_q}\cong \Vec_{\Bbb Z/n}\boxtimes {\rm Ver}_{\bold q}\boxtimes \Rep OSp(1|2).  
$$
Note that this does not immediately follow from Theorem \ref{extprod} since the external tensor product 
$\C \boxtimes \D$
might have nontrivial associators (for instance this is the case when both categories $\C$ and
$\D$ are pointed).

\section{Surjective symmetric tensor functors between Verlinde categories $\Ver_p(G)$}
 
Let $G$ be a simple algebraic group over $\Bbb Z$, $h=h(G)$ the Coxeter number of $G$, and $p\ge h$ a prime. Let $\k$ be an algebraically closed field of characteristic $p$. Let $\Ver_p(G)=\Ver_p(G,\k)$ be the associated Verlinde category of $G$, i.e., the semisimplification of the category ${\rm Tilt}(G)$ of tilting modules for $G$ over $\k$. For example, $\Ver_p(SL(2))=\Ver_p$. 

Similarly one defines $\Ver_p(G)$ when $G$ is connected reductive. In this case we should require that $p\ge h_i$ for all $i$, where $h_i$ are the Coxeter numbers 
of all simple constituents of $G$. Note that $\Ver_p(G)$ is a fusion category (i.e., finite) if and only if $G$ is semisimple.  

We would like to construct surjective symmetric tensor functors $\Ver_p(G)\to \Ver_p(K)$ for simple $G$. To this end, suppose that $\phi: K\hookrightarrow G$ 
is an embedding of reductive algebraic groups. In this case, we have the following proposition. 

\begin{proposition}\label{tiltres} Let $p$ be sufficiently large, and  
let $T$ be a tilting module for $G$. Then $T|_K$ is also a tilting module.  
\end{proposition}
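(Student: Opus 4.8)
The plan is to reduce the assertion to the case of the fundamental representations of $G$, for which it becomes a statement about restricted weights landing in the lowest alcove for $K$, and then to use semisimplicity. Fix a maximal torus $S$ of $K$ and a maximal torus $T$ of $G$ with $S\subseteq T$, giving a restriction map $X(T)\to X(S)$ of character lattices; I take $K$ to be connected. I will use the following standard facts. (a) For a connected reductive group the tilting modules form a full additive subcategory closed under $\otimes$, $\oplus$ and direct summands (closure under $\otimes$ being the Donkin--Mathieu--Wang theorem that tensor products of good--filtration modules have good filtrations), and every tilting module is a direct sum of indecomposable tilting modules $T(\lambda)$. (b) For $p$ larger than a bound depending only on the root datum of $G$, every fundamental Weyl module $\Delta_G(\omega_i)$ is irreducible, so $\Delta_G(\omega_i)=L_G(\omega_i)=\nabla_G(\omega_i)=T_G(\omega_i)$, and its multiset of $T$-weights is the one given by the Weyl character formula, independent of $p$; since there are finitely many $\omega_i$, this is a uniform bound.

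First I would carry out the reduction. If $\lambda=\sum_i a_i\omega_i$ is dominant, then for $p$ past the bound in (b) the module $\bigotimes_i T_G(\omega_i)^{\otimes a_i}$ is a tilting $G$-module, being a tensor product of tilting modules; its unique maximal weight is $\lambda$, occurring with multiplicity one (each $T_G(\omega_i)=\nabla_G(\omega_i)$ has $\omega_i$ as a multiplicity-one highest weight), so the indecomposable summand realising that weight is $T_G(\lambda)$. Hence every tilting $G$-module is a direct summand of a finite direct sum of modules of the form $\bigotimes_i T_G(\omega_i)^{\otimes a_i}$. Since restriction to $K$ is a tensor functor and, by (a) applied to $K$, tilting $K$-modules are closed under $\otimes$, $\oplus$ and summands, it now suffices to prove that $T_G(\omega_i)|_K=L_G(\omega_i)|_K$ is a tilting $K$-module for each $i$.

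The heart of the argument is this last claim. For $p$ past the bound in (b) the $S$-weights with multiplicities of $L_G(\omega_i)|_K$ form a finite set $\Sigma_i\subset X(S)$ independent of $p$, namely the image under $X(T)\to X(S)$ of the weight multiset of the characteristic-zero irreducible of highest weight $\omega_i$; in particular the highest weight of every composition factor $L_K(\mu)$ of $L_G(\omega_i)|_K$ lies in the finite set $\bigcup_i\Sigma_i$. Now enlarge $p$ so that, in addition, $\langle\mu+\rho_K,\alpha^\vee\rangle<p$ for every dominant $\mu\in\bigcup_i\Sigma_i$ and every positive coroot $\alpha^\vee$ of $K$ (again a uniform bound, depending only on $G$, $K$ and the embedding). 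Then every composition factor of $L_G(\omega_i)|_K$ is an $L_K(\mu)$ with $\mu$ in the open lowest alcove for $K$. By the linkage principle $\Ext^1_K(L_K(\mu),L_K(\nu))=0$ whenever $\mu\ne\nu$ both lie in the lowest alcove (they are not linked), and $\Ext^1_K(L_K(\mu),L_K(\mu))=0$ always; hence $L_G(\omega_i)|_K$ is semisimple. Finally, for $\mu$ in the lowest alcove $L_K(\mu)=\Delta_K(\mu)=\nabla_K(\mu)=T_K(\mu)$, so $L_G(\omega_i)|_K$ is a direct sum of indecomposable tilting $K$-modules, hence tilting. This completes the reduction and the proof.

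The step I expect to be most delicate is making sure that ``sufficiently large'' is a single bound valid simultaneously for all tilting $T$: this is exactly what the reduction to the finitely many fundamental weights buys, together with the stabilisation (for $p$ large) of the characters of the $L_G(\omega_i)$, and hence of the sets $\Sigma_i$, to their characteristic-zero values. One should also pin down the precise ``lowest alcove'' inequalities so that they simultaneously yield $L_K(\mu)=T_K(\mu)$ and the absence of linkage among distinct such $\mu$; for a fixed finite set of weights and $p$ large this is automatic.
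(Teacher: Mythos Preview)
Your proof is correct and follows essentially the same strategy as the paper's: reduce to the fundamental tilting modules $T_G(\omega_i)$ (equivalently, to a direct summand of $V^{\otimes m}$ with $V=\bigoplus_i T_G(\omega_i)$), and then observe that for $p$ large their restrictions to $K$ are semisimple with all composition factors having small highest weights lying in the lowest alcove, hence tilting. The paper states this last point in one sentence (``$V|_K$ is a direct sum of simple $K$-modules with small highest weights \dots which are therefore tilting''), whereas you spell out the linkage-principle justification in detail; the arguments are otherwise the same.
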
  

\begin{proof} The module $T$ occurs as a direct summand in $V^{\otimes m}$, where $V$ is the direct sum of the irreducible $G$-modules whose highest weights generate the cone of dominant weights for $G$. Hence $T|_K$ is a direct summand in $V^{\otimes m}|_K$. But $V|_K$ is a direct sum of simple $K$-modules with small highest weights (compared to $p$), which are therefore tilting. Thus, $T|_K$ is tilting.   
\end{proof} 

\begin{proposition}\label{neglii} Let $p$ be sufficiently large, and let $K$ contain a regular unipotent element of $G$ (equivalently,  a principal $SL(2)$-subgroup of $G$). Then for any negligible tilting module $T$ over $G$, the restriction $T|_K$ is negligible. 
\end{proposition}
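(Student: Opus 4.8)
The plan is to characterize negligibility of tilting modules in a way that survives restriction to $K$. Recall that for a tilting module $T$ over a reductive group in characteristic $p$, negligibility (in the sense of the pivotal category ${\rm Tilt}(G)$) is equivalent, by Lemma \ref{charneg}, to every indecomposable summand of $T$ having dimension $0$ in $\k$, i.e.\ $p \mid \dim_{\k} T_i$ for each indecomposable summand $T_i$. A cleaner way to detect this: an indecomposable tilting module $T(\lambda)$ is non-negligible precisely when it is the unique indecomposable tilting module in the linkage class of a restricted dominant weight lying in the lowest $p$-alcove (equivalently $\lambda$ is in the lowest alcove and $T(\lambda)=L(\lambda)=\Delta(\lambda)$), and these are exactly the objects that descend to simple objects of $\Ver_p(G)$. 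So the statement to prove is: if $T$ is a tilting $G$-module all of whose indecomposable summands have $p \mid \dim$, then the same holds for all indecomposable summands of $T|_K$.

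The key idea is to factor the restriction through the principal $SL(2)$. Since $K$ contains a regular unipotent element of $G$, by Jacobson--Morozov (or its characteristic-$p$ avatar for $p$ large, cf.\ the discussion around the reductive envelope in Section 3) there is a principal homomorphism $\iota\colon SL(2)\to G$ whose image lies in $K$, and the restriction $T|_K$ is in turn restricted along $\iota$ to $SL(2)$; moreover by Proposition \ref{tiltres} each of $T|_K$ and $T|_{SL(2)}$ is again tilting, so it suffices to treat $K=SL(2)$ embedded principally. For the principal $SL(2)$, the restriction of a Weyl module $\Delta_G(\lambda)$ decomposes (for $p$ large relative to the weights involved, so that everything stays in the lowest alcove of $SL(2)$) into Weyl modules $\Delta_{SL(2)}(\langle\lambda+\rho,\alpha^\vee_0\rangle - 1)$-type pieces dictated by the principal grading; the crucial point is that the dimension formula is compatible. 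Concretely, $\dim_{\k} T = \dim^L T$ computed via the pivotal structure is the same before and after restriction along a group embedding (ordinary vector-space dimension reduced mod $p$), so $p\mid \dim T$ is automatic — the real content is that negligibility is about \emph{indecomposable summands}, and an indecomposable tilting module of dimension $0$ might a priori restrict to a sum including a non-negligible piece.

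This is exactly where I would invoke Proposition \ref{negli}: restriction ${\rm Tilt}(G)\to{\rm Tilt}(K)$, passing to semisimplifications, is a functor between (semisimplified) Verlinde categories, and I want to show it carries negligible morphisms to negligible morphisms. The cleanest route is to show the right adjoint to restriction — induction from $K$ to $G$ composed with the projection to tilting modules, or more precisely the relevant functor on fusion quotients — is dimension-scaling with nonzero index, so that Proposition \ref{negli}(ii) applies verbatim and gives that $T|_K$ negligible follows from $T$ negligible. Alternatively, and perhaps more robustly for $p$ large, one reduces to $SL(2)$ principal as above and checks directly on the classification of indecomposable tilting $SL(2)$-modules $T(m)$: $T(m)$ is non-negligible iff $m\le p-2$, and then one verifies that the restriction of a non-negligible indecomposable tilting $G$-module (one supported in the lowest alcove) is a sum of such $T(m)$ with $m\le p-2$, while conversely a negligible indecomposable $G$-tilting has all summands $T(m)$ with $m\ge p-1$ appearing in its $SL(2)$-restriction — this last containment is the main obstacle, since it requires controlling the linkage class of the restricted weights, and the honest argument needs $p$ large enough that the principal embedding does not "wrap" a lowest-alcove weight of $G$ into a non-lowest alcove of $SL(2)$, which is a bound on $p$ in terms of the Coxeter number and the relevant weights. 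I expect that spelling out exactly this alcove-combinatorics estimate, and confirming it is what "sufficiently large $p$" means, is the technically delicate step; everything else is formal once restriction is known to preserve tiltingness (Proposition \ref{tiltres}) and negligibility is reduced to the mod-$p$ dimension of indecomposable summands (Lemma \ref{charneg}).
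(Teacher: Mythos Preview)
Your overall strategy---reduce to a small subgroup where negligibility can be checked directly---is correct, but both of the routes you sketch have problems, and you are missing the one fact that makes the argument immediate. Route (a), invoking Proposition~\ref{negli}, does not apply here: that proposition is stated for surjective tensor functors between \emph{finite spherical tensor categories} (abelian, with exact right adjoint), whereas ${\rm Tilt}(G)\to{\rm Tilt}(K)$ is a functor between Karoubian categories, induction from $K$ to $G$ does not land in tilting modules, and in any case to even have a functor on the semisimplifications $\Ver_p(G)\to\Ver_p(K)$ you would already need the conclusion you are trying to prove. Route (b), the alcove combinatorics for the principal $SL(2)$, is in the right direction, but you yourself flag the crucial containment (that every $SL(2)$-tilting summand of a negligible $G$-tilting module has highest weight $\ge p-1$) as an unproved ``technically delicate step.''

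The paper's proof bypasses all of this by going one step further down: instead of the principal $SL(2)$, restrict all the way to the cyclic group $U\cong\Bbb Z/p$ generated by the regular unipotent $u\in K(\k)$. The single input is a result in Jantzen (\cite{J}, E.13): for $p$ large, a negligible tilting $G$-module restricts to a \emph{projective} (hence free) $\k[U]$-module. Granting this, if $T|_K=\bigoplus_i M_i$ with $M_i$ indecomposable, then each $M_i|_U$ is a direct summand of the projective module $T|_U$, hence itself projective, so $p\mid\dim_\k M_i$ and $M_i$ is negligible. That is the entire argument. The alcove combinatorics you anticipated is exactly what Jantzen's result packages; once you cite it, nothing remains to check.
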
 

\begin{proof} 
Let $u\in K(\k)$  be a regular unipotent element of $G$, and $U\cong \Bbb Z/p$ be the subgroup generated by $u$.  Then by \cite{J}, E13, $T|_U$ is projective, hence negligible. This implies that $T|_K$ is negligible.  
\end{proof} 
 
\begin{corollary}\label{funsemisi} If $K$ contains a regular unipotent element of $G$ then for large enough $p$ we have a surjective tensor functor 
$F: \Ver_p(G)\to \Ver_p(K)$. 
\end{corollary}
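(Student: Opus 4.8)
The plan is to build $F$ by semisimplifying the restriction functor on tilting categories and then invoking Proposition \ref{neglii}. First I would set $\C={\rm Tilt}(G)$ and $\D={\rm Tilt}(K)$, both Karoubian rigid symmetric monoidal categories to which Theorem \ref{karpiv} applies (the quantum trace of a nilpotent endomorphism of a tilting module vanishes, since tilting modules embed in an abelian tensor category). By Proposition \ref{tiltres}, for $p$ large the restriction functor $R\colon\C\to\D$, $T\mapsto T|_K$, is a well-defined symmetric monoidal functor. By Proposition \ref{neglii}, again for $p$ large, $R$ carries negligible tilting modules for $G$ to negligible tilting modules for $K$; since the ideal of negligible morphisms in each of these categories coincides with the ideal of morphisms factoring through negligible objects (this is exactly the special feature of tilting categories noted after Theorem \ref{karpiv}), it follows that $R$ sends negligible morphisms to negligible morphisms. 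Hence $R$ descends to a symmetric tensor functor $F=\overline R\colon\overline{\C}=\Ver_p(G)\to\overline{\D}=\Ver_p(K)$.

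Next I would check that $F$ is a genuine tensor functor between the semisimplifications (exact, $\k$-linear, monoidal with the inherited symmetric structure). Exactness is automatic because $\Ver_p(G)$ is semisimple, so any additive functor out of it is exact; monoidality and compatibility with duals and the braiding are inherited from those of $R$ on the tilting level, passing to the quotient. So the only remaining point is surjectivity of $F$ in the sense of \cite{EGNO}, Subsection 6.3, i.e., that every object of $\Ver_p(K)$ is a subquotient (equivalently, since the target is semisimple, a direct summand) of $F(Y)$ for some $Y\in\Ver_p(G)$.

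For surjectivity I would argue as in the proof of Theorem \ref{equiv}: a simple object $Z$ of $\Ver_p(K)$ is the image $\overline{W}$ of an indecomposable tilting $K$-module $W$ of nonzero dimension. Since tilting modules for $K$ are exactly the direct summands of tensor powers of the "big" module $V_K$ (the sum of irreducibles generating the dominant cone), and since $V_K$ occurs as a summand of $V_G|_K$ for the analogous $G$-module $V_G$ when $p$ is large, every indecomposable tilting $K$-module is a direct summand of $V_G^{\otimes m}|_K=R(V_G^{\otimes m})$ for suitable $m$. Passing to semisimplifications, $Z$ is a direct summand of $F(\overline{V_G}^{\otimes m})=F(\overline{V_G})^{\otimes m}$, so it lies in the image of $F$. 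This proves $F$ is surjective and completes the argument.

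\textbf{Main obstacle.} The delicate point is making "$p$ sufficiently large" uniform and verifying that $V_K$ really appears inside $V_G|_K$ rather than merely that $V_G|_K$ is tilting; the former requires that the highest weights generating the dominant cone of $K$ occur among the weights of $V_G|_K$, which is where one needs to know that the principal (or regular unipotent) embedding is "large enough" and that the relevant Weyl modules remain simple for $p$ above an explicit bound. Once the combinatorics of weights of $V_G|_K$ is pinned down, the rest is the formal semisimplification machinery already set up in Section 2 and the preceding propositions.
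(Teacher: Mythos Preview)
Your argument is correct and follows the paper's own proof essentially verbatim: use Proposition~\ref{tiltres} to get the restriction functor on tilting categories, use Proposition~\ref{neglii} together with the fact (noted after Theorem~\ref{karpiv}) that in tilting categories negligible morphisms are exactly those factoring through negligible objects, and pass to the quotient. The paper simply declares surjectivity to be ``clear'' without further justification, whereas you supply an argument.

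One small comment on the ``main obstacle'': it is milder than you suggest. You do not need $V_K$ to occur as a direct summand of $V_G|_K$. Since $\phi\colon K\hookrightarrow G$ is an embedding, $V_G|_K$ is a faithful representation of $K$; and since (for large $p$) its weights are small, it is a tilting $K$-module whose simple summands are all tilting. Faithfulness alone guarantees that every simple $K$-module with small highest weight---in particular each fundamental tilting module for $K$---occurs in some tensor power of $V_G|_K\oplus (V_G|_K)^*$, hence every indecomposable tilting $K$-module is a summand of the restriction of some $G$-tilting module. So no delicate weight combinatorics beyond Proposition~\ref{tiltres} is needed.
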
  

\begin{proof} By Proposition \ref{tiltres}, we 
have a monoidal functor $${\rm Res}: {\rm Tilt}(G)\to {\rm Tilt}(K),$$ and by Proposition \ref{neglii}, it maps negligible objects to negligible ones. 
Hence, this functor descends to a tensor functor between the semisimplifications $\overline{\rm Res}:  \overline{{\rm Tilt}(G)}\to \overline{{\rm Tilt}(K)}$.
This implies the required statement, since $\overline{{\rm Tilt}(G)}\cong \Ver_p(G)$ (and similarly for $K$), so we can take $F=\overline{\rm Res}$, and it is clear that this functor is surjective. 
\end{proof} 

Corollary \ref{funsemisi} raises a question of classification of pairs $K\subset G$, where $G$  is simple, $K$ is connected reductive, and $K$ contains a regular unipotent element of $G$. Let us call such a pair a {\it principal pair}. It is clear that it suffices to classify 
the corresponding pairs of Lie algebras (which we also call principal); namely, a principal pair of groups $K\subset G$ is determined by a principal pair of Lie algebras ${\mathfrak{k}}\subset \g$ and a central subgroup in $G$. The question of classification of principal pairs of Lie algebras  is solved by the following theorem. 

\begin{theorem} \cite{SS}\label{SSthm} The principal pairs of Lie algebras $\mathfrak{k}\subset \mathfrak{g}$  (with a proper inclusion) are given by the following list: 

(1) $\mathfrak{sp}(2n)\subset \mathfrak{sl}(2n)$, $n\ge 2$; 

(2) $\mathfrak{so}(2n+1)\subset \mathfrak{sl}(2n+1)$, $n\ge 2$;

(3) $\mathfrak{so}(2n+1)\subset \mathfrak{so}(2n+2)$, $n\ge 3$;

(4) $G_2\subset \mathfrak{so}(7)$; 

(5) $G_2\subset \mathfrak{so}(8)$; 

(6) $G_2\subset \mathfrak{sl}(7)$; 

(7) $F_4\subset E_6$. 

(8) ${\mathfrak{sl}_2}\subset \g$ for any simple $\g$. 

Namely, the subalgebras (1),(2),(3),(5),(7) are obtained as fixed points of a Dynkin diagram automorphism, 
(4) is obtained by composing (5) and (3), and (6) is obtained by composing (5) and (2).
\end{theorem}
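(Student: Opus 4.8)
The plan is to recast the hypothesis in terms of principal $\mathfrak{sl}_2$-subalgebras and then classify by the type of $\g$. First I would observe that a connected reductive $\mathfrak{k}\subsetneq\g$ contains a regular unipotent element of $G$ if and only if it contains a conjugate of the principal $\mathfrak{sl}_2$-subalgebra of $\g$. Indeed, if $u\in\mathfrak{k}$ is such an element, then by the Jacobson--Morozov lemma applied inside $\mathfrak{k}$ it is the exponential of a nilpotent $e$ lying in an $\mathfrak{sl}_2$-subalgebra $\mathfrak{s}\subset\mathfrak{k}$; since $u$ is regular unipotent in $G$, $e$ is a regular nilpotent of $\g$, and the Kostant--Mal'cev conjugacy of $\mathfrak{sl}_2$-triples through a fixed nilpotent shows that $\mathfrak{s}$ is $G$-conjugate to the principal $\mathfrak{sl}_2$. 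The converse is clear. So the task becomes: classify, up to conjugacy, the connected reductive proper subalgebras $\mathfrak{k}\subsetneq\g$ containing the principal $\mathfrak{sl}_2$-subalgebra $\mathfrak{s}$ of $\g$.

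Next I would extract the numerical constraints. The semisimple element of $\mathfrak{s}$ acts on $\g$ with eigenvalues $\pm 2m_i$, where $m_1,\dots,m_\ell$ ($\ell=\mathrm{rk}\,\g$) are the exponents of $\g$, and $\g$ decomposes into exactly $\ell$ irreducible $\mathfrak{s}$-summands, all of dimension $\ge3$; in particular $\mathfrak{z}_\g(\mathfrak{s})=0$, hence $\mathfrak{z}_\mathfrak{k}(\mathfrak{s})=0$ and $\mathrm{rk}\,\mathfrak{k}\le\mathrm{rk}\,\g$. Since $\mathfrak{k}$ is an $\mathfrak{s}$-submodule of $\g$, its formal $\mathfrak{s}$-character is dominated coefficientwise by that of $\g$ (so that, when $e$ happens to be principal in $\mathfrak{k}$ as well, the exponents of $\mathfrak{k}$ form a sub-multiset of those of $\g$); this is already a strong restriction that eliminates most candidates and fixes the numerics in the rest.

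For $\g$ of classical type, let $V$ be its defining module. A regular unipotent of $\mathfrak{sl}(V)$ is a single Jordan block, so $V$ restricts to $\mathfrak{s}$ as the irreducible module of dimension $\dim V$; therefore any reductive $\mathfrak{k}\supseteq\mathfrak{s}$ acts irreducibly on $V$, and by Dynkin's description of irreducible subalgebras together with the classification of the simple groups possessing an irreducible module on which some unipotent element is a single Jordan block, one is forced into the cases producing (1), (2), (6) and the tautological case (8). The orthogonal and symplectic $\g$ then follow by matching the Jordan type of the regular unipotent --- one block of size $2n$ in $\mathfrak{sp}(2n)$, one of size $2n+1$ in $\mathfrak{so}(2n+1)$, blocks $(2n-1,1)$ in $\mathfrak{so}(2n)$ --- giving (3), (4), (5). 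For $\g$ exceptional I would use that a regular nilpotent is the one whose weighted Dynkin diagram has all labels equal to $2$, together with Dynkin's tables of maximal reductive subalgebras: one checks the maximal $\mathfrak{k}$ case-by-case and iterates transitively through the non-maximal ones, which leaves only $F_4\subset E_6$ and nothing beyond (8) in types $G_2,F_4,E_7,E_8$. Finally I would verify the stated provenance: realize (1), (2), (3), (5), (7) as the fixed subalgebras $\g^\sigma$ of a diagram automorphism $\sigma$ (the flips of $A_{2n-1}$, $A_{2n}$, $D_{n+1}$, the triality of $D_4$, the involution of $E_6$), checking that the principal nilpotent $e'$ of $\g^\sigma$ is regular in $\g$ --- it lies in the $\sigma$-stable Borel $\mathfrak{b}$, and a count of $\dim\mathfrak{z}_\g(e')$, equivalently the uniqueness of the Borel of $\g$ through $e'$, gives regularity --- while (4) and (6) arise from the factorizations through $\mathfrak{so}(8)$ and $\mathfrak{so}(7)$ indicated in the statement.

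The main obstacle is the completeness of this case analysis, above all for the exceptional groups and, more uniformly, the exclusion of sporadic reductive subalgebras that happen to contain a principal $\mathfrak{sl}_2$: the character- and exponent-inequalities of the second step are very restrictive but do not by themselves close the classification, so one genuinely needs the fine subgroup structure of the simple algebraic groups together with the analysis of unipotent elements of small Jordan type. This is the technical core of the theorem, and it is the reason we content ourselves with quoting \cite{SS}.
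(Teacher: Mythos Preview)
The paper does not prove this theorem at all: it is simply stated with the citation \cite{SS}, and the only additional content is the remark that cases (1), (2), (3), (5), (7) arise as fixed points of diagram automorphisms while (4) and (6) are obtained by composition. There is therefore no ``paper's own proof'' to compare your proposal against.

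Your sketch is a reasonable outline of the strategy behind the Saxl--Seitz classification (reduce to subalgebras containing a principal $\mathfrak{sl}_2$, exploit the exponent/character constraints, handle classical types via the defining representation and exceptional types via Dynkin's tables), and you correctly identify that the hard part --- the exhaustiveness of the case analysis, especially for the exceptional groups --- is precisely what requires the cited reference. Since the paper itself defers entirely to \cite{SS}, your final sentence is in fact the only thing that matches the paper's treatment; the rest is supplementary exposition that goes beyond what the paper provides.
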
 

Note that Theorem \ref{SSthm} holds not only in characteristic zero but also in sufficiently large characteristic (for each fixed $\g$). 

\begin{question} Suppose that the groups $K\subsetneq G$ are fixed. Is it true that for large enough $p$, all surjective tensor functors  
$F: \Ver_p(G)\to \Ver_p(K)$ are given by Corollary \ref{funsemisi} (up to autoequivalences of $\Ver_p(G)$ and $\Ver_p(K)$)? 
\end{question} 
 
\section{Objects of finite type in semisimplifications}

Let $\D$ be a semisimple tensor category and $X\in \D$. Let us say that $X$ is of finite type if the number of isomorphism classes of simple objects occurring in tensor products of $X$ and $X^*$ is finite; i.e., $X$ generates a fusion subcategory $\D_X\subset \D$. If $\overline{\C}$ is the semisimplification of a category $\C$, and $X\in \C$, we will say that $X$ is of finite type if so is $\overline{X}$. It is an interesting question which objects of $\C$ are of finite type. 
Note that according to Example \ref{ex1}(4), $X$ does not have to be of finite type even if $\C$ is the representation category of a finite group (e.g.
$\C=\Rep_\k (\Bbb Z/2)^2$ for ${\rm char}(\k)=2$).  

Yet, a lot of interesting representations of finite groups do turn out to be of finite type, and generate interesting fusion categories. 
The goal of this subsection is to give some examples of such representations. 

Let $H$ be an affine algebraic group over an algebraically closed field $\k$ of characteristic zero. Let $V$ be a rational representation of $H$. 
 Let $H_V$ be the reductive envelope of $H$ inside $GL(V)$ defined in Definition \ref{aksec}. Assume that $H$ contains a regular unipotent element of $H_V$. (e.g. $H=U_n$, the maximal unipotent subgroup of $SL(n)$ and $V=\k^n$; then $H_V=SL(V)$). Note that all this data is defined over some finitely generated subring $R\subset \k$, hence can be reduced modulo $p$ for sufficiently large $p$; namely, given a homomorphism $\psi: R\to \overline{\Bbb F_p}$, 
we have $\psi(R)=\Bbb F_q$, where $q=p^r$ for some $r$, and we have a chain of finite groups 
$H(\Bbb F_q)\subset H_V(\Bbb F_q)\subset GL(V(\Bbb F_q))$. Let $V_\psi=V(\overline{\Bbb F}_p)$; it is a representation of 
these finite groups over $\overline{\Bbb F_p}$. Let $\C:=\Rep_{\overline{\Bbb F_p}}H(\Bbb F_q)$.

\begin{theorem}\label{finty} For large enough $p$, the category $\overline{\C}_{\overline{V_\psi}}$ generated by $\overline{V_\psi}$ is a quotient of $\Ver_p(H_V)=\Ver_p(H_V,\overline{\Bbb F_p})$. In particular, the object $V_\psi$ is of finite type in $\C$. 
\end{theorem}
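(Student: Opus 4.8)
The plan is to lift everything from the finite groups back to algebraic groups in characteristic $p$ and then apply Corollary \ref{funsemisi}. The key observation is that the reductive envelope construction is essentially "motivic" and commutes with reduction mod $p$ in the sense that suffices here.

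First I would set up the algebraic group side in characteristic $p$. Having chosen $\psi: R\to \overline{\Bbb F_p}$, base change the chain $H\subset H_V\subset GL(V)$ to $\overline{\Bbb F_p}$; for large $p$ the group $H_V$ remains reductive and the inclusion $H\hookrightarrow H_V$ still has the property that $H$ contains a regular unipotent element of $H_V$ (this is an open/generic condition on the base, so it persists after reduction for $p$ large). Consider the category ${\rm Tilt}(H_V)$ of tilting modules and its semisimplification $\Ver_p(H_V)=\overline{{\rm Tilt}(H_V)}$. The object $\overline{V_\psi}$, viewed first as the image of $V$ in $\Ver_p(H_V)$, generates a subcategory $(\Ver_p(H_V))_{\overline{V_\psi}}$, and by construction of the reductive envelope inside $GL(V)$ (Definition \ref{aksec}, Corollary \ref{parenv}) together with the fact that $H_V$ is the smallest reductive group through which $V$ factors, this subcategory is essentially all of $\Ver_p(H_V)$ (or a quotient of it — in any case $\Ver_p(H_V)$ surjects onto it).

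Second I would connect the finite group $H(\Bbb F_q)$ to the algebraic group $H$. The representation $V_\psi$ of $H(\Bbb F_q)$ is the restriction of the representation $V$ of the algebraic group $H$ over $\overline{\Bbb F_p}$. I would argue that for $p$ large, restriction along $H(\Bbb F_q)\hookrightarrow H$ (more precisely, the relevant pullback of tilting/representation data) is compatible with semisimplification: negligible morphisms go to negligible morphisms. The mechanism is the same as in Section 4 — one controls the vertex of indecomposable summands, and a regular unipotent element of $H_V$ lying in $H(\Bbb F_q)$ generates a $\Bbb Z/p$ on which negligible modules restrict to projectives (as in the proof of Proposition \ref{neglii} via \cite{J}, E13). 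Concretely: an indecomposable $H(\Bbb F_q)$-summand of $V_\psi^{\otimes m}$ of nonzero dimension must have vertex a full Sylow $p$-subgroup (Proposition \ref{dimnonzero}); comparing with the structure of $V^{\otimes m}$ as an $H_V$-module and using that $V|_{H}$, hence $V^{\otimes m}|_H$, is tilting for $H_V$ with controlled weights (Proposition \ref{tiltres}), one gets that the non-negligible summands over the finite group match the non-negligible summands over the algebraic group, so the generated fusion subcategories agree and $\overline{\C}_{\overline{V_\psi}}$ is a quotient of $\Ver_p(H_V)$.

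The main obstacle I anticipate is making precise the statement that the reductive envelope $H_V$ "reduces mod $p$" correctly and that the chain of comparisons — finite group in char $p$, algebraic group in char $p$, and the char $0$ reductive envelope — is genuinely compatible with the semisimplification functor on all three levels. In particular one must ensure: (a) $H_V\otimes\overline{\Bbb F_p}$ is still reductive and the regular-unipotent condition survives (a bound on $p$ depending on $R$); (b) the tilting/Verlinde formalism of Section 6 applies, so $p$ must also exceed the Coxeter numbers of the simple factors of $H_V$; and (c) the Green-correspondence-type argument identifying non-negligible indecomposables of $H(\Bbb F_q)$ with restrictions of tilting $H_V$-modules is valid, which again needs $p$ large relative to the weights appearing in $V$. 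Once these finitely many constraints on $p$ are collected, combining Proposition \ref{tiltres}, Proposition \ref{neglii}, Corollary \ref{funsemisi}, and Corollary \ref{parenv} gives the surjection $\Ver_p(H_V)\twoheadrightarrow \overline{\C}_{\overline{V_\psi}}$, and finiteness of $\Ver_p(H_V)$ (since $H_V$ is reductive, and in fact semisimple modulo its central torus, so $\Ver_p(H_V)$ is fusion up to a pointed factor) yields that $V_\psi$ is of finite type in $\C$.
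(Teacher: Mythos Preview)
Your core mechanism is the same as the paper's: restrict tilting $H_V$-modules to $H(\Bbb F_q)$ and use that a regular unipotent of $H_V$ lies in $H(\Bbb F_q)$, so by \cite{J}, E13 negligible tiltings become projective over the $\Bbb Z/p$ it generates, hence negligible over $H(\Bbb F_q)$. That one sentence is the whole argument in the paper, and it already gives the surjective tensor functor $\Ver_p(H_V)\to \overline{\C}_{\overline{V_\psi}}$ (using that in ${\rm Tilt}(H_V)$ negligible morphisms factor through negligible objects, so checking on objects suffices).

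The rest of your proposal is unnecessary and in one place overshoots. You do not need to route through the intermediate algebraic group $H$ at all: the paper restricts directly along the inclusion of abstract groups $H(\Bbb F_q)\subset H_V(\overline{\Bbb F_p})$. You also do not need any of the Section 4 machinery (vertices, Proposition \ref{dimnonzero}, Green correspondence); that apparatus compares two \emph{finite} groups and plays no role here. More seriously, your claim that ``the non-negligible summands over the finite group match the non-negligible summands over the algebraic group'' is exactly the Conjecture stated immediately after the theorem, not the theorem itself, and your sketch does not prove it --- the vertex argument only tells you which $H(\Bbb F_q)$-summands have nonzero dimension, not that they all arise from $H_V$-tiltings. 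For the theorem you only need the one-way implication (negligible $\Rightarrow$ negligible), which your regular-unipotent sentence already gives.

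Finally, for finite type when $H_V$ is reductive but not semisimple, the paper's argument is simpler than ``fusion up to a pointed factor'': one-dimensional representations of $H_V$ restrict to characters of the \emph{finite} group $H(\Bbb F_q)$, hence have finite order, so the image category is still fusion.
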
    
  
\begin{proof} 
We have an additive monoidal restriction functor 
$$
{\rm Res}: {\rm Tilt}(H_V(\overline{\Bbb F_p}))\to \Rep_{\overline{\Bbb F}_p} H(\Bbb F_q),
$$ 
hence an additive monoidal functor 
$$
\bold S\circ {\rm Res}:  {\rm Tilt}(H_V(\overline{\Bbb F_p}))\to \overline{\Rep_{\overline{\Bbb F}_p} H(\Bbb F_q)}.
$$ 
Moreover,
the image of a negligible module under the functor ${\rm Res}$ is negligible, as it is already so
after restricting to the group $\Bbb Z/p$ generated by a regular unipotent element of $H_V$ contained in $H(\Bbb F_q)$ (\cite{J}, E13). 
Hence the functor $\bold S\circ {\rm Res}$
descends to a tensor functor
 $\widetilde F: {\rm Ver}_p(H_V)\to \overline{\Rep_{\overline{\Bbb F}_p} H(\Bbb F_q)}$
 (this functor is automatrically exact since the source category is semisimple). 
Moreover, the functor $\widetilde F$ lands in $\overline{\C}_{\overline{V_\psi}}$, so we get a surjective
tensor functor $F: {\rm Ver}_p(H_V)\to \overline{\C}_{\overline{V_\psi}}$. 
In particular, in this case $\overline{\C}_{\overline{V_\psi}}$ is 
a quotient of ${\rm Ver}_p(H_V)$, thus a fusion category if $H_V$ is semisimple.

Moreover, even if $H_V$ is not semisimple but only reductive, $\overline{\C}_{\overline{V_\psi}}$ is still a fusion category, since 1-dimensional representations of $H_V$ obviously have finite order when restricted to the finite group $H(\Bbb F_q)$. 
\end{proof} 

\begin{conjecture} For sufficiently large $p$ the surjective tensor functor $F: {\rm Ver}_p(H_V)\to \overline{\C}_{\overline{V_\psi}}$ is an equivalence. 
\end{conjecture}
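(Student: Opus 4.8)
The plan is to deduce that $F$ is an equivalence from the fact that it is a surjective tensor functor between fusion categories together with injectivity on isomorphism classes of simple objects. The target $\overline{\C}_{\overline{V_\psi}}$ is fusion by Theorem \ref{finty}; to make the source fusion, and indeed for the statement to stand a chance, one first reduces to the case where $H_V$ is semisimple and simply connected --- a central torus in a reductive $H_V$ restricts to $H(\F_q)$ as a finite cyclic group of invertible objects, a finite pointed part that is handled separately. For a surjective tensor functor $F$ between fusion categories, being an equivalence is equivalent to being injective on simple objects, equivalently to $\FPdim(\overline{\C}_{\overline{V_\psi}})=\FPdim({\rm Ver}_p(H_V))$ (see \cite{EGNO}, Chapter 6). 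So everything reduces to separating simple objects.

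First I would pin down the simple objects of the source. For $p$ large, the simple objects of ${\rm Ver}_p(H_V)=\overline{{\rm Tilt}(H_V)}$ are the classes $\overline{T_\lambda}$ of the indecomposable tilting modules $T_\lambda$ with $\lambda$ in the lowest $p$-alcove; this is a finite set $\Lambda_p$ which exhausts the dominant weights of $H_V$ as $p\to\infty$, and any fixed $\lambda$ lies in $\Lambda_p$ once $p$ is large, where moreover $T_\lambda=\Delta(\lambda)=L_\lambda$, whose highest weight, character and restriction to $H$ are reductions mod $p$ of the corresponding characteristic-zero data. Thus $F(\overline{T_\lambda})$ is the non-negligible part of $L_\lambda|_{H(\F_q)}$ in $\Rep_{\overline{\F}_p}H(\F_q)$, and by Proposition \ref{dimnonzero} and Green's correspondence (Theorem \ref{gk}, as in Section 4) this non-negligible part is governed by one indecomposable summand of vertex a Sylow $p$-subgroup $P$ of $H(\F_q)$ --- which may be taken generated by a regular unipotent of $H_V$, exactly the element used in the proof of Theorem \ref{finty} via \cite{J}, E13 --- together with its Green correspondent in $N_{H(\F_q)}(P)$.

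The heart of the argument is to show that for $p$ large this assignment is injective on $\Lambda_p$. The natural route is comparison with characteristic zero: there the irreducibles $L_\lambda$, $\lambda\in\Lambda_p$, restrict to pairwise non-isomorphic simple objects of the subcategory $\C_{\overline V}\cong\Rep H_V\subset\overline{\Rep H}$ of Corollary \ref{parenv}, the separation being witnessed by branching multiplicities of $L_\lambda$ along $H\subset H_V$ (equivalently, in the Green-correspondence picture, along $N_{H_V}\subset H_V$). One would then argue that these multiplicities are unchanged mod $p$ for $p$ large: the decomposition of $V$, hence of tensor powers of $V$, over $H$ and over $H_V$ is $p$-independent in the relevant range, and Weyl-filtration multiplicities of the tilting modules involved stabilize, so the characteristic-$p$ Green correspondents inherit the characteristic-$0$ separation. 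Injectivity together with the surjectivity already established in Theorem \ref{finty} then yields the equivalence.

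The main obstacle is exactly this comparison step, which is why the statement is stated as a conjecture rather than a theorem. There is no off-the-shelf base-change theorem identifying the semisimplification of $\Rep_{\overline{\F}_p}H(\F_q)$ with a reduction mod $p$ of the characteristic-zero semisimplification $\overline{\Rep H}$, and the finite group $H(\F_q)$ --- of bounded rank, with $q=p^r$ for fixed $r$ --- may a priori satisfy accidental identifications that the algebraic group $H_V$ does not; the tilting modules $T_\lambda$ with $\lambda$ near the wall of the $p$-alcove are especially delicate, since one must control summands of tensor powers $V^{\otimes m}$ with $m$ allowed to grow with $p$. Concretely one must rule out collapses $F(\overline{T_\lambda})\cong F(\overline{T_\mu})$ with $\lambda\ne\mu$ in $\Lambda_p$, i.e.\ show the non-negligible part of $L_\lambda|_{H(\F_q)}$ genuinely remembers $\lambda$, equivalently that the fusion rules of $\overline{V_\psi}$ in $\overline{\C}$ coincide with those of $\overline V$ in ${\rm Ver}_p(H_V)$ with no further identifications. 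A bare $\FPdim$ count would also suffice, but seems no easier, since computing $\FPdim(\overline{\C}_{\overline{V_\psi}})$ a priori demands the same control over restrictions of tilting modules to $H(\F_q)$.
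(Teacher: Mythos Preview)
The statement you are addressing is stated in the paper as a \emph{Conjecture}; the paper offers no proof of it. There is therefore nothing to compare your proposal against.

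Your proposal is not a proof either, and you say so yourself. What you have written is a correct outline of the natural strategy --- a surjective tensor functor between fusion categories is an equivalence iff it is injective on simples, so everything reduces to showing that the non-negligible parts of $T_\lambda|_{H(\Bbb F_q)}$ distinguish the $\lambda$ in the lowest $p$-alcove --- together with an honest identification of where that strategy currently stalls. The obstacle you name (no base-change theorem for semisimplifications; loss of control over $T_\lambda|_{H(\Bbb F_q)}$ for $\lambda$ near the alcove wall, where $T_\lambda$ sits in tensor powers $V^{\otimes m}$ with $m$ growing with $p$) is precisely the reason the authors label this a conjecture rather than a theorem.

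One small correction: your invocation of Proposition~\ref{dimnonzero} and the Green correspondence to say the non-negligible part is ``governed by one indecomposable summand of vertex $P$'' overstates what those results give. Proposition~\ref{dimnonzero} only says that summands of nonzero dimension have vertex $P$; it does not say there is a unique such summand in $L_\lambda|_{H(\Bbb F_q)}$. In general $F(\overline{T_\lambda})$ will be a sum of several simple objects of $\overline{\C}$, not one, so the injectivity question is about distinguishing these multisets of simples, not single objects. This does not change your overall assessment of the difficulty, but the picture is slightly more complicated than the one-summand language suggests.
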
 

\begin{remark} Let $\C$ be a symmetric tensor category over a field $\k$ of characteristic $p>0$, $\overline{\C}$ be its semisimplification, and $X\in \C$.
According to Conjecture 1.3 of \cite{O}, there should be a Verlinde fiber functor $F: \overline{\C}_{\overline{X}}\to \Ver_p$ (this is actually a theorem if $X$ is of finite type, see \cite{O}). 
So, in particular, assuming this conjecture, we can define the number $d(X):={\rm FPdim}(F(\overline{X}))$, the Frobenius-Perron dimension of $F(\overline{X})$. A more refined invariant is the full decomposition of $F(\overline{X})$ into the simple objects $L_1,...,L_{p-1}$ 
of $\Ver_p$: $F(\overline{X})=\sum_i a_i(X)L_i$. It is an interesting question how to compute these invariants for a given $X$ (actually, this question can also be asked in characteristic zero, with $\Ver_p$ replaced by ${\rm Supervec}$). Also, one can define the affine group scheme $G_X={\rm Aut}(F)$ in $\Ver_p$ (or ${\rm Supervec}$), and its dimension $\delta(X)$ 
is another interesting invariant of $X$. Note that $X$ is of finite type if and only if $\delta(X)=0$. 
Also note that if $X=V_\psi$ in the setting of Theorem \ref{finty}, then the above invariants can be easily computed using 
the results of \cite{EOV}. 
\end{remark} 

We also have the following proposition. 

\begin{proposition} Let $G$ be a finite group and $V$ a representation of $G$ over an algebraically closed field $\k$ of characteristic $p$ of dimension $d<p$. Suppose that there exists an element $g\in G$ such that the restriction of $V$ to the cyclic group generated by $g$ is indecomposable. Then $V$ is of finite type.
\end{proposition}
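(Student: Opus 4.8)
The plan is to reduce the statement to a known fact about Verlinde categories in characteristic $p$, exactly as in the proof of Theorem \ref{finty}, by exhibiting a cyclic group of order $p$ inside $G$ on which $V$ remains indecomposable, and to use that the restriction functor kills projectives. First I would let $g\in G$ be the given element and $C=\langle g\rangle$ the cyclic group it generates, so that $V|_C$ is indecomposable. Since $\dim V=d<p$ and $V|_C$ is indecomposable, the order of $g$ must be divisible by $p$: otherwise $C$ would be a finite group of order prime to $p$ and $V|_C$ would be semisimple, hence (being indecomposable) simple, but then $d=\dim V$ would have to... well, more directly, an indecomposable module over $\k[\mathbb Z/m]$ with $p\nmid m$ is $1$-dimensional, contradicting that restriction to a cyclic $p'$-group is semisimple unless $\dim V=1$ (and if $\dim V=1$ the statement is trivial since $V$ is then invertible of finite order). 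So write $|g|=p^a m$ with $a\ge 1$ and $p\nmid m$, and replace $g$ by $g^m$, a nonidentity element whose order is a power of $p$; one checks that $V$ restricted to $\langle g^m\rangle$ is still indecomposable (an indecomposable $\k[\mathbb Z/p^a]$-module of dimension $<p$ is a Jordan block of size $d$ for the generator acting unipotently, and raising the generator to the coprime power $m$ preserves the single Jordan block structure). Passing further to a subgroup of order $p$: the Jordan block of size $d<p$ for a generator of $\mathbb Z/p^a$ restricts, along $\mathbb Z/p\hookrightarrow \mathbb Z/p^a$, to the Jordan block of size $d$ for a generator of $\mathbb Z/p$, which is again indecomposable since $d\le p$. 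So we obtain a subgroup $U\cong \mathbb Z/p$ of $G$ with $V|_U$ indecomposable of dimension $d<p$.

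Next I would use the key input: the semisimplification of $\Rep_\k U = \Rep_\k \mathbb Z/p$ is $\mathrm{Ver}_p$ (Example \ref{ex1}(2)), and under the semisimplification functor $\bold S_U:\Rep_\k\mathbb Z/p\to\mathrm{Ver}_p$ the indecomposable of dimension $d$ maps to the simple object $L_d$ of $\mathrm{Ver}_p$ (recall the indecomposables of $\Rep_\k\mathbb Z/p$ are exactly the Jordan blocks of sizes $1,\dots,p$, the last being projective hence negligible, and the first $p-1$ map to $L_1,\dots,L_{p-1}$). The restriction functor ${\rm Res}:\Rep_\k G\to\Rep_\k U$ sends negligible morphisms to negligible morphisms — here one can invoke Corollary \ref{invo} if $[G:U]\ne 0$ in $\k$, but that may fail; instead the cleaner statement is that it sends negligible \emph{objects} (direct sums of indecomposables of dimension $0$ in $\k$) to negligible ones, and more to the point: a projective $\k[\mathbb Z/p]$-module is the same as a module on which the generator acts freely, and restriction certainly maps $\k[G]$-projectives to $\k[\mathbb Z/p]$-projectives, while negligible morphisms in $\Rep_\k\mathbb Z/p$ are precisely those factoring through projectives. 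So ${\rm Res}$ descends to a tensor functor $\overline{{\rm Res}}:\overline{\Rep_\k G}\to\overline{\Rep_\k U}=\mathrm{Ver}_p$.

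Now I would combine these: the composite $\overline{{\rm Res}}$ restricted to the tensor subcategory $\overline{\C}_{\overline V}$ generated by $\overline V$ is a tensor functor into $\mathrm{Ver}_p$, and it sends the generator $\overline V$ to $\overline{V|_U}=\bold S_U(V|_U)=L_d$, which is nonzero. Since $\overline{{\rm Res}}$ is a tensor functor between semisimple categories, it is automatically exact and faithful on the full subcategory it is restricted to — more carefully, a tensor functor out of a semisimple tensor category in which every object is a summand of a tensor power of $\overline V$ has image a fusion subcategory of $\mathrm{Ver}_p$, because $\mathrm{Ver}_p$ has only finitely many simple objects. Concretely: every simple object of $\overline{\C}_{\overline V}$ appears in some $\overline V^{\otimes k}\otimes(\overline V^*)^{\otimes l}$, whose image under $\overline{{\rm Res}}$ is a direct sum of objects of $\mathrm{Ver}_p$; a tensor functor sends a nonzero simple object to a nonzero object (it cannot send a nonzero object to zero, as it preserves the unit and duals, so $0\ne \bold 1 = \overline{{\rm Res}}(\bold 1)$ is a summand of $\overline{{\rm Res}}(X)\otimes\overline{{\rm Res}}(X^*)$ for any nonzero $X$). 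Hence $\overline{{\rm Res}}$ induces an injection of the set of isomorphism classes of simple objects of $\overline{\C}_{\overline V}$ into the (finite) set of simple objects of $\mathrm{Ver}_p$, so $\overline{\C}_{\overline V}$ has at most $p-1$ simple objects and is therefore fusion. Thus $V$ is of finite type.

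The main obstacle I expect is the injectivity claim in the last paragraph — ensuring that distinct simple objects of $\overline{\C}_{\overline V}$ really go to distinct simples of $\mathrm{Ver}_p$ (equivalently, that $\overline{{\rm Res}}$ restricted to $\overline{\C}_{\overline V}$ is faithful). A clean way around it, avoiding any faithfulness discussion, is to argue at the level of fusion rules / Frobenius–Perron dimensions: every simple summand of $\overline V^{\otimes k}$ has $\FPdim$ bounded in terms of $\FPdim(\overline V)$ and $k$, and in a braided fusion-generation argument one bounds the number of simples directly. Alternatively — and this is probably the most robust route — one observes that $\overline V$ has finite \emph{categorical} (Frobenius–Perron) dimension because $\overline{{\rm Res}}(\overline V)=L_d$ does and $\overline{{\rm Res}}$ is a tensor functor, and then invokes the general principle that in a semisimple tensor category an object all of whose tensor powers decompose with bounded multiplicities into boundedly-many simples generates a fusion subcategory; making this precise is where the real work lies, but it is a standard argument once the reduction to $\mathbb Z/p$ and the value $L_d$ are in hand.
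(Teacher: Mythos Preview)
Your approach is genuinely different from the paper's and contains a real gap. You try to restrict \emph{down} from $G$ to $U\cong\Bbb Z/p$ and then use that $\overline{\Rep_\k U}=\Ver_p$ is fusion; for this you need ${\rm Res}:\Rep_\k G\to\Rep_\k U$ to send negligible morphisms to negligible morphisms. But this fails when $p\mid[G:U]$, and your attempted workaround --- that negligible morphisms in $\Rep_\k(\Bbb Z/p)$ are exactly those factoring through projectives --- is false; the paper explicitly notes (right after Theorem~\ref{karpiv}) that $\Rep_\k(\Bbb Z/p)$ is a case where negligible morphisms do \emph{not} all factor through negligible objects. A concrete counterexample in the setting of the proposition: with $p>3$, $d=3$, $A=E_{12}+E_{23}$, let $G=\langle I{+}A,\,I{+}A^2\rangle\cong(\Bbb Z/p)^2\subset GL(3)$ act on $V=\k^3$; then $a=I{+}A$ acts indecomposably and $U=\langle a\rangle$. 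The $G$-module $M$ of dimension $p$ on which $a$ acts trivially and $b=I{+}A^2$ acts by a single Jordan block is indecomposable of dimension $p$, hence a negligible object of $\Rep_\k G$, yet $M|_U\cong\bold 1^{\oplus p}$ is not negligible in $\Rep_\k U$. So ${\rm Res}$ does not descend to a tensor functor $\overline{\Rep_\k G}\to\Ver_p$. (Your side claim that a size-$d$ Jordan block for a generator of $\Bbb Z/p^a$ restricts to a size-$d$ Jordan block along $\Bbb Z/p\hookrightarrow\Bbb Z/p^a$ is also false for $a\ge 2$ and $d<p$: the subgroup then acts trivially. This is harmless once you assume $V$ faithful, since then the $p$-part of $g$ already has order $p$.)

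The paper goes in the opposite direction. Assuming $V$ faithful, $G\subset GL(d)$ and the unipotent part $u$ of $g$ is a \emph{regular} unipotent of $GL(d)$ contained in $G$. One then restricts \emph{from} ${\rm Tilt}(GL(d))$ \emph{to} $\Rep_\k G$. In the tilting category negligible morphisms \emph{do} factor through negligible objects, and a negligible tilting module restricts to a projective $\langle u\rangle$-module (Jantzen, E13), hence to a negligible $G$-module; so the restriction descends to a tensor functor $\Ver_p(GL(d))\to\overline{\Rep_\k G}$. Its image contains $\overline V$, so the subcategory generated by $\overline V$ is a \emph{quotient} of $\Ver_p(GL(d))$ and is therefore fusion (the only infinite part of $\Ver_p(GL(d))$ comes from powers of the determinant, which have finite order on the finite group $G$). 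This also dissolves your unfinished final step: because the functor maps \emph{onto} $\overline{\C}_{\overline V}$ rather than out of it, finiteness is immediate and no injectivity-on-simples argument is needed.
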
 

\begin{proof} We may assume that $V$ is faithful, i.e., $G\subset GL(d)$. Let $u$ be the unipotent part of $g$. Then $u$ is a power of $g$ and a regular unipotent element of $GL(d)$ (as it acts indecomposably on $V$). Hence the restriction functor ${\rm Tilt}_p(GL(d))\to {\rm Rep}_{\k}(G)$ descends to a tensor functor ${\rm Ver}_p(GL(d))\to \overline{{\rm Rep}_{\k}(G)}$. In particular, the tensor category generated by $V$ in the semisimplification of ${\rm Rep}_{\k}(G)$ is finite, as desired.  
\end{proof} 

This proposition can be generalized as follows, with a similar proof: 

\begin{proposition} Let $G$ be a finite group and $V$ a faithful representation of $G$ over an algebraically closed field $\k$ of characteristic $p$ of dimension $d<p$. Suppose that $K\subset GL(d)$ is a reductive subgroup containing $G$, such that $G$ contains a regular unipotent element of $K$. Then $V$ is of finite type. 
\end{proposition}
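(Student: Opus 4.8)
The plan is to run the argument of the preceding proposition (and of Theorem~\ref{finty}), with $GL(d)$ replaced by $K$. Since $V$ is faithful we have inclusions $G\subset K\subset GL(d)$ with $K$ reductive, and an element $u\in G$ which is a regular unipotent of $K$. If $u=1$, then $K$ has no nontrivial unipotent elements, so $K^{\circ}$ is a torus; since any $p$-element of $G$ would be a nontrivial unipotent of $GL(d)$ lying in the torus $K^{\circ}$, there are none, hence $|G|$ is prime to $p$, $\Rep_\k G$ is semisimple, and the statement is trivial. So assume $u\neq 1$; then $u$ is a nontrivial unipotent of $GL(d)$, so $(u-1)^{d}=0$, and since $p>d$ we get $u^{p}=1$, whence $U:=\langle u\rangle\cong\Bbb Z/p$. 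Since $\dim V=d<p$, the $K$-module $V|_K$ is tilting (cf.\ Proposition~\ref{tiltres}), so $\overline{V|_K}\in{\rm Ver}_p(K)$; as tensor products and direct summands of tilting modules are tilting, restriction of representations defines an additive monoidal functor ${\rm Res}:{\rm Tilt}(K)\to\Rep_\k G$, and hence a monoidal functor $\bold S\circ{\rm Res}:{\rm Tilt}(K)\to\overline{\Rep_\k G}$.

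Next I would show that ${\rm Res}$ sends negligible morphisms to negligible ones, so that $\bold S\circ{\rm Res}$ descends to a tensor functor $\widetilde F:{\rm Ver}_p(K)=\overline{{\rm Tilt}(K)}\to\overline{\Rep_\k G}$. In ${\rm Tilt}(K)$ every negligible morphism factors through a negligible object $N$, i.e.\ a direct sum of indecomposable tilting modules of dimension $0$ in $\k$. By \cite{J}, E13, the restriction of such an $N$ to the cyclic group $U$ generated by the regular unipotent $u$ of $K$ is a free $\k U$-module. Now any $\k U$-endomorphism of a free $\k[\Bbb Z/p]$-module has trace $0$ (writing $\k U=\k[s]/(s^{p})$ with $s=u-1$, left multiplication by $\sum c_i s^{i}$ has trace $p c_{0}=0$), so every morphism of $G$-modules that factors through a $G$-module whose restriction to $U$ is free is negligible. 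Applying this to $N|_G$ shows that ${\rm Res}$ kills the negligible morphisms, so $\widetilde F$ is well defined (and automatically exact, its source being semisimple).

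Finally I would observe that $\widetilde F(\overline{V|_K})=\overline V$, so the restriction of $\widetilde F$ to the tensor subcategory of ${\rm Ver}_p(K)$ generated by $\overline{V|_K}$ is a surjective tensor functor onto $\overline{\C}_{\overline V}$, where $\C=\Rep_\k G$. If $K$ is semisimple, then ${\rm Ver}_p(K)$ is a fusion category, so the quotient $\overline{\C}_{\overline V}$ is fusion, i.e.\ $V$ is of finite type. If $K$ is merely reductive, the subcategory of ${\rm Ver}_p(K)$ generated by $\overline{V|_K}$ may be infinite, but the additional invertible objects arise from characters of the central torus of $K$, which restrict to characters of the finite group $G$ and hence have finite order in $\overline{\Rep_\k G}$; thus their images generate a finite pointed subcategory and $\overline{\C}_{\overline V}$ is fusion anyway. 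In all cases $\overline V$, and hence $V$, is of finite type.

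The only substantive input is the projectivity statement \cite{J}, E13 for the restriction of a negligible tilting $K$-module to the group generated by the regular unipotent $u$, and this is precisely where the hypothesis $u\in G$ enters, exactly as in Proposition~\ref{neglii} and Theorem~\ref{finty}. The remaining points (that $V|_K$ is tilting because $d<p$, that negligible morphisms in a tilting category factor through negligible objects, and the finiteness bookkeeping when $K$ is not semisimple) are routine, so I do not anticipate a serious obstacle.
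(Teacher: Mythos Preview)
Your proof is correct and follows essentially the same route as the paper's (which simply says ``with a similar proof'' to the preceding proposition and to Theorem~\ref{finty}): restrict from ${\rm Tilt}(K)$ to $\Rep_\k G$, use \cite{J}, E13 and the regular unipotent $u\in G$ to see that negligible tilting modules restrict to negligible $G$-modules, and conclude that $\overline{V}$ lies in the image of a tensor functor from ${\rm Ver}_p(K)$, hence generates a fusion subcategory. You have in fact filled in several details the paper leaves implicit (the trivial case $u=1$, the explicit reason why a morphism factoring through a $U$-free module is negligible, and the reductive-versus-semisimple bookkeeping).
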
 

\section{Semisimplification of ${\rm Tilt}(GL(n))$ when ${\rm char}(\k)=2$}

In this section we describe the category $\overline{\C}$ when $\C={\rm Tilt}(GL(n))$ and ${\rm char}(\k)=2$. 

First recall Lucas' theorem in elementary number theory: 

Let $a\in \Bbb Z, b\in \Bbb Z_+$ with $p$-adic expansions 
$$
a=\sum_i a_ip^i,\ b=\sum_i b_ip^i,
$$
with $0\le a_i,b_i\le p-1$. 

\begin{proposition}(Lucas' theorem) One has 
$$
\binom{a}{b}=\prod_i \binom{a_i}{b_i}\text{ mod }p.
$$
In particular, $\binom{a}{b}$ is not divisible by $p$ if and only if $b_i\le a_i$ for all $i$. 
\end{proposition}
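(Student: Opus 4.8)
The plan is to argue via generating functions in the polynomial ring $\Bbb F_p[x]$, exploiting the Frobenius (``freshman's dream'') identity $(1+x)^p=1+x^p$ in $\Bbb F_p[x]$, which holds because $\binom{p}{k}\equiv 0 \pmod p$ for $1\le k\le p-1$. Iterating this identity gives $(1+x)^{p^i}=1+x^{p^i}$ in $\Bbb F_p[x]$ for all $i\ge 0$.

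First I would use the base-$p$ expansion $a=\sum_i a_ip^i$ to factor
$$
(1+x)^a=\prod_i\bigl((1+x)^{p^i}\bigr)^{a_i}=\prod_i\bigl(1+x^{p^i}\bigr)^{a_i}
$$
in $\Bbb F_p[x]$ (a finite product, since only finitely many $a_i$ are nonzero). The coefficient of $x^b$ on the left-hand side is $\binom{a}{b}\bmod p$. On the right-hand side, expanding each factor by the ordinary binomial theorem, $(1+x^{p^i})^{a_i}=\sum_{j=0}^{a_i}\binom{a_i}{j}x^{jp^i}$, so the coefficient of $x^b$ is $\sum\prod_i\binom{a_i}{c_i}$, the sum being over all tuples $(c_i)$ with $0\le c_i\le a_i$ and $\sum_ic_ip^i=b$.

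The next step is to observe that, since $0\le c_i\le a_i\le p-1$, such a tuple is exactly a base-$p$ expansion of $b$; by uniqueness of base-$p$ expansions there is at most one, namely $c_i=b_i$ for all $i$, and it exists if and only if $b_i\le a_i$ for all $i$. Hence the coefficient of $x^b$ on the right equals $\prod_i\binom{a_i}{b_i}$ when $b_i\le a_i$ for all $i$, and $0$ otherwise — but in the latter case $\prod_i\binom{a_i}{b_i}=0$ as well, since $\binom{a_i}{b_i}=0$ whenever $b_i>a_i$. Comparing coefficients of $x^b$ yields the claimed congruence, and the divisibility criterion follows at once.

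I expect no genuine obstacle here: the only point needing care is the bookkeeping of the two regimes ($b_i\le a_i$ for all $i$ versus not), which is reconciled uniformly by the convention $\binom{m}{k}=0$ for $0\le m<k$; one should also note that all sums and products involved are finite so no convergence issue arises. (An equivalent alternative is induction on the number of base-$p$ digits, peeling off the lowest digit via $\binom{ap+a_0}{bp+b_0}\equiv\binom{a}{b}\binom{a_0}{b_0}\pmod p$, itself proved by the same $(1+x)^p=1+x^p$ trick; but the single-step generating function argument is cleaner.)
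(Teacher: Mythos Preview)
Your proof is correct and is the standard generating-function argument for Lucas' theorem. The paper, however, does not give a proof at all: it simply recalls Lucas' theorem as a classical fact from elementary number theory and moves on to use it. So there is nothing to compare your approach against.

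One small remark: the paper states the hypothesis as $a\in\Bbb Z$, which in principle allows negative $a$ (with an infinite $p$-adic expansion $a=\sum_i a_ip^i$, $0\le a_i\le p-1$). Your argument, as written, assumes only finitely many $a_i$ are nonzero, i.e.\ $a\ge 0$. This is harmless in context, since the paper only applies the theorem to $a=n\ge 0$; but if you wanted to cover negative $a$ as well, you could either note that the identity $(1+x)^a=\prod_i(1+x^{p^i})^{a_i}$ still holds in $\Bbb F_p[[x]]$ (with the product converging in the $x$-adic topology, since $b$ has only finitely many nonzero digits and $\binom{a_i}{0}=1$), or reduce to the nonnegative case via $\binom{-a}{b}=(-1)^b\binom{a+b-1}{b}$.
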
 

Let $V=\k^n$ be the tautological representation of $GL(n)$. 
Recall that the indecomposable objects of the category $\C$ are the indecomposable direct summands 
in tensor products of the fundamental modules $\wedge^\ell V$, $1\le \ell\le n$. Moreover, it is well known that we can take $\ell$ 
to be only powers of $2$. Indeed, if $\ell=2^{k_1}+\dots +2^{k_r}$ with $0\le k_1<\dots<k_r$ is the binary expansion of $\ell$
then by Lucas' theorem the multinomial coefficient 
$$
N:=\frac{\ell!}{2^{k_1}!\dots 2^{k_r}!}=\binom{\ell}{2^{k_1}}\binom{\ell-2^{k_1}}{2^{k_2}}...
$$
is odd. Now pick a subset of coset representatives $C\subset S_\ell$ mapping bijectively onto the quotient $S_\ell/(S_{2^{k_1}}\times\dots \times S_{2^{k_r}})$, 
and define the operator $P:=\sum_{g\in C}g$ on the space $\wedge^{2^{k_1}}V\otimes\dots \otimes \wedge^{2^{k_s}}V$. Since $|C|=N$, it is easy to see 
(e.g., by picking a basis of $V$) that $P^2=P$ and ${\rm Im}(P)=\wedge^\ell V$, which shows that $\wedge^\ell V$ is naturally a direct summand 
in $\wedge^{2^{k_1}}V\otimes\dots \otimes \wedge^{2^{k_s}}V$, as desired. 

This shows that the semisimplification $\overline{\C}$ is generated 
by the objects $X_m:=\overline{\wedge^{2^m}V}$, with $0\le m\le \log_2n$. 
Note that $\dim_\k X_m=\binom{n}{2^r}$, which by Lucas' theorem is odd if and only if the $m$-th digit (from the right) in the binary expansion of $n$ is $1$. Thus we can keep only $X_m$ with such values of $m$. 
In other words, $\overline{\C}$ 
is generated by $X_{m_1},...,X_{m_s}$, where $n=2^{m_1}+\dots+2^{m_s}$, $0\le m_1<\dots<m_s$, is the binary expansion of $n$.  

\begin{proposition}\label{inverti}
Let $n_j=2^{m_1}+\dots+2^{m_j}$, $1\le j\le s$, and let
$Y_j:=\overline{\wedge^{n_j}V}$. Then $Y_j$ is invertible. Moreover, 
$Y_j=X_{m_j}\otimes Y_{j-1}$ (where $Y_0:=\bold 1$), so $X_{m_j}$ are invertible as well. 
Hence the category $\overline{\C}$ is pointed. 
\end{proposition}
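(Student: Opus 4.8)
The plan is to reduce the whole statement to the single claim that each $X_{m_i}=\overline{\wedge^{2^{m_i}}V}$ is invertible, and to extract that claim from an analysis of the tensor product $\wedge^{2^{m_i}}V\otimes\wedge^{n-2^{m_i}}V$.

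First I would record the elementary input. By Lucas' theorem $\binom{n}{n_j}$ and $\binom{n}{2^{m_i}}$ are odd, since the binary digits of $n_j$ and of $2^{m_i}$ are among those of $n$; hence $\wedge^{n_j}V$ and $\wedge^{2^{m_i}}V$ are indecomposable tilting modules of nonzero $\k$-dimension, so $Y_j$ and $X_{m_i}$ are nonzero simple objects of $\overline{\C}$. Moreover, exactly as in the operator-$P$ argument preceding the Proposition, the composite $\wedge^{n_j}V\to\wedge^{n_{j-1}}V\otimes\wedge^{2^{m_j}}V\to\wedge^{n_j}V$ of comultiplication followed by multiplication is multiplication by $\binom{n_j}{n_{j-1}}$, which is odd (Lucas again, using that $n_{j-1}$ and $2^{m_j}$ have disjoint binary supports) and so $=1$ in $\k$; thus $\wedge^{n_j}V$ is a direct summand of $\wedge^{n_{j-1}}V\otimes\wedge^{2^{m_j}}V$, and likewise of $\wedge^{2^{m_1}}V\otimes\cdots\otimes\wedge^{2^{m_j}}V$. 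Hence in $\overline{\C}$ the simple object $Y_j$ is a direct summand of $Y_{j-1}\otimes X_{m_j}$ and of $X_{m_1}\otimes\cdots\otimes X_{m_j}$.

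Granting that each $X_{m_i}$ is invertible, the rest is formal: then $X_{m_1}\otimes\cdots\otimes X_{m_j}$ is a single invertible object, so the nonzero summand $Y_j$ of it satisfies $Y_j\cong X_{m_1}\otimes\cdots\otimes X_{m_j}$; in particular $Y_j$ is invertible, $Y_j\cong Y_{j-1}\otimes X_{m_j}$, and $\overline{\C}$, being generated as a tensor category by the invertible $X_{m_i}$, is pointed. Thus everything reduces to invertibility of $X_{m_i}$. Writing $k=2^{m_i}$, the canonical isomorphism $(\wedge^kV)^*\cong\wedge^{n-k}V\otimes\det^{-1}$ gives $X_{m_i}\otimes X_{m_i}^*\cong\overline{\wedge^kV\otimes\wedge^{n-k}V}\otimes\overline{\det}^{-1}$, and since $\binom{n}{k}$ is odd, $\wedge^nV=\det$ splits off $\wedge^kV\otimes\wedge^{n-k}V$ as above; so it suffices to prove that for every binary digit $2^m$ of $n$,
\[
\wedge^{2^m}V\otimes\wedge^{n-2^m}V\ \cong\ \wedge^nV\ \oplus\ M
\]
with $M$ a tilting module having no odd-dimensional indecomposable summand, for then $\overline{\wedge^kV\otimes\wedge^{n-k}V}=\overline{\det}$ and $X_{m_i}\otimes X_{m_i}^*\cong\bold 1$.

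This last statement is the main obstacle. The tensor product $\wedge^{2^m}V\otimes\wedge^{n-2^m}V$ has a Weyl ($\Delta$-) filtration with factors $\Delta((2^i,1^{n-2i}))$, $0\le i\le\min(2^m,n-2^m)$, each of multiplicity one (over a characteristic-$0$ field this is a direct sum decomposition), so $M$ has $\Delta$-factors $\Delta((2^i,1^{n-2i}))$ for $1\le i\le\min(2^m,n-2^m)$ and $\dim_\k M=\binom{n}{2^m}^2-1$, which is even. Individually the Weyl modules $\Delta((2^i,1^{n-2i}))$ need not be even-dimensional for $i\ge1$ (e.g.\ $\dim\Delta((2,1,1,1,1))=35$ for $GL(6)$), so the content is that in characteristic $2$ these factors are absorbed into even-dimensional indecomposable tilting modules $T(\mu)$. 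I would establish this by computing the Weyl-filtration multiplicities $[T(\mu):\Delta((2^{i'},1^{n-2i'}))]$ of the two-column tilting modules involved — tractable because the weights $(2^i,1^{n-2i})$ form a chain in the dominance order — and checking, via Lucas' theorem and the hypothesis that $2^m$ is a single binary digit of $n$, that $\sum_{i'}[T(\mu):\Delta((2^{i'},1^{n-2i'}))]\dim\Delta((2^{i'},1^{n-2i'}))$ is even for every $\mu\ne(1^n)$ occurring. I expect this parity bookkeeping, rather than the categorical formalism, to be where the real work lies.
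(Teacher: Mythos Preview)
Your reduction is sound --- indeed the paper argues similarly but in the opposite order, first proving each $Y_j$ invertible and then recovering $X_{m_j}=Y_j\otimes Y_{j-1}^*$. The genuine gap is in the crucial step, which you leave as a plan rather than a proof: you propose to show that $\wedge^{2^m}V\otimes\wedge^{n-2^m}V\cong\det\oplus M$ with $M$ having no odd-dimensional indecomposable summand by computing the Weyl-filtration multiplicities $[T(\mu):\Delta((2^{i'},1^{n-2i'}))]$ of the two-column tilting modules and checking parities. You do not carry this out, and while the chain structure of the weights may make it feasible, it is not evidently routine; in any case the proof as written stops exactly where the content begins.

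The paper bypasses this computation entirely with a restriction trick. To see that $\wedge^{n_j}V\otimes(\wedge^{n_j}V)^*$ has no odd-dimensional indecomposable summand besides $\bold 1$, it restricts to the Levi $G=GL(n_j)\times GL(n-n_j)$: writing $V=V'\oplus V''$ one has $\wedge^{n_j}V|_G=\bigoplus_{i}\wedge^{n_j-i}V'\otimes\wedge^i V''$, and since $n-n_j$ is divisible by $2^{m_{j+1}}>n_j$, Lucas' theorem gives $\dim\wedge^iV''=\binom{n-n_j}{i}$ even for $0<i\le n_j$; each such summand is therefore negligible in ${\rm Tilt}(G)$, and only $\det_{V'}$ survives in the semisimplification. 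Hence $(\wedge^{n_j}V\otimes(\wedge^{n_j}V)^*)|_G$ has $\bold 1$ as its sole odd-dimensional indecomposable summand, and the same then holds over $GL(n)$. The identical device works in your framework for $X_{m_i}$ directly: restrict to $GL(2^{m_i})\times GL(n-2^{m_i})$ and observe that $\binom{2^{m_i}}{j}$ is even for $0<j<2^{m_i}$ and that $\binom{n-2^{m_i}}{2^{m_i}}$ is even as well, so again only the one-dimensional summand survives. This replaces all of your proposed tilting bookkeeping.
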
 

\begin{proof} To prove that $Y_j$ is invertible, it suffices to show that the module $\wedge^{n_j}V\otimes (\wedge^{n_j}V)^*$ has a unique indecomposable direct summand of odd dimension, namely $\bold 1$ (which is a direct summand using the evaluation and coevaluation maps). To this end, it suffices to show that this is so after restriction of this representation to any subgroup $G\subset GL(n)$. Take $G=GL(n_j)\times GL(n-n_j)$. 
Then $V=V'\oplus V''$, where $\dim_\k V'=n_j$ and $\dim_\k V''=n-n_j$, and 
$$
\wedge^{n_j}V|_G=\oplus_{i=0}^{n_j}\wedge^{n_j-i}V'\otimes \wedge^iV''.
$$
Note that $n-n_j$ is divisible by $2^{m_j+1}>n_j$, hence by Lucas' theorem $\wedge^iV''$ is even dimensional for any $0<i\le n_j$. This means that any odd-dimensional indecomposable direct summand in 
the $G$-module \linebreak $\wedge^{n_j}V\otimes (\wedge^{n_j}V)^*$ is $\wedge^{n_j}V'\otimes (\wedge^{n_j}V')^*=\bold 1$. 

Now, $Y_j$ is a direct summand in $X_{m_j}\otimes Y_{j-1}$. Since $X_{m_j}$ is simple and  $Y_{j-1}$ is invertible, we get $Y_j=X_{m_j}\otimes Y_{j-1}$, i.e., $X_{m_j}=Y_j\otimes Y_{j-1}^*$ is invertible.  
\end{proof} 

\begin{proposition}\label{indep} The objects $X_{m_j}$, $j=1,\dots,s$ are multiplicatively independent. 
In other words, we have $\overline{\C}=\Vec_\k(\Bbb Z^s)$, where the group $\Bbb Z^s$ 
is generated by the isomorphism classes of the objects $X_{m_j}$ (or $Y_j$). 
\end{proposition}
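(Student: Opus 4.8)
The plan is to show that the $X_{m_j}$ generate a free abelian group of rank $s$ by producing, for each $j$, a multiplicative "degree" functional that detects $X_{m_j}$. The natural source of such functionals is the action of $GL(n)$ on wedge powers: each $\wedge^{2^m}V$ carries a well-defined central character, since the center $\Bbb G_m\subset GL(n)$ of scalars acts on $\wedge^\ell V$ by the character $z\mapsto z^\ell$. Passing to the semisimplification does not destroy this: the central character is additive under tensor product and is locally constant on indecomposable summands, so it descends to a homomorphism $w:\overline{\C}_{\mathrm{pt}}\to \Bbb Z$ from the group of invertible objects, sending $Y_j\mapsto n_j$ and hence $X_{m_j}\mapsto 2^{m_j}$ (using Proposition \ref{inverti} and $X_{m_j}=Y_j\otimes Y_{j-1}^*$).

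Next I would argue that a single $\Bbb Z$-valued functional is not quite enough — it only shows $\sum_j a_j 2^{m_j}=0\Rightarrow$ nothing, since the $2^{m_j}$ are $\Bbb Z$-linearly dependent for $s\ge 2$. So the first step must be upgraded: I want a finer invariant that separates the distinct binary digits. The cleanest way is to restrict along a maximal torus, or better, along the product subgroup $G=\prod_j GL(2^{m_j})$ coming from the binary decomposition $V=\bigoplus_j V_j$ with $\dim V_j=2^{m_j}$, exactly as in the proof of Proposition \ref{inverti}. Under this restriction $\wedge^{n_j}V|_G$ contains $\wedge^{2^{m_1}}V_1\otimes\cdots\otimes\wedge^{2^{m_j}}V_j$ as the unique odd-dimensional indecomposable summand (by the Lucas-theorem parity argument already given), which is the determinant character $\det_{V_1}\otimes\cdots\otimes\det_{V_j}$ of $G$. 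Thus restriction induces a monoidal functor $\overline{\C}\to\overline{\Rep G}$ carrying $Y_j$ to an invertible object whose class in $X^*(G)\cong\Bbb Z^s$ is $(1,\dots,1,0,\dots,0)$ with $j$ ones. These $s$ vectors are visibly a basis of $\Bbb Z^s$, so the $Y_j$ (equivalently the $X_{m_j}$) are multiplicatively independent; combined with the fact from Proposition \ref{inverti} that $\overline{\C}$ is pointed and generated by the $X_{m_j}$, this gives $\overline{\C}\cong\Vec_\k(\Bbb Z^s)$.

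The main obstacle I anticipate is making precise that the semisimplification functor $\bold S$ carries the restriction functor $\mathrm{Res}:{\rm Tilt}(GL(n))\to{\rm Tilt}(G)$ to a well-defined monoidal functor $\overline{\C}\to\overline{{\rm Tilt}(G)}$ and that it sends $Y_j$ to the claimed invertible object rather than merely to something containing it as a summand. This is handled by the Krull–Schmidt argument already used in the proof of Proposition \ref{inverti}: once we know $Y_j$ is invertible in $\overline{\C}$, its image is invertible, hence indecomposable, hence equals the unique odd-dimensional summand of $\wedge^{n_j}V|_G$, which is the determinant character. One should also check that $\mathrm{Res}$ genuinely descends — i.e. sends negligible (= even-dimensional-indecomposable) tilting modules to negligible ones — but for the diagonal subgroup $G$ this is immediate from Lucas' theorem applied summand by summand, or alternatively one simply restricts first to the maximal torus, where every object is a sum of one-dimensional characters and negligibility is transparent. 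Everything else is bookkeeping with binary expansions.
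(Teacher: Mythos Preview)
Your approach coincides with the paper's: restrict to a Levi and read off a character from the unique odd-dimensional summand. The paper argues by contradiction using a two-block Levi $GL(r)\times GL(n-r)$, choosing $r=2^{m_\ell}+\cdots+2^{m_s}$ to isolate the largest index $m_\ell$ occurring in a putative relation; you instead restrict once and for all to the full binary Levi $G=\prod_i GL(2^{m_i})$. Your choice has the pleasant feature that the unique odd-dimensional summand of $\wedge^{n_j}V|_G$ is genuinely one-dimensional (the character $\det_{V_1}\cdots\det_{V_j}$), so the images of the $Y_j$ form the triangular basis $(1,\ldots,1,0,\ldots,0)$ of $\Bbb Z^s$ directly.

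One point needs repair. Your two proposed justifications that $\mathrm{Res}$ descends to a monoidal functor $\overline\C\to\overline{{\rm Tilt}(G)}$ both fail: Lucas' theorem does \emph{not} show that an arbitrary even-dimensional indecomposable tilting $GL(n)$-module has only even-dimensional indecomposable $G$-summands, and over the maximal torus nothing nonzero is negligible, so that route cannot possibly send negligibles to negligibles. Fortunately no such functor is needed, and the paper does not construct one. Argue directly, as the paper does: a relation $\prod_j Y_j^{p_j}\cong\prod_j Y_j^{q_j}$ in $\overline\C$ means the unique odd-dimensional indecomposable $GL(n)$-tilting summands $L,R$ of the two sides are isomorphic; restrict the isomorphism $L\cong R$ to $G$. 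Your Lucas computation, combined with the fact that in each ${\rm Tilt}(GL(2^{m_i}))$ tensoring with an even-dimensional indecomposable yields only even-dimensional indecomposable summands (this is just $0\otimes X=0$ in the semisimplification of each factor), shows that every odd-dimensional indecomposable $G$-summand of $L|_G$ is the single character $\prod_j(\det_{V_1}\cdots\det_{V_j})^{p_j}$; comparing with $R$ forces $p_j=q_j$ for all $j$.
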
 

\begin{proof} Assume the contrary, i.e., that we have a nontrivial relation 
\begin{equation}\label{ide}
X_{m_1}^{\otimes p_1}\otimes\dots \otimes X_{m_\ell}^{\otimes p_\ell}\cong
X_{m_1}^{\otimes q_1}\otimes\dots \otimes X_{m_\ell}^{\otimes q_\ell},
\end{equation}
where $\ell\le s$ and $p_i,q_i\in \Bbb Z_{\ge 0}$ 
with $p_iq_i=0$ for $1\le i\le \ell$ and $p_\ell\ne 0$ (so $q_\ell=0$). 
Let $r=2^{m_\ell}+\dots+2^{m_s}$, so that 
$n-r=2^{m_1}+\dots+2^{m_{\ell-1}}$. Consider the subgroup $G=GL(r)\times GL(n-r)\subset GL(n)$. Then $V=V'\oplus V''$, where 
$\dim_\k V'=r$ and $\dim_\k V''=n-r$. For each $1\le j\le \ell$, we have 
$$
\wedge^{2^{m_j}}V|_G=\oplus_{i=0}^{2^{m_j}}\wedge^i V'\otimes \wedge^{2^{m_j}-i}V'',
$$
Since $r$ is divisible by $2^{m_\ell}$ and $n-r<2^{m_\ell}$, all the indecomposable summands in this direct sum have even dimension except $i=0$ for $j<l$ and $i=2^{m_\ell}$ for $j=\ell$. 
Thus, the only odd-dimensional indecomposable direct summand of $\wedge^{2^{m_j}}V|_G$ 
is a trivial representation of $GL(r)$ except for $j=\ell$, in which case $GL(r)$ acts 
on this summand by the determinant character. Thus, $GL(r)$ acts trivially on the unique odd-dimensional indecomposable direct summand 
on the right hand side of \eqref{ide} but by $\det^{p_\ell}$ on such summand the left hand side, which is a contradiction. 
\end{proof} 

\begin{corollary} We have $\overline{{\rm Tilt}(SL_n)}=\Vec_\k(\Bbb Z^{s-1})$ where $\Bbb Z^{s-1}$ is generated by 
the isomorphism classes of $X_{m_1},\dots,X_{m_s}$ modulo the relation $X_{m_1}\dots X_{m_s}=1$, and $\overline{{\rm Tilt}(PGL_n)}=\Vec_\k(\Bbb Z^{s-1})$
where $\Bbb Z^{s-1}$ is the group of $X_{m_1}^{\otimes n_1}\dots X_{m_s}^{\otimes n_s}$, $n_1\dots,n_s\in \Bbb Z$, where $\sum_i 2^{m_i}n_i=0$. 
\end{corollary}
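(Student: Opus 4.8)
The plan is to deduce both statements from Proposition~\ref{indep}, i.e.\ from the identification $\overline{{\rm Tilt}(GL_n)}\cong\Vec_\k(\Bbb Z^s)$, by comparing $GL_n$ with $SL_n$ and with $PGL_n$. For $SL_n$ I would start from the restriction functor ${\rm Res}\colon{\rm Tilt}(GL_n)\to{\rm Tilt}(SL_n)$: it sends an indecomposable tilting module $T_{GL_n}(\lambda)$ to the indecomposable tilting module $T_{SL_n}(\bar\lambda)$, where $\bar\lambda$ is the image of $\lambda$ in the weight lattice $\Bbb Z^n/\Bbb Z(1,\dots,1)$ of $SL_n$, without changing dimensions, and it does not turn a non-isomorphism between indecomposable tiltings into an isomorphism (by the local structure of their endomorphism algebras together with the $GL(1)$-weight of each indecomposable $GL_n$-tilting). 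Hence by Lemma~\ref{charneg} it carries negligible morphisms to negligible morphisms, so it descends to a surjective tensor functor $\overline{\rm Res}\colon\Vec_\k(\Bbb Z^s)=\overline{{\rm Tilt}(GL_n)}\to\overline{{\rm Tilt}(SL_n)}$; consequently $\overline{{\rm Tilt}(SL_n)}\cong\Vec_\k(\Bbb Z^s/B)$ for the subgroup $B=\{\gamma\in\Bbb Z^s:\overline{\rm Res}(\gamma)\cong\bold 1\}$. Since $\wedge^nV$ is the determinant character of $GL_n$ and restricts to $\bold 1$ on $SL_n$, and since by Proposition~\ref{inverti} its class in $\Bbb Z^s$ equals $e_1+\cdots+e_s$ (with $e_j$ the class of $X_{m_j}=\overline{\wedge^{2^{m_j}}V}$), we get $e_1+\cdots+e_s\in B$.

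The remaining point for $SL_n$ is that $B=\Bbb Z(e_1+\cdots+e_s)$. Given $\gamma=(n_1,\dots,n_s)\in B$, write $X_{m_1}^{\otimes n_1}\otimes\cdots\otimes X_{m_s}^{\otimes n_s}=\overline T$ for the unique indecomposable tilting $GL_n$-module $T$ of nonzero dimension; then $\overline{\rm Res}(\gamma)=\overline{T|_{SL_n}}$ and $T|_{SL_n}$ is indecomposable of the same nonzero dimension, so $\overline{T|_{SL_n}}\cong\bold 1$ forces $T|_{SL_n}\cong\bold 1$. Thus $T=T_{GL_n}(\lambda)$ with $\bar\lambda=0$, i.e.\ $\lambda\in\Bbb Z(1,\dots,1)$, so $T\cong\det^{\otimes k}$ for some $k\in\Bbb Z$, and therefore $X_{m_1}^{\otimes n_1}\otimes\cdots\otimes X_{m_s}^{\otimes n_s}=\overline{\det^{\otimes k}}=\bigl(\overline{\wedge^nV}\bigr)^{\otimes k}=X_{m_1}^{\otimes k}\otimes\cdots\otimes X_{m_s}^{\otimes k}$ in $\Vec_\k(\Bbb Z^s)$. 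By the multiplicative independence of $X_{m_1},\dots,X_{m_s}$ over $GL_n$ (Proposition~\ref{indep}) this gives $(n_1,\dots,n_s)=(k,\dots,k)$. Hence $B=\Bbb Z(e_1+\cdots+e_s)$, and as this is a primitive vector the quotient is free of rank $s-1$, so $\overline{{\rm Tilt}(SL_n)}\cong\Vec_\k(\Bbb Z^{s-1})$, generated by $X_{m_1},\dots,X_{m_s}$ modulo the single relation $X_{m_1}\cdots X_{m_s}=1$, as claimed.

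For $PGL_n$ the argument is formal. Since $PGL_n=GL_n/GL(1)$ for the central torus $GL(1)\subset GL_n$, a $GL_n$-module descends to $PGL_n$ exactly when all of its weights $\mu$ have total degree $|\mu|:=\sum_i\mu_i$ equal to zero; hence ${\rm Tilt}(PGL_n)$ is identified with the full monoidal subcategory of ${\rm Tilt}(GL_n)$ spanned by the indecomposable tiltings $T_{GL_n}(\lambda)$ with $|\lambda|=0$. On such modules the $\Hom$-spaces, traces and dimensions coincide with those computed in ${\rm Tilt}(GL_n)$, so passing to semisimplifications yields a fully faithful tensor embedding $\overline{{\rm Tilt}(PGL_n)}\hookrightarrow\overline{{\rm Tilt}(GL_n)}=\Vec_\k(\Bbb Z^s)$ whose essential image is the full subcategory on the simple objects of total degree $0$. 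The total degree is a monoidal functor ${\rm Tilt}(GL_n)\to\Vec_\k(\Bbb Z)$ with $\wedge^{2^{m_j}}V\mapsto 2^{m_j}$, so on semisimplifications it becomes the homomorphism $\Bbb Z^s\to\Bbb Z$, $(n_1,\dots,n_s)\mapsto\sum_i2^{m_i}n_i$; the degree-$0$ simple objects of $\overline{{\rm Tilt}(GL_n)}$ are therefore exactly the $X_{m_1}^{\otimes n_1}\otimes\cdots\otimes X_{m_s}^{\otimes n_s}$ with $\sum_i2^{m_i}n_i=0$, and they span $\Vec_\k$ of the kernel of this homomorphism, which is free abelian of rank $s-1$. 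This gives $\overline{{\rm Tilt}(PGL_n)}\cong\Vec_\k(\Bbb Z^{s-1})$ with $\Bbb Z^{s-1}=\{(n_1,\dots,n_s):\sum_i2^{m_i}n_i=0\}$.

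I expect the main obstacle to be the $SL_n$ bookkeeping: verifying that restriction ${\rm Tilt}(GL_n)\to{\rm Tilt}(SL_n)$ sends indecomposable tiltings to indecomposable tiltings preserving dimensions and reflects non-isomorphisms (so that Lemma~\ref{charneg} applies and $\overline{\rm Res}$ is a well-defined surjective tensor functor), and that an indecomposable $GL_n$-tilting whose restriction to $SL_n$ is the unit must be a power of the determinant character. Once these are settled, both halves of the corollary follow mechanically from Propositions~\ref{inverti} and~\ref{indep}; the $PGL_n$ half reduces entirely to the identification of ${\rm Tilt}(PGL_n)$ with the total-degree-zero full subcategory of ${\rm Tilt}(GL_n)$.
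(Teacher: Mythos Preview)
Your argument is correct and follows the same route the paper has in mind: both parts are deduced from Proposition~\ref{indep} by relating ${\rm Tilt}(SL_n)$ and ${\rm Tilt}(PGL_n)$ to ${\rm Tilt}(GL_n)$ via restriction and inflation, respectively. The paper records the proof only as ``straightforward from Proposition~\ref{indep}''; you have simply written out the details it leaves implicit (that restriction to $SL_n$ preserves indecomposability, dimensions, and non-isomorphisms, hence descends to semisimplifications with kernel generated by $\overline{\det}=X_{m_1}\cdots X_{m_s}$; and that inflation identifies ${\rm Tilt}(PGL_n)$ with the total-degree-zero full subcategory of ${\rm Tilt}(GL_n)$).
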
 

\begin{proof} Straightforward from Proposition \ref{indep}. 
\end{proof} 

\appendix
\section{Categorifications of based rings attached to $SO(3)$.}
The goal of this Appendix is to deduce some classification results on categorifications
of certain based rings from the results of \cite{MPS}.
We assume that the base field $\k$ is algebraically closed of characteristic zero. 

\subsection{}
We consider the based ring $K_\infty$ (see \cite[Chapter 3]{EGNO}) with basis 
$X_i, i\in {\mathbb Z}_{\ge 0}$ and with multiplication determined by
$$X_0=1,\; X_1X_i=X_iX_1=X_{i-1}+X_i+X_{i+1},\; i\ge 1.$$
It is a classical fact that $K_\infty$ is isomorphic to the representation ring of the group
$SO(3)$ via the map sending $X_i$ to a unique irreducible representation of dimension
$2i+1$.

We will consider {\em pivotal categorifications} of $K_\infty$, that is, semisimple pivotal tensor categories
$\C$ equipped with an isomorphism of based rings $K(\C)\simeq K_\infty$ (cf. \cite[4.10]{EGNO}).
Any such category $\C$ is automatically spherical since every object of $\C$ is self-dual.
Let $X\in \C$ be an object such that its class $[X]$ corresponds to $X_1\in K_\infty$.
Let $d\in \k$ be the dimension of $X$. There exists $\q\in \k$ such that 
$d=[3]_\q=\q^2+1+\q^{-2}$.

\begin{theorem} \label{mps}
(i) Assume that $\q^2=1$ or that $\q^2$ is not a root of 1.
Then $\C$ is equivalent to the category $\Rep(SO(3)_\q)$ (see \cite[Section 4]{MPS}).

(ii) Assume $\q^2=-1$. Then $\C$ is equivalent to the category $\Rep(OSp(1|2))$ (see \cite[Section 4]{MPS}).
\end{theorem}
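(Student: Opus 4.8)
The goal is to identify a pivotal categorification $\C$ of the based ring $K_\infty$, knowing its dimension parameter $d=[3]_\q$, with one of the two "universal" categories produced in \cite{MPS}. The plan is to reduce the classification to the structure of the hom-space $\Hom_\C(X\ot X, X\ot X)$ and the single scalar governing a rotation/crossing, which is exactly the data controlled by the Morrison--Peters--Snyder analysis; then invoke their uniqueness statement in the appropriate regime of $\q$.

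First I would record the elementary consequences of the based-ring structure: since $X_1X_1=X_0+X_1+X_2$ and each $X_i$ has a self-dual lift, $X$ is self-dual and $\dim\Hom_\C(\mathbf 1,X\ot X)=1$, $\dim\Hom_\C(X,X\ot X)=1$, and $\dim\Hom_\C(X\ot X,X\ot X)=3$. This gives a distinguished trivalent vertex $X\ot X\to X$ (unique up to scalar) and lets me realize $\C$ as (a quotient of) a planar-algebra--type category generated by a single trivalent vertex with loop value $d$, i.e. place $\C$ inside the framework of \cite{MPS}. Concretely, I would check that $X$ together with the vertex satisfies the defining relations of the $SO(3)$ (or Jones--Wenzl) planar algebra: the bigon and triangle relations follow from the one-dimensionality of $\Hom_\C(\mathbf 1,X\ot X)$ and $\Hom_\C(X,X\ot X)$, and the only remaining relational freedom is the coefficient with which the "H" and "I" diagrams combine with the identity and the cap-cup in $\Hom_\C(X\ot X,X\ot X)$ — a $3\times 3$ linear-algebra computation whose solution is forced by sphericality and by associativity of the vertex.

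Next I would feed this data into \cite[Section 4]{MPS}: their results show that a pivotal category generated by such a trivalent vertex with loop parameter $d=[3]_\q$ is, under the stated hypothesis on $\q$, equivalent to $\Rep(SO(3)_\q)$, while in the degenerate case $\q^2=-1$ (where $[3]_\q=-1$ and the relevant Jones--Wenzl projector $p_2$ or $p_3$ behaves specially) the unique such category is $\Rep(OSp(1|2))$. The point is that \cite{MPS} already carries out the hard classification; what I must supply is the bridge showing that an abstract pivotal categorification of $K_\infty$ produces exactly one of their generators-and-relations presentations, with no extra parameters. The semisimplicity of $\C$ together with $K(\C)\cong K_\infty$ pins down the fusion ring, hence the dimensions of all relevant hom-spaces, so the presentation is rigid.

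The main obstacle I anticipate is the case analysis at $\q^2=-1$: there the naive trivalent-vertex relations degenerate (the would-be projector onto $X_2$ or $X_3$ is ill-defined over $\q$, and one must instead argue directly that $X\ot X\ot X$ decomposes as predicted and that the only consistent associativity datum yields $OSp(1|2)$ rather than, say, a twisted version). I would handle this by treating $[3]_\q=-1$ as a limit and checking that the relations of \cite{MPS} still have a unique solution, using that $\C$ is honestly semisimple (so no negligible phenomena intervene) and that $\dim\Hom_\C(X^{\ot 3},X^{\ot 3})$ is computed from $K_\infty$ to be the Catalan-type number forced by the fusion rules. Once uniqueness of the relational datum is established in each regime, the theorem follows by directly matching with the categories $\Rep(SO(3)_\q)$ and $\Rep(OSp(1|2))$ exhibited in \cite[Section 4]{MPS}.
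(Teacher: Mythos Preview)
Your approach is essentially the paper's: both arguments show that $\C$ is governed by a single self-dual object with a trivalent vertex of loop value $d=[3]_\q$, and then hand the problem to the Morrison--Peters--Snyder classification. The paper packages the endgame more cleanly than you do: it forms the subcategory $\C_0\subset\C$ generated by the four structural morphisms, passes to the quotient $\tilde\C=\C_0/\N$ by negligibles, and uses the chain $\dim\Hom_{\tilde\C}\le\dim\Hom_{\C_0}\le\dim\Hom_\C$ together with the identification $K(\tilde\C)\cong K_\infty$ coming from \cite[Theorem~A]{MPS} to force all inequalities to be equalities; hence $\C$ is the Karoubian envelope of $\tilde\C$. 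This avoids having to verify by hand that the diagrammatic relations exhaust $\Hom_\C$.

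There is one genuine point you skip. The trivalent morphism $X\to X\otimes X$ you choose need not be rotationally invariant a priori, so what you actually produce is only a \emph{twisted} trivalent category in the sense of \cite[Section~7]{MPS}, and \cite[Theorem~A]{MPS} does not apply directly. The paper deals with this by reading off $d_k:=\dim\Hom_\C(\mathbf 1,X^{\otimes k})=1,0,1,1,3$ for $k\le 4$ from the fusion ring and invoking \cite[Proposition~7.1]{MPS}, which says that when $d\ne 2$ any twisted trivalent category with these bounds is in fact untwisted. Your sketch would have to rediscover this, and your claim that the square relation is ``forced by sphericality and by associativity of the vertex'' is not quite the mechanism: it is the smallness of the $d_k$ (especially $d_4=3$) that pins things down. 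Finally, no separate limit argument is needed at $\q^2=-1$; \cite[Theorem~A]{MPS} already covers that value of $d$ and returns $\Rep(OSp(1|2))$ directly, so the case split in the statement is just a matter of reading off which output MPS gives.
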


\begin{proof} Let $\C_0$ be the monoidal subcategory of $\C$ generated by $X$ and by (nonzero)
morphisms $\be \to X\ot X, X\ot X\to \be, X\to X\ot X, X\ot X\to X$. Thus:
$$\mbox{objects of}\; \C_0=X^{\ot n},\; n\in {\mathbb Z}_{\ge 0},$$
$$\mbox{morphisms of}\; \C_0=\mbox{morphisms in}\; \C \; \mbox{which are linear combinations}$$
$$\mbox{of tensor products and compositions of the four morphisms above}.$$
\vskip .05in
Let $\N$ be the ideal of negligible morphisms in $\C_0$, and let $\tilde \C=\C_0/\N$ be the quotient.
Clearly
\begin{eqnarray}\label{less}
&\dim \Hom_{\tilde \C}(X^{\ot m},X^{\ot n})\le \dim \Hom_{\C_0}(X^{\ot m},X^{\ot n})&\\
\nonumber &\le \dim \Hom_{\C}(X^{\ot m},X^{\ot n}).&
\end{eqnarray}
The category $\tilde \C$ is an example of a (possibly twisted) {\em trivalent category}, as defined in \cite[Section 7]{MPS} (thus $\tilde \C$ satisfies the assumptions of \cite[Definition 2.1]{MPS}
except, possibly, the rotational invariance of the morphism $X\to X\ot X$). 
Moreover, the numbers $\dim \Hom_{\tilde \C}(\be, X^{\otimes k})$ are bounded by
the numbers $d_k=\dim \Hom_{\C}(\be, X^{\otimes k})$, which are easily computable using
the isomorphism $K(\C)\simeq K_\infty$. In particular, $d_k=1,0,1,1,3$ for $k=0,1,2,3,4$.
Since $d\ne 2$, \cite[Proposition 7.1]{MPS} implies that $\tilde \C$ is not twisted, that is,
$\tilde \C$ is a trivalent category in the sense of \cite[Definition 2.1]{MPS}. Thus by
\cite[Theorem A]{MPS}, $\tilde \C$ is equivalent to $\Rep(SO(3)_\q)$ or $\Rep(OSp(1|2))$;
in particular, the Grothendieck ring $K(\tilde \C)$ of (the Karoubian envelope of) $\tilde \C$ is isomorphic to $K_\infty =K(\C)$. Thus, 
the inequalities in \eqref{less} are, in fact, equalities,
and the category $\C$ is equivalent to the Karoubian envelope of $\tilde \C$. The result
follows.
\end{proof}

\begin{remark} (i) We expect that the assumption on $\q$ in Theorem \ref{mps} is automatically
satisfied, i.e., there is no categorification of $K_\infty$ where $\q^2\ne \pm 1$ is a root of 1.
Moreover, it seems likely that the assumption on pivotality of $\C$ can also be dropped.

(ii) D.~Copeland and H.~Wenzl recently obtained a classification of {\em ribbon categorifications} of
the based rings  $K(\Rep(SO(n)_\q))$ for any $n$. In particular this implies Theorem \ref{mps}
(and Theorem \ref{fmps} below) under an additional assumption that the category $\C$
is braided.
\end{remark}

\subsection{Fusion categories}
For an integer $l\ge 2$ we consider the based ring $K_l$ with basis $X_i,\; i=0, \ldots ,l$ and
with multiplication determined by
$$X_0=1,\; X_1X_i=X_{i-1}+X_i+X_{i+1},\; i=1, \ldots l-1,\; X_1X_l=X_{l-1}.$$
The ring $K_l$ can be considered as a truncated version of the ring $K_\infty$. 
It is well known that the ring $K_l$ has categorifications of the form $\Rep(SO(3)_\q)=\Ver_\q^+$,
where $\q$ is a suitable root of 1. 

\begin{theorem} \label{fmps}
Let $\C$ be a pivotal fusion category which is a categorification of $K_l$
where $l>2$. Then there is a tensor equivalence $\C \simeq \Rep(SO(3)_\q)$ where $\q$
is a primitive root of 1 of degree $4(l+1)$.
\end{theorem}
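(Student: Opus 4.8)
The plan is to run the argument of Theorem~\ref{mps} in the finite (fusion) setting, feeding in finiteness of $\C$ at the two places where the infinite-case proof uses the explicit form of $K_\infty$. First I would choose an object $X\in\C$ with $[X]=X_1$. Since every $X_i$ is self-dual in $K_l$, every object of $\C$ is self-dual and hence $\C$ is automatically spherical, and from $X_1^2=X_0+X_1+X_2$ together with semisimplicity of $\C$ the spaces $\Hom_\C(\be,X\ot X)$ and $\Hom_\C(X,X\ot X)$ are one-dimensional. I would then fix nonzero morphisms $\be\to X\ot X$, $X\ot X\to\be$, $X\to X\ot X$, $X\ot X\to X$, let $\C_0\subset\C$ be the monoidal subcategory on the objects $X^{\ot n}$ generated by these four morphisms, and put $\tilde\C:=\C_0/\N$, exactly as in the proof of Theorem~\ref{mps}. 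As there, the chain of inequalities \eqref{less} holds verbatim, so $\dim\Hom_{\tilde\C}(\be,X^{\ot k})\le d_k:=\dim\Hom_\C(\be,X^{\ot k})$, where the $d_k$ are now the structure constants of $K_l$; for $k\le 4$ one gets $d_k=1,0,1,1,3$, since the truncation of $K_l$ at level $l\ge 3$ does not affect products of at most four copies of $X_1$. Hence $\tilde\C$ is a trivalent category in the (possibly twisted) sense of \cite{MPS}, and by the Morrison--Peters--Snyder classification its Karoubian envelope $\tilde\C^{\rm kar}$ lies on an explicit list; the small values $(d_0,\dots,d_4)=(1,0,1,1,3)$ already restrict $\tilde\C^{\rm kar}$ to the $SO(3)$/$OSp$ part of the list, namely $\Rep(OSp(1|2))$, the generic $\Rep(SO(3)_\q)$, or $\Rep(SO(3)_\q)$ for $\q$ a root of unity (a genuinely twisted option would force the categorical dimension $d$ of $X$ to equal $2$).

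Next I would use that $\C$ is finite. Since $X_1$ generates $K_l$ as a ring, $X$ tensor-generates $\C$, so $\C$ coincides with the full Karoubian subcategory it generates, whose indecomposables therefore account for all of the finitely many simple objects of $\C$. If $\tilde\C^{\rm kar}$ were infinite, then $K(\tilde\C^{\rm kar})\cong K_\infty$, so $\dim\Hom_{\tilde\C}(\be,X^{\ot k})$ would equal the (eventually much larger) structure constants of $K_\infty$, contradicting \eqref{less} for large $k$ because the structure constants of $K_l$ grow only like $(1+2\cos\frac{\pi}{l+1})^k$. Hence $\tilde\C^{\rm kar}$ is one of the finite categories $\Rep(SO(3)_\q)=\Ver_\q^+$, with Grothendieck ring $K_{l'}$ for some $l'$; comparing structure constants through \eqref{less} gives $l'\le l$, and matching the number of simple objects (equivalently, checking that the truncation relation $X_1X_{l'}=X_{l'-1}$ is the one present) pins down $l'=l$ and forces $\q$ to be a primitive root of unity of degree $4(l+1)$ (the odd-order analogue $\Ver_\q^+$ with $\q$ of order $4l+6$ is excluded, since there the top object satisfies $X_1X_l=X_{l-1}+X_l$, not the relation of $K_l$). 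In particular $d=[3]_\q\ne 2$ for $l>2$, so the twisted alternatives do not occur. With $K(\tilde\C^{\rm kar})\cong K_l\cong K(\C)$ in hand, the two inequalities in \eqref{less} become equalities; thus $\C_0$ is full in $\C$, the ideal $\N$ vanishes on $\C_0$, and $\C$ is equivalent to $\tilde\C^{\rm kar}\simeq\Rep(SO(3)_\q)$, as claimed.

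The step I expect to be the main obstacle is pinning down $l'=l$ and the exact order of $\q$ in the second paragraph: having to identify, among the finite trivalent categories produced by the \cite{MPS} classification, precisely the one whose Grothendieck ring is $K_l$ — this is what makes \eqref{less} an equality and hence what yields $\C\simeq\tilde\C^{\rm kar}$. Concretely it requires knowing the fusion rules of $\Ver_\q^+$ at each root of unity well enough to match them against $K_l$, and, along the way, handling the $d=2$/twisted case, which one rules out a posteriori (since $d=[3]_\q=2$ would force $\q$ to have order $12$ and hence $l=2$). Everything else is a routine transcription of the proof of Theorem~\ref{mps}.
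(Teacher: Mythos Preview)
Your approach diverges from the paper's at a crucial point: the paper first determines the categorical dimension $d=\dim X$ (equivalently, the parameter $\q$) \emph{before} invoking \cite{MPS}, whereas you apply the \cite{MPS} classification first and try to single out the correct $\q$ afterward. The paper's determination of $d$ uses the theory of formal codegrees \cite[Corollary~2.15]{Or3}: the dimension field of a spherical fusion category contains all formal codegrees $f_{\phi_\q}$ of the ring homomorphisms $\phi_\q:K_l\to\k$, and computing these forces the dimension homomorphism itself to be $\phi_\q$ with $\q^2$ a primitive $2(l+1)$-th root of unity (the only remaining alternative, $\q^2$ of order $l+1$, is ruled out by exhibiting a simple object of dimension zero). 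Once $\q$ is pinned down in this way, $\Rep(SO(3)_\q)$ automatically has Grothendieck ring $K_l$, so the inequalities \eqref{less} become equalities and $\C\simeq\tilde\C^{\rm kar}$ follows. This also disposes of $d=2$ cleanly: for $l>2$ no $\q$ with $\q^2$ primitive of order $2(l+1)$ gives $[3]_\q=2$.

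Your proposed substitute --- ``matching the number of simple objects'' or ``checking that the truncation relation $X_1X_{l'}=X_{l'-1}$ is the one present'' --- does not close the gap you yourself flag. The category $\tilde\C$ is a \emph{quotient} of a subcategory of $\C$; nothing a priori forces $\tilde\C^{\rm kar}$ to have as many simples as $\C$, and the truncation relation holding in $K(\tilde\C^{\rm kar})=K_{l'}$ is a relation after passing to the quotient, not one that must already hold in $K(\C)=K_l$. Concretely, your argument yields only $l'\le l$ via \eqref{less}, together with $(l'+1)\mid (l+1)$ (since $d=[3]_\q$ with $\q^2$ of order $2(l'+1)$ must also be the value of some homomorphism $K_l\to\k$, forcing $\q^{2(l+1)}=\pm 1$); neither excludes proper divisors. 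Likewise, your a~posteriori exclusion of $d=2$ is circular: it presupposes that $\tilde\C$ has already been identified as $\Rep(SO(3)_\q)$ with $\q$ of order $4(l+1)$, which is exactly what remains to be shown. Without an independent determination of $d$ --- and the formal codegree argument is what supplies it --- the proof cannot be completed along the line you propose.
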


\begin{proof} We start by classifying homomorphisms $\phi: K_l\to \k$. Any such homomorphism
is uniquely determined by $\phi(X_1)$; if $\phi_\q(X_1)=[3]_\q=\q^2+1+\q^{-2}$ then
$\phi_\q(X_i)=[2i+1]_\q=\frac{\q^{2i+1}-\q^{-2i-1}}{\q-\q^{-1}}$; in particular, the existence of $\phi_\q$ is equivalent to the equation
$$[3]_\q[2l+1]_\q=[2l-1]_\q\; \Longleftrightarrow \q^{2(l+1)}=\pm 1,\; \q^2\ne 1.$$ 
Clearly $\phi_\q=\phi_{\q'}$ if and only if $\q^2=\q'^{\pm 2}$. One computes easily
the {\em formal codegree} $f_{\phi_\q}$ (see e.g. \cite[Section 2.3]{Or3}) of $\phi_\q$:
$$f_{\phi_\q}=\left\{
\begin{array}{cc}
l+1&\mbox{if}\; \q^2=-1,\\
-\frac{2(l+1)}{(\q-\q^{-1})^2}&\mbox{if}\; \q^2\ne -1.
\end{array}\right.
$$

The category $\C$ is spherical, as all its objects are self-dual. Hence, by \cite[Corollary 2.15]{Or3},
the {\em dimension field} (i.e., the subfield of $\k$ generated by the dimensions of the objects) of $\C$ 
contains all $f_{\phi_\q}$; thus, the degree of the dimension field over the rationals is $\ge \frac12\varphi(2(l+1))$,
where $\varphi$ is the Euler function (this is the degree of the field generated by $\q^2+\q^{-2}$, where
$\q^2$ is a primitive root of 1 of degree $2(l+1)$). It follows that the dimension
homomorphism $K_l=K(\C)\to \k$ is $\phi_\q$, with $\q^2$ being  a primitive root of 1 of degree
$r$, where $r$ divides $2(l+1)$ and $\varphi(r)=\varphi(2(l+1))$. 
Thus, either $r=2(l+1)$ or $r=l+1$.
The latter case is possible only if $l+1$ is odd, and in this case 
$\phi_\q(X_{l/2})=0$, so $\phi_\q$ cannot be the dimension homomorphism.

Thus, we have proved that the dimension homomorphism $$K_l=K(\C)\to \k$$ coincides with
the dimension homomorphism $$K_l=K(\Rep(SO(3)_\q)\to \k,$$ where $\q$ is a primitive
root of 1 of degree $4(l+1)$. 

The rest of the proof is parallel to the proof of Theorem \ref{mps}.
We consider the subcategory $\C_0$ of $\C$ generated by the morphisms 
$$\be \to X\ot X, X\ot X\to \be, X\to X\ot X, X\ot X\to X$$ and its quotient $\tilde \C$ by negligible
morphisms. Then one deduces from \cite[Theorem A]{MPS} that the
(Karoubian envelope of) the category $\tilde \C$ is equivalent to $\Rep(SO(3)_\q)$, which
has the same Grothendieck ring as $\C$. This implies that $\tilde \C\cong\C$, and
the result follows.
\end{proof}

\begin{remark}\label{anoth} (i) The categorifications of $K_l$ with $l=2$ are completely classified in
\cite{EGO}. This case is somewhat different from the case $l>2$, see \cite[Section 7]{MPS}.

(ii) It is conjectured that any fusion category has a pivotal structure. Thus we expect
that the pivotality assumption in Theorem \ref{fmps} is superfluous. 

(iii) Another family of truncations of the ring $K_\infty$ is given by rings $\tilde K_l, \; l\ge 1$
with basis $X_0, \ldots, X_l$ and with multiplication
$$X_0=1,\; X_1X_i=X_{i-1}+X_i+X_{i+1},\; i=1, \ldots l-1,\; X_1X_l=X_{l-1}+X_l.$$
Such rings are also categorified by $\Rep(SO(3)_\q)$ where $\q$ is a suitable root of 1. 
It is easy to see that there are no other categorifications $\C$ of $\tilde K_l$, since 
$\C \boxtimes \Vec_{{\mathbb Z}/2{\mathbb Z}}$ would have been an example of a Temperley-Lieb category
generated by the object $X_l\boxtimes \bold 1$.
\end{remark}


\begin{thebibliography}{9999} 

\bibitem[A]{A} J. L. Alperin, Local representation theory,
Modular Representations as an Introduction to the Local Representation Theory of Finite Groups,
Cambridge Studies in Advanced Mathematics, 11, Cambridge University Press, 1993.  

\bibitem[AK]{AK} Y. Andre, B. Kahn, with an appendix by P. O'Sullivan, 
Nilpotence, radicaux et structures mono\'idales, arXiv:math/0203273,
Rendiconti del Seminario Matematico dell'Universita di Padova 108 (2002), 107--291. 

\bibitem[BW]{BW} J. Barrett and B. Westbury, Spherical categories,
Adv. Math.
143
(1999), 357--375.

\bibitem[B1]{B1} D.~J.~Benson, Representations and Cohomology, I: Basic representation theory of finite groups and associative algebras,
Cambridge University Press, 1995. 

\bibitem[B2]{B2} D.~J.~Benson, Modular Representation theory, New trends and methods. SLNM 1081 (1984). 

\bibitem[CO]{CO} J. Comes, V. Ostrik, 
On the Deligne category $\underline{\Rep}^{\rm ab}S_d$, 
arXiv:1304.3491,  Algebra Number Theory 8 (2014), no. 2, 473--496.  

\bibitem[D1]{D1} P. Deligne, Cat\'egories tannakiennes, in : The Grothendieck Festschrift, vol. 2, Birkh\"auser P.M.
87 (1990), 111--198. 

\bibitem[D2]{D2} P. Deligne. Cat\'egories tensorielles, 
Mosc. Math. J., 2(2):227--248, 2002.

\bibitem[D3]{D3} P. Deligne,
La cat\'egorie des repr\'esentations du groupe sym\'etrique
$S_t$, lorsque $t$ nÕest pas
un entier naturel, in: Algebraic Groups and Homogeneous Spaces, in: Tata Inst.
Fund. Res. Stud. Math., Tata Inst. Fund. Res., Mumbai, 2007, 209--273. 

\bibitem[DM]{DM} P. Deligne, J. Milne, Tannakian categories, Lecture notes in Math. 900, 1981. 

\bibitem[EGO]{EGO} P. Etingof, S. Gelaki, V. Ostrik, Classification of fusion categories of dimension pq,
Int. Math. Res. Not. {\bf 2004}, no. 57, p. 3041--3056.

\bibitem[EOV]{EOV} P. Etingof, V. Ostrik, S. Venkatesh, 
Computations in symmetric fusion categories in characteristic $p$,
arXiv:1512.02309,  Int. Math. Res. Not. IMRN 2017, no. 2, p.468--489.

\bibitem[EGNO]{EGNO} P. Etingof, S. Gelaki, D. Nikshych, V. Ostrik, 
Tensor categories, AMS, 2015. 

\bibitem[G]{G} J. A. Green, On indecomposable representations of a finite group, Math. Zeitschrift, v. 70, p. 430--445, 1959. 

\bibitem[H]{H} T. Heidersdorf, On supergroups and their semisimplified representation categories, arXiv:1512.03420. 

\bibitem[Ha]{Ha} N. Harman, Deligne categories as limits in rank and characteristic,  arXiv:1601.03426. 

\bibitem[Ja]{Ja} U. Jannsen, Motives, numerical equivalence and semi-simplicity,
Inv. Math.
107 (1992), 447--452.

\bibitem[J]{J} J. C. Jantzen,
Representations of algebraic groups, 2nd edition, American Mathematical Society, Providence, RI, 2003. 

\bibitem[MPS]{MPS} S. Morrison, E. Peters and N. Snyder,
Categories generated by a trivalent vertex,
Selecta Math. (N.S.) 23 (2017) no. 2, 817--868.

\bibitem[NZ]{NZ} W. Nichols and M. B. Zoeller, A Hopf algebra freeness theorem.
Amer. J. Math. 111 (1989), 381--385.

\bibitem[O1]{Or3} V. Ostrik, Pivotal fusion categories of rank 3,
Mosc. Math. J.  {\bf 15}  (2015),  no. 2, p. 373--396.

\bibitem[O]{O} V. Ostrik,    On   symmetric   fusion   categories   in   positive   characteristic,
arXiv:1503.01492.

\bibitem[S]{S} P. O'Sullivan, The generalized Jacobson-Morozov theorem, Memoirs of the AMS, v.973, 2010. 

\bibitem[SS]{SS} J. Saxl, G. Seitz,
Subgroups of algebraic groups containing regular unipotent elements, 
Journal of the London Mathematical Society / Volume 55 / Issue 02 / April 1997, pp 370--386.

\bibitem[W]{W} R. Weissauer,    Semisimple algebraic tensor categories,
arXiv:0909.1793.
\end{thebibliography}
\end{document}